\numberwithin{equation}{section}
\def\ve{\varepsilon}
\def\phi{\varphi}
\def\R{\mathbb R}
\def\N{\mathbb N}
\def\R{\mathbb R}
\def\N{\mathbb N}
\def\H{\mathcal H}
\def\dist{\mathrm{dist}}
\def\d{\mathrm{div}}
\def\dist{\mathrm{dist}}
\def\l{\mathrm{loc}}
\newtheorem{thm}{Theorem}[section]
\newtheorem{lemm}[thm]{Lemma}
\newtheorem{cor}[thm]{Corollary}
\newtheorem{prop}[thm]{Proposition}
\theoremstyle{remark}
\newtheorem{rmk}[thm]{Remark}
\newtheorem{rmks}[thm]{Remarks}
\theoremstyle{definition}
\newtheorem{defi}[thm]{Definition}
\def\uclhome{@ucl.ac.uk}
\def\ethhome{@math.ethz.ch}
\def\imphome{@imperial.ac.uk}
\begin{document}

\title{On short time existence for the Planar Network Flow}

\author{Tom Ilmanen}
\thanks{}
\address{Tom Ilmanen: Departement Mathematik, R\"amistrasse 101, 8092
  Z\"urich, Switzerland}
\email{tom.ilmanen\ethhome}

\author{Andr\'e Neves}
\thanks{}
\address{Andr\'e Neves: University of Chicago, Department of Mathematics, Chicago IL 60637, USA /Imperial College London, Huxley Building, 180 Queen's Gate, London SW7 2RH, United Kingdom}
\email{aneves@uchicago.edu, aneves\imphome}

\author{Felix Schulze}
\thanks{}
\address{Felix Schulze: 
  Department of Mathematics, University College London, 25 Gordon St,
  London WC1E 6BT, United Kingdom}
\email{f.schulze\uclhome}




\begin{abstract}
We prove the existence of the flow by curvature of regular planar
networks starting from an initial network which is non-regular. The
proof relies on a monotonicity formula for expanding solutions
and a local regularity result for the network flow 
in the spirit of B. White's local regularity theorem for mean
curvature flow. We also show a pseudolocality theorem for mean
curvature flow in any codimension, assuming only that the initial
submanifold can be locally written as a graph with sufficiently small
Lipschitz constant.
\end{abstract}

\pagestyle{headings}

\maketitle 

\markboth{On short time existence for the Planar Network Flow} {Tom Ilmanen, Andr\'e Neves, and Felix Schulze}

\section{Introduction}
\thispagestyle{empty}
A natural generalization of the flow of smooth hypersurfaces by
 mean curvature is the flow of surface
clusters, where three hypersurfaces can meet under equal angles,
forming a liquid edge. These edges then again can meet on lower
dimensional strata. The simplest such configuration, which already includes many
aspects of the situation in higher dimensions, is the flow by
curvature of a network of curves in the plane. 

In brief, we consider a planar network to be a finite union of
embedded curves of non-zero length, which only intersect at their endpoints. 
We require that at each such point, called a multiple point, a finite
number, but at least two endpoints come together. We call a network {\it regular}
if at each multiple point three ends of segments meet, forming angles of
$2\pi/3$. Without this condition, but requiring that the segments have
mutually distinct exterior unit tangents at each multiple point, we
call such a network {\it non-regular}. A solution to the planar
network flow is a smooth family of regular, planar networks, such that
the normal component of the speed under
the evolution at every point on each segment is given by the curvature
vector of the segment at the point. For a more precise definition see
section \ref{sect:defi}.

Since the evolution by curvature of a regular network is the gradient flow of
the length functional it is natural to assume that at regular times
only triple points are present and the angles formed by the segments are balanced. This is
supported by
the fact that only the balanced configuration with three segments
meeting infinitesimally minimizes length around each multiple point,
if one allows as competitors connected networks with additional segments. 

After the pioneering work of Brakke \cite{Brakke}, whose definition
of moving varifolds includes the evolution of networks described here, and the
fundamental results on curve shortening flow of embedded closed curves by
Gage/Hamilton and Grayson \cite{GageHamilton86, Grayson89}, the first
thorough analytical treatment of the flow of networks was undertaken by
Mantegazza, Novaga and Tortorelli \cite{MNTNetworks}. Aside from
establishing  short time existence of the network flow starting
from a regular initial network, their focus is mainly on the evolution of
three arcs with three fixed endpoints, meeting at one interior triple
point. In this special setting they obtain long time existence and convergence
under certain hypotheses. In a recent preprint by Magni, Mantegazza and
Novaga \cite{MagniMantegazzaNovaga13} it is shown that these
hypotheses are actually fulfilled, provided none of the arcs contracts to
zero length. Existence and convergence
properties of the network flow in other
special configurations have been studied in
\cite{BellettiniNovaga11, HaettenschweilerDiplom07, MazzeoSaez07,
  SchnuererSchulze_2007, Linsen}. 

It is conjectured that at a singular time of the flow no tangent flow
which is a static line of higher multiplicity can develop. An immediate
consequence of this conjecture is that at any singular
time, the length of one of the segments shrinks to zero, and at least two
triple junctions collide. It thus can be expected that at the singular
time a non-regular network forms.  

In the present paper we show that starting from a non-regular initial
network, a smooth evolution of
regular networks exists. In this evolution it might happen that 
out of non-regular initial multiple points new segments are created.

\begin{thm}
  \label{thm:short time ex}
Let $\gamma_0$ be a non-regular, connected planar network with bound\-ed curvature
as in Definition \ref{def:network}. Then there exists $T>0$ and a
smooth connected solution of the planar network flow of regular networks
$(\gamma_t)_{0<t<T}$ such that $\gamma_t \rightarrow \gamma_0$ in the
varifold topology as $t \searrow 0$. Away from the non-regular
multiple points of $\gamma_0$ the convergence is in
$C^\infty_\text{loc}$. Furthermore, there exists a constant $C > 0$
such that
$$\sup_{\gamma_t}|k| \leq \frac{C}{\sqrt{t}}$$
and the length of the shortest segment of $\gamma_t$ is bounded from
below by $C^{-1} \sqrt{t}$.
\end{thm}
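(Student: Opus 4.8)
The plan is to obtain $(\gamma_t)$ as a limit of smooth flows starting from \emph{regular} networks that approximate $\gamma_0$, using the expanding solutions of the network flow as the local models at the non-regular multiple points. First I would desingularize $\gamma_0$: at each non-regular multiple point $p$, where finitely many segments meet with distinct exterior tangents, the infinitesimal picture is a cone $C_p$ given by the union of the corresponding rays. For each such cone I would take a self-expander $\Sigma_p$ of the network flow asymptotic to $C_p$, i.e. a regular network solving $\vec k = x^\perp/2$ (critical for the weighted length $\int e^{|x|^2/4}\,\mathcal{H}^1$), so that $t \mapsto \sqrt{t}\,\Sigma_p$ is an exact expanding solution; its existence asymptotic to a prescribed cone of rays is the content of a separate construction. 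I would then build approximating regular networks $\gamma_0^i$ by gluing a rescaled copy $\delta_i\,\Sigma_p$ into a small ball $B_{r_i}(p)$ around each $p$ while leaving $\gamma_0$ unchanged outside $\bigcup_p B_{r_i}(p)$, with $\delta_i, r_i \to 0$. By the short-time existence result of Mantegazza--Novaga--Tortorelli \cite{MNTNetworks}, each $\gamma_0^i$ evolves to a smooth flow $(\gamma_t^i)_{t \in [0,T_i)}$, and the whole difficulty is to produce estimates uniform in $i$ that survive the limit $i \to \infty$.

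The core of these estimates is the combination of the monotonicity formula for expanding solutions with a local regularity theorem modeled on White's. I would use the monotonicity formula, centered at the non-regular points and applied at scale $\sqrt{t}$, to show that the rescaled flows $\tfrac{1}{\sqrt t}(\gamma_t^i - p)$ have uniformly bounded weighted length (an \emph{expander entropy}) and that, at every point and all sufficiently small parabolic scales, the relevant density ratio lies below the threshold of the local regularity theorem; that is, the flow looks at small scales like a single multiplicity-one line or a single balanced triple junction rather than a higher-multiplicity or otherwise degenerate configuration. Feeding this into the White-type local regularity result then yields a curvature bound $\sup_{\gamma_t^i}|k| \le C/\sqrt t$ with $C$ independent of $i$, precisely the scale-invariant estimate the expander model predicts. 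A complementary clearing-out argument, again driven by the monotonicity, prevents any segment from shrinking faster than the expander rate and delivers the lower bound $C^{-1}\sqrt t$ on the length of the shortest segment, uniformly in $i$.

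With these uniform bounds in hand I would pass to the limit. Away from the non-regular points the estimates are classical and give $C^\infty_\text{loc}$ convergence of a subsequence of $(\gamma_t^i)$ to a smooth flow of regular networks $(\gamma_t)_{0 < t < T}$; near the non-regular points the scale-invariant curvature bound and the length lower bound provide exactly the compactness needed to continue the limit across while keeping the networks regular and connected for $t>0$. To identify the initial data I would again invoke the monotonicity formula: it controls the mass in shrinking balls around each $p$ and forces $\tfrac{1}{\sqrt t}(\gamma_t - p) \to \Sigma_p$, whence $\gamma_t \to \gamma_0$ as varifolds, with $C^\infty_\text{loc}$ convergence away from the non-regular points. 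The two displayed estimates are then inherited directly from the uniform bounds.

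The main obstacle is the local regularity theorem itself. Unlike White's setting, the admissible model objects here are not only multiplicity-one lines but also balanced triple junctions, so one must formulate the correct smallness hypothesis (an expander-type density close to that of a line or a standard triple junction) and prove that it forces interior regularity with quantitative curvature bounds, including at and near the moving junctions. Establishing this, together with the monotonicity formula for expanding solutions that makes the density quantity usable and guarantees that the blow-down at $(p,0)$ is exactly the chosen expander, is where the real work lies; the regularization, the application of \cite{MNTNetworks}, and the final compactness step are comparatively routine once these two analytic tools are in place.
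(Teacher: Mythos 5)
Your outline matches the paper's architecture: glue rescaled self-expanders into $\gamma_0$ at the non-regular points, run the Mantegazza--Novaga--Tortorelli flow from the regularized networks, prove $s$-independent Gaussian density bounds via an expander monotonicity formula, feed these into a White-type local regularity theorem for networks (whose key input is indeed the classification of low-density shrinkers as a line or the standard triod), and pass to the limit. However, there is a genuine gap at the central step: you assert that the monotonicity formula directly shows that ``at every point and all sufficiently small parabolic scales, the relevant density ratio lies below the threshold,'' but no monotone quantity does this in one stroke, and supplying the missing mechanism is most of the paper. The monotone quantity used is not an expander entropy or weighted length; it is $\int_{\gamma_t} (\beta_t+2t\theta_t)^2\rho\,d\mu$, where $\theta_t$ is the angle function and $\beta_t$ a primitive of the Liouville form $x\,dy-y\,dx$ restricted to the network. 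Its dissipation term is $\int |x^{\perp}-2t\vec k|^2\rho$, so it measures the time-averaged $L^2$-distance of the rescaled flow from being a self-expander. Crucially, $\beta_t$ only exists if the network is locally tree-like, which is why the paper must glue in \emph{tree-like} expanders and why the construction would fail for expanders creating new loops -- a constraint absent from your proposal.

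The second missing ingredient is how integral closeness to \emph{some} expander becomes a pointwise density bound. The paper needs the uniqueness of regular self-expanders in their topological class (proved via convexity of length for the metric $e^{|x|^2}\,\delta$), together with a compactness statement, to upgrade the smallness of $\int_{\tilde\gamma^s_t}|\vec k-x^{\perp}|^2$ at well-chosen times to $C^{1,\alpha}$-closeness to the \emph{specific} glued expander $\psi$; only then do the Gaussian ratios drop below $3/2+\varepsilon_0<\Theta_{S^1}$ near the junctions, and even then only on a time interval that must be extended by an open-closed continuation argument (the $T_0=\sup\{\dots\}$ contradiction in the proof of the main density theorem). Without the uniqueness/compactness step the integral estimate is compatible with the rescaled flow drifting among different expanders or degenerating, and the density bound does not follow. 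Finally, a small point: the lower bound on the shortest segment is not obtained by a separate clearing-out argument but comes out of the local regularity theorem itself (curvature bound plus balancing condition plus the density bound), though this is a matter of packaging rather than a gap.
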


\begin{rmks}i) The proof uses a local monotonicity formula, which only
  works if the network is locally tree-like, i.e. contains locally no
  loops. In the proof we glue small tree-like self-similarly
  expanding networks into $\gamma_0$ around non-regular multiple
  points. These tree-like, connected, self-similarly expanding networks always exist. Nevertheless, there are also non
  tree-like self-similarly expanding networks, which would correspond to the
  creation of new bounded regions in the complement of the network out
  of non-regular multiple points. Our proof would not work gluing in
  this type of expanders.\\[0.5ex]
ii) The proof of this result presented here yields, only with
  minor modifications, also the corresponding statement for non-regular
  initial networks with bounded curvature and fixed endpoints.\\[0.5ex]
iii) Using a relaxation scheme via the Allen-Cahn equation S\'aez \cite{Saez11}
has shown that regular, smooth solutions starting from a non-regular initial
network, which are tree-like,  and satisfy the estimates on the curvature and
the length of the shortest segment as above, are unique in their
topological class. This yields a corresponding uniqueness statement for our
constructed solutions in this case.
\end{rmks}

The method of proof relies on a monotone integral quantity, which
implies that self-similarly expanding solutions are attractive under
the flow. This monotone integral quantity has already been applied by
the second author in the setting of Lagrangian Mean Curvature Flow in
several places, see \cite{Neves07, Neves13b, Neves13a}. The second
main ingredient is a local regularity result in the spirit of White's
local regularity theorem for smooth mean curvature flow \cite{White05}. 

In the statement of the following local regularity result, $\Theta_{S^1}$ is the Gaussian density of the self-similarly
shrinking circle. Note that $\Theta_{S^1}= \sqrt{(2\pi/e)}>3/2$. The
quantity $\Theta(x,t,r)$ is the Gaussian density at scale $r>0$,
centered at the point $(x,t)$. For details and the definition of
proper flows, see section \ref{sec:reg res}.

\begin{thm}\label{thm:locreg.2}
Let $(\gamma_t)_{t\in[0,T)}$ be a
  smooth, proper and regular planar network flow in $ B_\rho(x_0)\times (t_0-\rho^2,t_0)$ which reaches the point $x_0$ at time
  $t_0 \in (0,T]$. Let $0<\varepsilon, \eta<1$. There exist $C =
  C(\varepsilon, \eta)$ such that if
\begin{equation}\label{eq:locreg.0.5}
\Theta(x,t,r) \leq \Theta_{S^1} -\varepsilon
\end{equation}
for all $(x,t) \in B_\rho(x_0)\times (t_0-\rho^2,t_0)$ and
$0<r<\eta\rho$ for some $\eta >0$, where $(1+\eta)\rho^2\leq
t_0<T$,  then
$$ |k|^2(x,t) \leq \frac{C}{\sigma^2\rho^2} $$
for $ (x,t)\in \big(\gamma_t\cap
    B_{(1-\sigma)\rho}(x_0)\big)\times (t_0-(1-\sigma)^2\rho^2,t_0)$
    and all $\sigma \in (0,1)$.
\end{thm}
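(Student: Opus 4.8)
The plan is to adapt B.~White's local regularity theorem for mean curvature flow, using Huisken's monotonicity formula for networks in place of the Euclidean one; the crucial point is that the boundary terms produced at regular triple points vanish because of the $2\pi/3$ balancing, so the weighted Gaussian integral is genuinely monotone along the flow. Since both the hypothesis \eqref{eq:locreg.0.5} and the conclusion are invariant under parabolic rescaling, I first normalise $x_0=0$, $t_0=0$, $\rho=1$. The conclusion is then equivalent, up to a universal constant, to the single estimate
$$ \sup\, f(x,t)^2\,|k|^2(x,t) \leq C, \qquad f(x,t):=\min\{1-|x|,\sqrt{-t}\}, $$
the supremum being taken over the flow in $B_1(0)\times(-1,0)$. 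I argue by contradiction: if no constant $C=C(\varepsilon,\eta)$ works, there is a sequence of admissible flows $\gamma^j$ satisfying \eqref{eq:locreg.0.5} along which $\sup f^2|k|^2\to\infty$.

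Next I run White's point-selection argument. Choose $(x_j,t_j)$ where $f^2|k|^2$ is almost maximal, set $\lambda_j:=|k|(x_j,t_j)$, and note that $\lambda_j f(x_j,t_j)\to\infty$ forces $\lambda_j\to\infty$. I parabolically rescale and recentre,
$$ \tilde\gamma^{\,j}_s \;:=\; \lambda_j\big(\gamma^{\,j}_{\,t_j+\lambda_j^{-2}s}-x_j\big), $$
so that the rescaled flows have $|k|(0,0)=1$, are proper and regular, exist on backward time intervals of length tending to $\infty$, and, by the almost-maximality of $f^2|k|^2$ together with $\lambda_j f\to\infty$, have curvature uniformly bounded on parabolic balls of radius $R$ for every fixed $R$. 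The hypothesis \eqref{eq:locreg.0.5} is scale invariant, so $\Theta\leq\Theta_{S^1}-\varepsilon$ survives at every point and every scale in the limit. Upgrading the curvature bound to bounds on all derivatives through interior estimates for the flow, including near the triple junctions, and using properness for the requisite compactness, I extract a subsequential limit $\gamma^\infty$: a smooth, proper, regular, ancient network flow on $(-\infty,0]$, with bounded curvature, $|k|(0,0)=1$, and $\Theta^\infty(x,t,r)\leq\Theta_{S^1}-\varepsilon$ for all $(x,t)$ and all $r>0$.

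Finally I apply Huisken's monotonicity to $\gamma^\infty$ to reach a contradiction. The Gaussian density ratio $r\mapsto\Theta^\infty(0,0,r)$ is monotone and bounded above by $\Theta_{S^1}-\varepsilon$, so its blow-down limit as $r\to\infty$ is a self-shrinking network of density strictly below $\Theta_{S^1}$. The classification of self-shrinking regular networks of low density then forces this shrinker to be a static line (density $1$) or a static regular triple junction (density $3/2$): the shrinking circle realises exactly $\Theta_{S^1}$, and every other shrinker has strictly larger density, so all are excluded. In either case the shrinker is static with $k\equiv0$; matching it with the density at the centre and invoking the rigidity (equality) case of the monotonicity formula forces $\gamma^\infty$ itself to be self-shrinking, hence a static line or triple junction with $|k|\equiv0$, contradicting $|k|(0,0)=1$. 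The contradiction establishes the bound with $C$ depending only on $\varepsilon$ and $\eta$.

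The main obstacle is the blow-up analysis at the triple junctions in the second step: one must show that the junctions neither degenerate nor disappear under the rescaling, that the curvature bound from point-selection propagates to full regularity up to and across the junctions, and that the limit is again a genuine regular network flow rather than a bare collection of arcs. Closely tied to this is the rigidity in the third step---ruling out an ancient low-density flow that is smooth and curved at the centre yet asymptotic to a triple junction at $-\infty$---which is where the density gap below $\Theta_{S^1}$ and the classification of low-density shrinkers must be used in full.
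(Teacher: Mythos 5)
Your overall strategy --- contradiction, point selection, curvature rescaling, and classification of low-density shrinkers --- is the same as the paper's, which follows Ecker's version of White's argument and reduces Theorem \ref{thm:locreg.2} to the scheme of Theorem \ref{thm:locreg.1}, with Lemma \ref{thm:densitybound} supplying the classification. However, two steps do not work as written. The first is your final rigidity argument, which fails in the triod case. If the blow-down of $\gamma^\infty$ at $-\infty$ is the standard triod, its Gaussian density is $3/2$, whereas the density of $\gamma^\infty$ at the origin is $1$ whenever the origin is a smooth curve point --- which is exactly the situation in which $|k|(0,0)=1$ is meaningful as you state it. The density ratio $r\mapsto\Theta^\infty(0,0,r)$ then increases from $1$ to $3/2$ and is not constant, so the equality case of the monotonicity formula centred at the origin gives nothing. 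To conclude one must first show that $\gamma^\infty$ actually contains a triple point, apply rigidity centred \emph{there} (where the density equals $3/2$ at every scale), and only then deduce that $\gamma^\infty$ is the standard triod.

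The second problem is the one you flag at the end but do not resolve: a uniform curvature bound on a network flow does not yield bounds on higher derivatives near the junctions, because the interior estimates (Proposition \ref{thm:locreg}) require a lower bound on the lengths of the segments, and under the rescaling two or more triple points may collide. The limit is therefore a priori only a $C^{1,1}$ degenerate flow in which cores and higher-multiplicity pieces can appear; the paper rules these out a posteriori using precisely the hypothesis $\Theta\leq\Theta_{S^1}-\varepsilon<2$. Even then one must upgrade the $C^{1,\alpha}$ convergence to smooth convergence --- via White's local regularity theorem when the limit is a static line, and via Theorem \ref{thm:locreg.3} when it is the standard triod --- before the normalisation $|k|(0,0)=1$ passes to the limit and produces the contradiction. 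Without that upgrade the contradiction is not established, since $C^{1,\alpha}$ convergence does not control the curvature at the origin.
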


\begin{rmks}\label{rem:locreg.2}i)
  One can furthermore show that there is a constant
  $\kappa=\kappa(\varepsilon, \eta)>0$ such that
  the length of each segment which intersects
  $B_{(1-\sigma)\rho}(x_0)\times (t_0-(1-\sigma)^2\rho^2,t_0)$ is
  bounded from below by $\kappa\cdot \sigma \rho$.
 This implies corresponding scaling invariant estimates on all higher
 derivatives of the curvature.\\[0.5ex]
ii) We also prove a corresponding result if the evolving network is
locally tree-like, i.e. does not contain any closed loops of length
less than $\delta >0$ and the Gaussian density ratios are bounded from
above by $2-\varepsilon$.\\[0.5ex]
iii) Note that the result implies that any regular smooth flow, which
is sufficiently close in measure to the static configuration consisting
of three half-lines meeting under equal angles, is smoothly
close. Recently Tonegawa and Wickramasekera
\cite{TonegawaWickramasekera14} have shown that this is also true for integer Brakke flows.
\end{rmks}

To get sufficiently good local control away from the multiple junctions
we also show the following pseudolocality theorem. Since it also holds
for mean curvature flow, we formulate it in full generality.  A similar estimate
assuming initial control
on the second fundamental form has been shown by Chen and Yin
\cite{ChenYin07} and assuming control on up to fourth derivatives by
Brendle and Huisken \cite{BrendleHuisken13}.

In the following, for any point $x \in \R^{n+k}$ we write $x = (\hat{x},
\tilde{x})$ where $\hat{x}$ is the orthogonal projection of $x$ on the
$\R^n$-factor and $\tilde{x}$ the orthogonal projection on the $\R^k$
factor.  We define the cylinder $C_R(x_0) \subset \R^{n+k}$ by
$$C_r(x) = \{x\in \R^{n+k}\, |\, |\hat{x}-\hat{x}_0|<r,
|\tilde{x}-\tilde{x}_0|<r\}\ .$$
Furthermore, we write $B_r^n(x_0) = \{(\hat{x},\tilde{x}_0)\in
\R^{n+k}\, \ \, |\hat{x}-\hat{x}_0|<r\}$.  

\begin{thm}\label{thm:graph_local} 
Let $(M^n_t)_{0\leq t < T}$ be a smooth mean curvature flow of
embedded $n$-dimensional submanifolds in $\R^{n+k}$ with area
ratios bounded by $D$. Then for any $\eta >0$, there exists $\varepsilon,
\delta >0 $, depending only on $n,k,\eta, D$, such that if $x_0 \in
M_0$ and $M_0\cap C_1(x_0)$ can be written as $\text{graph}(u)$,
where $u:B^n(x_0) \rightarrow \R^k$ with Lipschitz constant less than
$\varepsilon$, then
$$M_t\cap C_\delta(x_0)\qquad t\in [0,\delta^2)\cap [0,T)$$
is a graph over $B^n_\delta(x_0)$ with Lipschitz constant less than
$\eta$ and height bounded by $\eta \delta$.
\end{thm}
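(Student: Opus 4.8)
The plan is to argue by contradiction using a parabolic blow-up together with B. White's local regularity theorem \cite{White05}. Suppose the statement fails for some fixed $\eta>0$. Then there exist sequences $\varepsilon_i,\delta_i\searrow 0$, points $x_0^i\in M^i_0$, and smooth mean curvature flows $(M^i_t)$ with area ratios bounded by $D$, such that $M^i_0\cap C_1(x_0^i)=\text{graph}(u_i)$ with $\text{Lip}(u_i)<\varepsilon_i$, yet for some $t_i\in[0,\delta_i^2)\cap[0,T_i)$ the slice $M^i_{t_i}\cap C_{\delta_i}(x_0^i)$ fails to be a graph over $B^n_{\delta_i}(x_0^i)$ with Lipschitz constant $<\eta$ and height $<\eta\delta_i$. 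Translating so that $x_0^i=0$ and rescaling parabolically by $\delta_i^{-1}$, I set $\tilde M^i_t=\delta_i^{-1}M^i_{\delta_i^2t}$. Since the Lipschitz constant and the area ratios are scale invariant, $\tilde M^i_0\cap C_{1/\delta_i}(0)$ is a graph with Lipschitz constant $<\varepsilon_i$ vanishing at the origin, so $\tilde M^i_0\to\R^n\times\{0\}$ in $C^{0,1}_\text{loc}$ and as varifolds, while the failure now occurs at scale $1$ and at a time $t_i'=t_i/\delta_i^2\in[0,1)$.

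The heart of the argument is to promote this flatness of the initial data to uniform curvature control. First I would show that a graph with small Lipschitz constant has Gaussian density ratios close to those of its tangent plane: using the area ratio bound $D$ to control the Gaussian integral over the portion of $\tilde M^i_0$ leaving a large cylinder, and the smallness of the gradient near the center, one obtains $\Theta(x,0,r)\le 1+C\varepsilon_i$ for every center $x$ and scale $r$, with $C=C(n,k,D)$. By Huisken's monotonicity formula this bound propagates forward: for $t>0$ and $r\le\sqrt t$ one has $\Theta(x,t,r)\le\Theta(x,t,\sqrt t)=\int_{\tilde M^i_0}\rho_{(x,t)}\le 1+C\varepsilon_i$, so the density ratios stay below $1+\varepsilon_0$ for $i$ large, where $\varepsilon_0$ is the threshold in White's local regularity theorem. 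White's theorem then yields uniform curvature bounds $|A|^2\le C/\rho^2$, and in fact bounds on all higher derivatives, on compact parabolic subsets of $\R^{n+k}\times(0,1)$. Together with the area ratio bound and White's compactness theorem for such flows, a subsequence of $\tilde M^i$ converges smoothly on compact subsets of $\R^{n+k}\times(0,1)$ to a smooth limit flow $\tilde M^\infty$ with $\tilde M^\infty_0=\R^n\times\{0\}$ and density ratios $\le 1$; by the equality case of the monotonicity formula combined with White's local regularity, this limit is the static plane.

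Smooth convergence to the static plane on compact subsets of $\R^{n+k}\times(0,1)$ immediately contradicts the failure of the graphical and height bounds at scale $1$, provided the failure times $t_i'$ stay away from $0$. The main obstacle is therefore the initial layer $t_i'\to 0$, where White's curvature estimate degenerates like $1/\sqrt t$ and smooth convergence up to $t=0$ is not directly available. I expect to handle this by combining the initial graphicality with a displacement estimate: integrating the curvature bound shows that points of $\tilde M^i_t$ move by at most $\int_0^t C/\sqrt s\,ds=2C\sqrt t$, while an Ecker--Huisken type maximum principle for the graphical quantity controlling the slope shows that the Lipschitz constant cannot jump from below $\varepsilon_i$ to $\eta$ in a short time. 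This keeps $\tilde M^i_{t_i'}$ graphical with Lipschitz constant below $\eta$ and height below $\eta$ for $i$ large, yielding the desired contradiction; unwinding the rescaling then produces the claimed $\varepsilon$ and $\delta$ depending only on $n,k,\eta,D$.
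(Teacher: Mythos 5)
Your overall skeleton --- Gaussian density estimates for the initial Lipschitz graph, propagation by Huisken's monotonicity formula, and White's local regularity theorem --- matches the paper's proof, and the first half of your argument (density ratios $\leq 1+C\varepsilon$ at time zero for centres near $x_0$ and scales up to the cylinder size, hence at later times by monotonicity, hence curvature bounds and smooth convergence to the static plane away from $t=0$) is sound. The genuine gap is in your treatment of the initial layer $t_i'\to 0$. You propose to control the slope there by ``an Ecker--Huisken type maximum principle for the graphical quantity controlling the slope''. No such maximum principle is available in codimension $k>1$: the Ecker--Huisken gradient estimate rests on the evolution equation for $v=\langle\nu,e_{n+1}\rangle^{-1}$, which has no useful analogue for general higher-codimension graphs (this is exactly why graphical mean curvature flow in higher codimension requires extra structural hypotheses such as the area-decreasing condition), and the paper itself confines the Ecker--Huisken estimates to $k=1$ in Remark \ref{rmks:graph_local}~i). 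Moreover, even in codimension one a local application of that estimate presupposes the graphicality you are trying to establish, so the step as stated is also circular.

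The paper closes this gap with two devices you are missing. First, graphicality is never obtained from a slope maximum principle: one computes that every point of $C_2(x_0)$ lying outside the thin slab $\{|\tilde{y}|\leq\eta/2\}$ around the initial graph has Gaussian density ratio strictly less than $1$ at time $0$, so by monotonicity the flow can never reach such a point; confinement to this slab, together with the small curvature bound from White's theorem and unit density, forces $M_1\cap C_1$ to be a graph with Lipschitz constant below $\eta$. Second, the small-time regime is handled not by an initial-layer estimate but by scale invariance: for $t_0\in(0,1)$ one parabolically rescales by $\lambda=t_0^{-1/2}$, which turns time $t_0$ into time $1$ and turns the hypothesis into an $\varepsilon$-Lipschitz graph over an even larger ball, so the time-$1$ conclusion applies verbatim and can be scaled back. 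If you replace your displacement-plus-maximum-principle step by these two observations, the compactness argument becomes unnecessary and the proof goes through in all codimensions.
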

 
\begin{rmks}\label{rmks:graph_local}i) In codimension one
  the local estimates of Ecker and Huisken \cite{EckerHuisken91} yield
  that a local bound on the second fundamental form or higher
  derivatives thereof on $M_0\cap C_\delta(x_0)$ imply a corresponding
  bound in $M_t\cap C_{\delta/2}(x_0)$ for $t\in
  [0,\delta^2/4)\cap[0,T)$.\\[0.5ex]
ii) By localizing Huisken's monotonicity formula, see for example
\cite{Ecker04} or \cite{White97}, the result is still true for local mean
curvature flows without an assumption on the area ratios.\\[0.5ex]
iii) The proof of this result uses the local regularity theorem of
White \cite{White05}. By replacing this with Brakke's local regularity
theorem for Brakke flows \cite{Brakke}, see also \cite{Tonegawa14a, Tonegawa14b}, the above statement is still
true if one only assumes initially that $(M^n_t)_{0\leq t<T}$ is an
integer Brakke flow, provided that the flow has no
sudden mass loss in $C_1(x_0)$.
\end{rmks}

{\bf Proof outline.} The cone-like structure at the non-regular
multiple points suggests that the regular evolution out of such a point
should be close to a self-similarly expanding solution. Given such a
non-regular initial network $\gamma_0$ we glue in, around each
non-regular multiple point, a tree-like, self-similarly expanding,
regular solution at scale $s^{1/2}$ to obtain an approximating
network $\gamma_0^s$. Since the curvature of $\gamma_0^s$ is of the
scale $s^{-1/2}$ and the shortest segment of length proportional to $s^{1/2}$ we
obtain from standard short-time existence a solution $\gamma^s_t$ only
up to a time proportional to $s$. 

To show that these solutions exist for a time $T_0>0$ independent of
$s$ we use the local monotonicity formula to show that the
solutions $\gamma_t^s$ are close in an integrated sense to a
self-similarly expanding solution around each non-regular multiple
point. The uniqueness of self-similarly solutions in their
'topological class', together with a compactness argument then yields
that there are many times such that $\gamma_t^s$ is close to the
corresponding self-similarly expanding solution in $C^{1,\alpha}$
around each of the non-regular multiple points. This in turn gives
that the Gaussian density ratios on the appropriate scale are less
than $3/2+\varepsilon$. Theorem \ref{thm:locreg.2} then gives
estimates on the curvature which are independent of $s$, together with
lower bounds on the length of the shortest segment which yields
existence up to a time $T_0$, independent of $s$. Passing to the limit
$s\rightarrow 0$ we obtain the desired solution.

The proof of the local regularity result, Theorem \ref{thm:locreg.2},
follows the proof of White \cite{White05} and the alternative proof Ecker
\cite{Ecker04}. To make this proof work in the case of networks, in a
first part we show  that the only self-similarly shrinking 
networks with Gaussian density less than $\Theta_{S^1}$ are a constant 
line through the origin, or three half-lines meeting at equal angles 
at the origin. The second part is that we show that
any smooth network flow which is weakly close to three half-lines meeting at equal
angles is also smoothly close. To do this we localize the interior
integral estimates in \cite{MNTNetworks}. 

{\bf Structure of the paper.} In section \ref{sect:defi} we give
 the basic definitions. The monotonicity formula for expanding
 solutions is presented in
 section \ref{monotonicity}. In section \ref{sect:self-expanders} we
 give a proof for the uniqueness of self-expanders in their
 topological class, together with a lemma showing that networks which
 are in a weak integral sense close to the self-expander are actually
 $C^{1,\alpha}$-close. Stating the necessary conditions for an
 approximating sequence $\gamma^s_0$ we show the estimates on the
 Gaussian density ratios in section \ref{sec:main theorem}. In section
 \ref{proofs} we give the omitted proofs of some technical lemmas from
 section \ref{sec:main theorem}. Following this, we show that we can
 construct such an approximating sequence by gluing in a
 self-expander at the right scale into $\gamma_0$ in section
 \ref{sec:short time ex} and give the proof of Theorem \ref{thm:short
   time ex}.

In section \ref{sec:loc reg} we first localize the higher order 
integral estimates from \cite{MNTNetworks} and then investigate
tangent flows to the network flow and self-similarly shrinking
solutions. We complete this section by proving the local regularity
result, Theorem \ref{thm:locreg.2}.

In section \ref{sec:graph_local} we prove the pseudolocality result
Theorem \ref{thm:graph_local}.

We finish the paper with an appendix containing several helpful
technical results.

{\bf Acknowledgements:} The third author is grateful to R. Mazzeo and M. S\'aez
for many helpful discussions.

\section{Definitions and set-up}\label{sect:defi}

\begin{defi}[Regular and non-regular network]\label{def:network}
We define a {\em regular}, planar network $\gamma$ as follows.
\begin{itemize}
\item[i)] There is a finite number of points $S=\{a_i\}_{i=1}^n$ on $\gamma$
  such that $\gamma\setminus S$ is a finite union of smooth, embedded
  curves of positive length ({\em branches}).
\item [ii)] If $\sigma$ is a non-compact branch of
  $\gamma$ then it approaches a
  half-line $P$ at infinity, i.e.,
  $$\lim_{R\to\infty}\dist (\sigma\setminus B_R(0), P\setminus
  B_R(0))=0.$$
We will furthermore assume that the curvature of such a non-compact
branch is uniformly bounded.
\item[iii)] Each point in $S$, called a {\em triple point}, is the endpoint of
  three curves 
  $\{\sigma_j\}_{j=1}^3$ satisfying the following
  condition: If $T_j$ denotes the exterior unit 
  tangent vector induced by  each $\sigma_j$, then
  $$T_1+T_2+T_3=0.$$
\end{itemize}
We call a network {\em non-regular} if each point in $S$ is an
endpoint of at least two line segments $\{\sigma_j\}_{j=1}^k\, k\geq 2$, and
the induced exterior unit tangent vectors are mutually distinct
$$ T_i \neq T_j \ \ \text{for}\ i\neq j\, .$$ 
We will call such a point a non-regular multiple point.
\end{defi}

Consider a smooth family of regular, planar
networks $(\gamma_t)_{0\leq t<T}$, i.e. $\gamma_{t_2}$ is a smooth deformation of
$\gamma_{t_1}$. This implies that the number of triple points in
$S_t=\{a_i(t)\}_{i=1}^n$ stays fixed. So we can assume that there 
exists a smooth family of regular parametrizations  $(N_t)_{0\leq t<T}$
of the evolving network. We will call $(\gamma_t)_{0\leq t<T}$ a solution
to the {\em network flow} if the deformation vector  
$$\frac{dN}{dt}=X\quad\quad\mbox{satisfies}\quad\quad X^{\bot}=\vec{k}$$
at each non-singular point.  		

\begin{rmks}
  i) As a consequence, using the above notation, at each triple point
$$\sum_{j=1}^3\langle\vec k_j, J T_j\rangle=\langle X,JT_1+JT_2+JT_3\rangle=0,$$
where $J$ is the complex structure.\\[0.5ex]
  ii) Note that for a network without triple points, i.e.\! a union of
  curves, this is curve shortening flow.\\[0.5ex]
  iii) A network flow still satisfies the avoidance principle when
  comparing to smooth solutions of curve shortening flow. Comparing
  with big shrinking circles it is easy to see that any half-line $P$ at
  infinity will remain fixed under the flow.\\[0.5ex]
  iv) Note that ii) in Definition \ref{def:network} implies
  that such a network has bounded length ratios, i.e. there exists
  $D>0$ such that
$$ \H^1(\gamma \cap B_r(0)) \leq D r\ . $$
v) By the work of Mantegazza, Novaga and Tortorelli \cite{MNTNetworks,
  Mantegazza04} it is known that
 for a given smooth, regular, planar network a smooth solution to the
 network flow exists,
 at least for a short time, provided it is compact with possible fixed
 endpoints. It is shown there that the solution
 exists as long as the curvature of the evolving network stays
 bounded, and none of the lengths of the branches goes to zero. This
 statement can be easily extended to the case of regular networks with
 non-compact branches as in Definition \ref{def:network}, see the
 beginning of section \ref{sec:short time ex}. \\[0.5ex]
vi) It would also be possible to study networks with fixed or
moving endpoints. To avoid the non-conceptual, but technical
difficulties arising from the contribution of the endpoints, we do
not consider this case. 
\end{rmks}
\begin{defi}\label{def:c1alphaclose}	
Let $\chi$ be a regular network with finitely many triple points. We
say that $\chi$ is of class $C^{k,\alpha}$ where $ k\geq 1,\, 0\leq
\alpha \leq 1$ if there exists $\delta>0$ and a collection of points
$(p_i) \subset \chi$, either finitely or countably many,  such that
\begin{itemize}
\item[a)] the collection of balls $(B_{3\delta/4}(p_i))$ covers $\chi$,
\item[b)] each ball $B_\delta(p_i)$ contains at most one triple
  point. If it contains no triple point, then $B_{3\delta/4}(p_i)\cap\chi$ can
  be written as a graph over its affine tangent line at $p_i$, where
  the graph function has $C^{k,\alpha}$-norm less than one.
\item[c)] If $B_\delta(p_i)$ contains a triple point, then $p_i$ is
  the triple point, and $\chi \cap  B_\delta(p_i)$ consists of three
  curves meeting at $p_i$. Each of the curves in $B_{3\delta/4}(p_i)$
  can be written as a graph over the corresponding affine tangent
  half-line at $p_i$ where
  the graph function has $C^{k,\alpha}$-norm less than one.
\end{itemize}
We say that another regular network $\sigma$ is $\ve$-close to
$\chi$ in $C^{k,\alpha}$, if $\sigma$ is contained in the $\delta/2$-neighborhood of
$\chi$ and the triple junctions of $\sigma$ are in one to one
correspondence with the triple junctions of $\chi$, with the triple
junctions of $\sigma$ being in a $\delta/2$-neighborhood of the triple
junctions of $\chi$. Furthermore, in case b) in the above local graph
representation, $\sigma \cap B_{3\delta/4}(p_i)$ can be written as
a graph as well, where the difference of the graph functions is less
than $\ve$ in $C^{k,\alpha}$. In case c) we assume that there exist unit vectors $N_i$ such that
$\sigma \cap B_{3\delta/4}(p_i)$ can be written as $\chi + u_iN_i$,
where the $u_i$ are defined on a connected sub-domain of $B_\delta(p_i)
\cap \chi$ and continuous. Restricted to each of the three local
branches of $\chi$ we assume that the  $C^{k,\alpha}$-norm of $u_i$ is
less than $\ve$ with respect to arc-length parametrization on each branch. 
\end{defi}

\section{Monotonicity formulas}\label{monotonicity}

Let $\tilde{\theta_t}$ be the angle that the tangent vector of
$\gamma_t$ makes with the $x$-axis. 
This is  a well defined function up to multiple of $\pi$ away from the triple junction
points. Because at each of these points 
the angle jumps by $2\pi/3$, there is a well defined function
$\theta_t$ which is continuous on $\gamma_t$ and  coincides with
$\tilde{\theta_t}$ up to a multiple of $\pi/3$. An important
observation is that $\vec k=J\nabla \theta_t$, where $J$ is the 
complex structure.

Set $\lambda=xdy-ydx$. We assume that the planar network $\gamma_t$
has no loops, so we can define $\beta_t$ to be such that $$d\beta_t=\lambda_{\gamma_t}.$$
Note that $\beta_t$ is Lipschitz because its gradient is 
bounded linearly and thus $\beta_t$ grows at most quadratically. 

Finally, for any $x_0\in \R^2$  and $t_0$, define for $t<t_0$ the
backwards heat kernel centered at $(x_0,t_0)$:
\begin{equation}\label{eq:def.0}
\rho_{x_0,t_0}(x,t)=\frac{1}{\sqrt{4\pi(t_0-t)}}e^{-\frac{|x-x_0|^2}{4(t_0-t)}}\, .
\end{equation}

\begin{lemm} The following evolution equations hold away from the triple junction points:
  \begin{align}
  \tag{i}    \frac{d\theta_t}{dt} &=\Delta\theta_t+\langle\nabla 
    \theta_t, X \rangle;\\
  \tag{ii}
    \frac{d\beta_t}{dt} &=\Delta
    \beta_t+\langle\nabla \beta_t, X \rangle-2\theta_t;\\
  \tag{iii} \begin{split}
    \frac{d\rho_{x_0,t_0}}{dt} &=
    -\Delta\rho_{x_0,t_0}+\langle\nabla
    \rho_{x_0,t_0}, X \rangle\\ 
     &\qquad \qquad -\left|\vec
      k+\frac{(x-x_0)^{\bot}}{2(t_0-t)}
    \right|^2\rho_{x_0,t_0}+|\vec k|^2\rho_{x_0,t_0}.
    \end{split}
 \end{align}
\end{lemm}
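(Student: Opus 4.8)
The plan is to derive three evolution equations for geometric quantities on the flowing network, working away from the triple junctions where everything is smooth. The three quantities are the tangent-angle function $\theta_t$, the primitive $\beta_t$ of the one-form $\lambda$ restricted to the network, and the backwards heat kernel $\rho_{x_0,t_0}$. The key structural facts I would use repeatedly are the curvature identity $\vec k = J\nabla\theta_t$ recorded just above the lemma, the flow condition $X^\bot = \vec k$, and the fact that along the curve the full Laplacian of a scalar is the second arc-length derivative (since each branch is one-dimensional). I would also use that on a one-dimensional flow the time derivative along the moving network decomposes as $d/dt = \partial_t + \langle X, \nabla\cdot\rangle$ when acting on a quantity pulled back through the parametrization, so the material derivative differs from the spatial evolution by the transport term $\langle\nabla(\cdot), X\rangle$ that appears in every line.

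For (i), I would start from $\vec k = J\nabla\theta_t$ and recall the well-known evolution of the tangent angle under curve shortening flow, $\partial_t\theta = \Delta\theta$ for the purely normal motion, then add back the transport term generated by the tangential part of $X$; concretely, differentiating $\theta$ along the flow and using that the normal speed is $\vec k = J\nabla\theta_t$ produces the reaction-diffusion form $\Delta\theta_t + \langle\nabla\theta_t, X\rangle$. For (iii), the plan is the standard Huisken-type computation: differentiate the explicit Gaussian $\rho_{x_0,t_0}$ in $t$, convert the resulting spatial derivatives into $-\Delta\rho + \langle\nabla\rho, X\rangle$ plus the curvature correction, and complete the square to isolate the manifestly nonpositive term $-|\vec k + (x-x_0)^\bot/(2(t_0-t))|^2\rho$ together with the extra $+|\vec k|^2\rho$; this is a direct calculation using $\partial_t|x-x_0|^2 = 2\langle x-x_0, X\rangle$ and the one-dimensional heat-kernel identity.

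The step I expect to require the most care is (ii), the evolution of $\beta_t$, because $\beta_t$ is only defined through the primitive relation $d\beta_t = \lambda_{\gamma_t}$ with $\lambda = x\,dy - y\,dx$, so it is defined up to a constant and its meaning is tied to the geometry of the (loop-free) network rather than to a pointwise formula. My approach would be to compute $\partial_t\beta_t$ by differentiating the defining relation and commuting $d/dt$ with the exterior derivative along the moving curve, using $\lambda$'s structure to pick up the anomalous $-2\theta_t$ term: since $d\lambda = 2\,dx\wedge dy$ is twice the area form, the time variation of the primitive of $\lambda$ along a curve moving with normal speed $\vec k = J\nabla\theta_t$ generates precisely a $-2\theta_t$ contribution after integrating the swept area against the angle. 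Writing $\beta_t$ via its arc-length primitive and differentiating, I would match the diffusion part $\Delta\beta_t$, the transport part $\langle\nabla\beta_t, X\rangle$, and then carefully track the remaining term to confirm it equals $-2\theta_t$ rather than a boundary-type contribution; the loop-free hypothesis is what guarantees $\beta_t$ is globally well-defined so that this pointwise identity makes sense on each branch.
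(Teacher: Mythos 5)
Your proposal is correct and follows essentially the same route as the paper: parts (i) and (iii) are the standard computations, and for (ii) the paper likewise differentiates the defining relation $d\beta_t=\lambda_{\gamma_t}$ in time via Cartan's formula, $\mathcal{L}_X\lambda=d(X\lrcorner\lambda)+X\lrcorner d\lambda$ with $(X\lrcorner d\lambda)_{\gamma_t}=(J\nabla\theta_t\lrcorner d\lambda)_{\gamma_t}=-2d\theta_t$, and then converts $\langle X,Jx\rangle=\Delta\beta_t+\langle X,\nabla\beta_t\rangle$. The one detail to make explicit is that this argument only gives $d(d\beta_t/dt+2\theta_t-X\lrcorner\lambda)=0$, so the pointwise identity requires absorbing a time-dependent constant into the choice of the family of primitives $\beta_t$, which is legitimate precisely because the network is loop-free and each $\gamma_t$ is connected there.
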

\begin{proof}
  The derivation of these equations proceeds as in the
  smooth case except that now we have a tangential
  motion that needs to be taken into account. For
  this reason we will only show the second formula.

  The family of functions $\beta_t$ can be  chosen so
  that its time derivative is continuous. Then, denoting
  by $\mathcal{L}_{X}$ the Lie derivative  in the $X$
  direction, we obtain from Cartan's formula
  $$d(d\beta_t/dt)=\mathcal{L}_{X}\lambda=d(X\lrcorner\lambda)+
  X\lrcorner d\lambda=d(X\lrcorner\lambda)-2d\theta_t,$$
  where in the last equality we use the fact that 
  $$(X\lrcorner d\lambda)_{\gamma_t}=(X^{\bot}\lrcorner
  d\lambda)_{\gamma_t}
  =(J\nabla \theta_t\lrcorner d\lambda)_{\gamma_t}=-2d\theta_t.$$
  Therefore $$d(d\beta_t/dt+2\theta_t-X\lrcorner\lambda)=0.$$
  Note that the function which has differential zero is
  continuous on $\gamma_t$ and so we can add a time
  dependent constant to each $\beta_t$ to obtain that
  $$\displaystyle \frac{d\beta_t}{dt}=\langle X,Jx\rangle-2\theta_t.$$
  The desired formula follows from 
  $\langle X,Jx\rangle=\Delta \beta_t+\langle X,\nabla \beta_t\rangle$.
  \end{proof}

\begin{lemm}\label{evol} Let $f$ be in $C^2(\R)$. The following identities hold:\\[-1ex]
  \begin{align}
  \tag{i} 
    \begin{split}\frac{d}{dt}\int_{\gamma_t} f(\theta_t) 
      \rho_{x_0,t_0}\, d\mu &=-\int_{\gamma_t}f''(\theta_t)|\vec
      k|^2\rho_{x_0,t_0}\, d\mu\\
      & -\int_{\gamma_t}f(\theta_t)\left|\vec
        k+\frac{(x-x_0)^{\bot}}{2(t_0-t)}\right|^2
      \rho_{x_0,t_0}\, d\mu.
    \end{split}
     \end{align}
\begin{align}
  \tag{ii}
    \begin{split}
      \frac{d}{dt}\int_{\gamma_t}
      f(\beta_t+2t\theta_t)&\rho_{x_0,t_0}\,  
      d\mu =\\& -\int_{\gamma_t}f''(\beta_t+2t\theta_t)|x^{\bot}-2t\vec
      k|^2\rho_{x_0,t_0}\, d\mu\\
      & -\int_{\gamma_t}f(\beta_t+2t\theta_t)\left|\vec
        k+\frac{(x-x_0)^{\bot}}
        {2(t_0-t)}\right|^2\rho_{x_0,t_0}\, d\mu.
    \end{split}
  \end{align}
\end{lemm}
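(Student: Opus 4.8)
The plan is to reduce both identities to the first variation of length along the flow combined with the evolution equations of the preceding lemma, with the entire difficulty concentrated in the triple--junction boundary terms; I treat (i) in detail and obtain (ii) from a single algebraic simplification. For a function $\phi$ on $\gamma_t$ with material derivative $d\phi/dt$ one has
\[
\frac{d}{dt}\int_{\gamma_t}\phi\,d\mu=\int_{\gamma_t}\Big(\frac{d\phi}{dt}+\phi\,\d_{\gamma_t}X\Big)\,d\mu,\qquad \d_{\gamma_t}X=\partial_s\langle X,\tau\rangle-|\vec k|^2,
\]
where $\tau$ is the unit tangent and I have used $X^\perp=\vec k$; since the branches are parametrized over fixed intervals there is no boundary contribution at this stage. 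Taking $\phi=f(\theta_t)\rho_{x_0,t_0}$ and inserting equations (i) and (iii) of the preceding lemma, the two terms $\pm f|\vec k|^2\rho$ cancel, leaving a second order part $\int(f'\Delta\theta_t\,\rho-f\Delta\rho)$, a first order tangential part, and the already correct term $-\int f\,|\vec k+(x-x_0)^\perp/2(t_0-t)|^2\rho$.

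For the second order part I would write $\Delta(f(\theta_t))=f''(\theta_t)|\nabla\theta_t|^2+f'(\theta_t)\Delta\theta_t$ and use $|\nabla\theta_t|^2=|\vec k|^2$ (from $\vec k=J\nabla\theta_t$) to get $f'\Delta\theta_t\,\rho=\rho\,\Delta(f(\theta_t))-f''|\vec k|^2\rho$; this produces exactly the claimed term $-\int f''|\vec k|^2\rho$ and reduces the rest to the Green expression $\int_{\gamma_t}(\rho\,\Delta F-F\,\Delta\rho)$ with $F=f(\theta_t)$. The first order contributions, coming from $f'\langle\nabla\theta_t,X\rangle\rho$, $f\langle\nabla\rho,X\rangle$ and $f\rho\,\partial_s\langle X,\tau\rangle$, assemble into the single total arc--length derivative $\partial_s(F\rho\,\langle X,\tau\rangle)$.

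The heart of the argument, and the step I expect to be the main obstacle, is to show that $\int_{\gamma_t}\partial_s(\rho\,\partial_s F-F\,\partial_s\rho)$ and $\int_{\gamma_t}\partial_s(F\rho\langle X,\tau\rangle)$ both vanish. Integrating each total derivative branch by branch yields, at every triple point $p$, a sum over the three incident branches with outward unit tangents $T_1,T_2,T_3$; here the continuity of $\theta_t$ (hence of $F$, and of the ambient function $\rho$) across $p$ is crucial. The first order boundary term is $-F(p)\rho(p)\sum_j\langle X,T_j\rangle$, which vanishes by $\sum_jT_j=0$. In the Green term the piece $F(p)\langle D\rho(p),\sum_jT_j\rangle$ again vanishes by $\sum_jT_j=0$, while the remaining piece is $-\rho(p)f'(\theta(p))\sum_j\partial_s\theta_t|_{p,j}=-\rho(p)f'(\theta(p))\sum_j\langle\vec k_j,JT_j\rangle=0$ by the curvature balancing at a triple point. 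The contributions at the ends of the non--compact branches vanish by the Gaussian decay of $\rho$ and its derivatives together with the assumed bound on the curvature there. This proves (i).

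Finally, (ii) follows once one observes that $\psi:=\beta_t+2t\theta_t$ obeys the \emph{same} evolution equation as $\theta_t$: combining (i) and (ii) of the preceding lemma,
\[
\frac{d\psi}{dt}=\Delta\psi+\langle\nabla\psi,X\rangle,
\]
the inhomogeneous $-2\theta_t$ being cancelled by the explicit $+2\theta_t$, and $\psi$ is again continuous at the junctions. Thus the argument for (i) applies verbatim with $\psi$ in place of $\theta_t$, the only change being the factor replacing $|\nabla\theta_t|^2$. Computing $\partial_s\beta_t=\langle Jx,\tau\rangle=-\langle x,\nu\rangle$ and $\partial_s\theta_t=\langle\vec k,\nu\rangle$ gives $\partial_s\psi=-(\langle x,\nu\rangle-2t\langle\vec k,\nu\rangle)$, whence $|\nabla\psi|^2=|x^\perp-2t\vec k|^2$, exactly the factor in (ii). The junction cancellations are identical, now using $\sum_j\partial_s\psi|_{p,j}=\langle Jx(p),\sum_jT_j\rangle+2t\sum_j\langle\vec k_j,JT_j\rangle=0$.
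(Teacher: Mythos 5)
Your proof is correct and follows essentially the same route as the paper: insert the evolution equations, integrate by parts on each branch, and kill the boundary terms at the triple junctions using $T_1+T_2+T_3=0$ together with the balancing $\sum_j\langle \vec k_j, JT_j\rangle=0$ (the paper carries this out for (ii) via Green's formulas and the divergence term $\d(X^{\top}f\rho)$, which is exactly your total derivative $\partial_s(F\rho\langle X,\tau\rangle)$). The only cosmetic difference is that you detail (i) and deduce (ii) from the observation that $\beta_t+2t\theta_t$ satisfies the same homogeneous equation, while the paper writes out (ii) directly.
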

\begin{proof}
  We prove the second identity. Set $\alpha_t=\beta_t+2t\theta_t$. Then
  $$\frac{df(\alpha_t)}{dt}=\Delta f(\alpha_t)-f''(\alpha_t)|\nabla 
  \alpha_t|^2+\langle\nabla f(\alpha_t),X\rangle$$ 
  and so
  \begin{multline*}
    \frac{d}{dt}\int_{\gamma_t}
    f(\alpha_t)\rho_{x_0,t_0}\, d\mu=\int_{\gamma_t}
    \rho_{x_0,t_0}\Delta f(\alpha_t)-f(\alpha_t)\Delta
    \rho_{x_0,t_0}\, d\mu\\
    +\int_{\gamma_t}\d (X^{\top} f(\alpha_t)\rho_{x_0,t_0})d\mu 
    -\int_{\gamma_t}f''(\alpha_t)|x^{\bot}-2t\vec k|^2\rho_{x_0,t_0}\,
    d\mu\\
    -\int_{\gamma_t}f(\alpha_t)\left|\vec k+\frac{(x-x_0)^{\bot}}
      {2(t_0-t)}\right|^2\rho_{x_0,t_0}d\mu.
  \end{multline*}
  We need to show that the first two integral terms vanish. Decompose
  $\gamma_t$ into $k$ smooth curves
  $\{\sigma_j\}_{j=1}^k$. Then, we obtain from Green's formulas
  \begin{multline*}
    \int_{\gamma_t}\rho_{x_0,t_0}\Delta f(\alpha_t)-f(\alpha_t)\Delta \rho_{x_0,t_0}\,d\mu\\
    =\sum_{j=1}^k \int_{\partial \sigma_j} \rho_{x_0,t_0} \langle \nabla
    f(\alpha_t), 
    \nu\rangle-f(\alpha_t)\langle\nabla \rho_{x_0,t_0}, \nu\rangle\, d\mu,
  \end{multline*}
  where $\nu$ is the exterior unit normal to each $\sigma_j$. It is  
  straightforward to see that if $\sigma_j$ is non-compact then the 
  boundary term at ``infinity'' vanishes. Let  $x_1$ be a triple junction
  point meeting three line segments, which we relabel as $\sigma_1,
  \sigma_2,$ and $\sigma_3$.  Then, at the point $x_1$
  \begin{equation*}\begin{split}
  \sum_{i=1}^3 \langle \nabla f(\alpha_t), \nu_i\rangle &=\sum_{i=1}^3
  f^\prime\langle Jx_1-2tJ\vec k_i, \nu_i\rangle\\ &=
  f^\prime\langle J x_1-2tJ X, \nu_1+\nu_2+\nu_3\rangle=0.
  \end{split}
  \end{equation*}
  The same argument shows that
  $$
  \sum_{i=1}^3 \langle \nabla \rho_{x_0,t_0}, \nu_i\rangle=0
  $$
  and so $$\int_{\gamma_t}\rho_{x_0,t_0}\Delta
  f(\alpha_t)-f(\alpha_t)\Delta \rho_{x_0,t_0}\, d\mu=0.$$
  For the second integral term we use again the decomposition
  $$\int_{\gamma_t}\d (X^{\top} f(\alpha_t)\rho_{x_0,t_0})\,d\mu
  =\sum_{j=1}^k\int_{\sigma_j}\d (X^{\top} f(\alpha_t)\rho_{x_0,t_0})\,d\mu$$
  and one can argue as before to conclude that 
  $$\int_{\gamma_t}\d (X^{\top} f(\alpha_t)\rho_{x_0,t_0})\,d\mu=0.$$
\end{proof}

In the applications later, the evolving network will only be {\it
  locally} tree-like, i.e.\! only locally without loops. To apply the above
monotonicity formula we have to localize it. We assume that
$(\gamma_t)_{0\leq t<T}$ is a smooth solution to the network flow such
that $\gamma_t\cap B_4$ does not contain any closed loop for all
$0\leq t<T$. As before, we define $\beta$ locally on $\gamma_t\cap
B_4$.

Let $\varphi$ be a smooth cut-off function such that $\varphi = 1 $ on
$B_2$, $\varphi = 0$ on $\R^2\setminus B_3$ and $0\leq
\varphi\leq 1$.

\begin{lemm}\label{thm:evol.local} The following estimate holds:
\begin{equation*}\begin{split}\frac{d}{dt}\int_{\gamma_t}\varphi
|\beta_t+2t\theta_t|^2\rho_{x_0,t_0}\, d\mu &\leq -
\int_{\gamma_t} \varphi |x^\perp - 2t\vec{k}|^2
\rho_{x_0,t_0} d\mu \\
&\ \ \  + C \int_{\gamma_t\cap(B_3\setminus B_2)}
|\beta_t+2t\theta_t|^2\rho_{x_0,t_0}\, d\mu\, .
\end{split}\end{equation*} 
\end{lemm}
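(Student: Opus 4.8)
The plan is to redo the computation of Lemma~\ref{evol}(ii) with $f(s)=s^2$, but to carry the spatial cut-off $\varphi$ through every step and keep track of the error terms it creates. Throughout write $\alpha_t=\beta_t+2t\theta_t$ and $\rho=\rho_{x_0,t_0}$, let $\nabla$ denote the tangential gradient along $\gamma_t$ and $D\varphi$ the ambient Euclidean gradient of $\varphi$ (so $\nabla\varphi=(D\varphi)^\top$). Using the first-variation identity $\frac{d}{dt}\,d\mu=(\d(X^\top)-|\vec{k}|^2)\,d\mu$ together with the pointwise evolution equations (i)--(iii), one obtains that $\frac{d}{dt}\int_{\gamma_t}\varphi\,\alpha_t^2\rho\,d\mu$ is the sum of the two non-positive bulk terms $-2\int\varphi|\nabla\alpha_t|^2\rho$ and $-\int\varphi\,\alpha_t^2|\vec{k}+\frac{(x-x_0)^\perp}{2(t_0-t)}|^2\rho$ (these already have the right sign, since $|\nabla\alpha_t|=|x^\perp-2t\vec{k}|$), plus three error terms, each supported where $D\varphi\neq 0$, i.e. in $B_3\setminus B_2$: the material derivative $(A)=\int\langle D\varphi,X\rangle\alpha_t^2\rho$ of the cut-off, the Green leftover $(B)=\int_{\gamma_t}\varphi(\rho\Delta f-f\Delta\rho)$, and the localized divergence term $(C)=\int_{\gamma_t}\varphi\,\d(X^\top f\rho)$. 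I would immediately discard the second bulk term, which is already $\le 0$.

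First I would simplify $(A)$ and $(C)$. Integrating $(C)$ by parts along each branch, the triple-point boundary contributions vanish because $\sum_i\langle X^\top,\nu_i\rangle=\langle X,\sum_iT_i\rangle=0$ (with $\nu_i=T_i$ the exterior unit tangents, which sum to zero), leaving $(C)=-\int\langle\nabla\varphi,X^\top\rangle\alpha_t^2\rho$. Adding this to $(A)$ and using $X=X^\top+\vec{k}$, the tangential velocity cancels and one is left with the single curvature term $(A)+(C)=\int_{\gamma_t}\langle D\varphi,\vec{k}\rangle\alpha_t^2\rho\,d\mu$. For $(B)$, Green's identity on each branch together with the relations $\sum_i\langle\nabla f,\nu_i\rangle=0=\sum_i\langle\nabla\rho,\nu_i\rangle$ at triple points (exactly as in the proof of Lemma~\ref{evol}) kills the boundary terms and yields $(B)=-2\int\alpha_t\rho\langle\nabla\varphi,\nabla\alpha_t\rangle+\int\alpha_t^2\langle\nabla\varphi,\nabla\rho\rangle$.

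The delicate term is the last one, $T_B:=\int\alpha_t^2\langle\nabla\varphi,\nabla\rho\rangle$: since $\nabla\rho=-\frac{(x-x_0)^\top}{2(t_0-t)}\rho$ it carries the factor $|x-x_0|/(t_0-t)$, which cannot be bounded pointwise by a multiple of $\rho$ as $t\nearrow t_0$, so $T_B$ cannot be absorbed into the stated error by itself. The main point of the argument is that $T_B$ is cancelled once the curvature term $(A)+(C)$ is integrated by parts in arc length. Writing $\vec{k}=\partial_sT$ and integrating by parts, the triple-point boundary terms again vanish (they equal $\alpha_t^2\rho\,\langle D\varphi,\sum_iT_i\rangle=0$, and $D\varphi$ vanishes near infinity), and one finds $(A)+(C)=-\int D^2\varphi(T,T)\alpha_t^2\rho-2\int\alpha_t\rho\langle\nabla\varphi,\nabla\alpha_t\rangle-T_B$. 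Hence $(A)+(B)+(C)$ no longer contains $T_B$; it reduces to $-\int D^2\varphi(T,T)\alpha_t^2\rho-4\int\alpha_t\rho\langle\nabla\varphi,\nabla\alpha_t\rangle$. I expect this cancellation of the singular term to be the crux of the proof.

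It then remains to control what survives, using the first bulk term $-2\int\varphi|\nabla\alpha_t|^2\rho$. By Young's inequality $4|\alpha_t||\nabla\varphi||\nabla\alpha_t|\rho\le\varphi|\nabla\alpha_t|^2\rho+4\frac{|\nabla\varphi|^2}{\varphi}\alpha_t^2\rho$, and choosing $\varphi$ so that $|\nabla\varphi|^2/\varphi$ is bounded (for instance $\varphi=\psi^2$ with $\psi$ smooth), the cross term is absorbed into the bulk term, leaving precisely $-\int\varphi|\nabla\alpha_t|^2\rho=-\int\varphi|x^\perp-2t\vec{k}|^2\rho$. The leftover contributions $\int(4\frac{|\nabla\varphi|^2}{\varphi}+|D^2\varphi|)\alpha_t^2\rho$ are supported in $B_3\setminus B_2$, where the bracket is bounded by a constant $C$; this produces the asserted error term $C\int_{\gamma_t\cap(B_3\setminus B_2)}|\beta_t+2t\theta_t|^2\rho\,d\mu$ and completes the proof.
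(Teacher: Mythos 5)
Your proof is correct, and every step checks out: the decomposition into the two good bulk terms plus $(A)$, $(B)$, $(C)$ is exactly what the computation of Lemma~\ref{evol}(ii) produces once the cut-off is inserted, the triple-point boundary terms vanish for the reasons you give, and the cancellation of $T_B=\int\alpha_t^2\langle\nabla\varphi,\nabla\rho\rangle$ against the by-parts of $\int\langle D\varphi,\vec{k}\rangle\alpha_t^2\rho$ is both necessary (as you correctly observe, $|\nabla\rho|/\rho=|(x-x_0)^\top|/(2(t_0-t))$ is not uniformly bounded on the annulus as $t\nearrow t_0$) and valid. The paper reaches the same estimate by a different organization of the same integrations by parts: it first computes the pointwise identity $\bigl(\tfrac{d}{dt}-\Delta_{\gamma_t}\bigr)\varphi=-\Delta_{\R^2}\varphi+D^2\varphi(\nu,\nu)+\langle\nabla\varphi,X^T\rangle$ --- in which the curvature term $\langle D\varphi,\vec{k}\rangle$ cancels between the material derivative and the intrinsic Laplacian --- then forms the parabolic inequality for the full product $\varphi\alpha_t^2$, absorbs the cross term by the same Young inequality, and only then integrates against $\rho$; at that stage the Green and divergence identities are applied to the product $g=\varphi\alpha_t^2$, whose triple-point boundary terms all vanish, so $\int(\rho\Delta g-g\Delta\rho)=0$ exactly and the singular term $T_B$ never surfaces. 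Your route buys an explicit view of where the dangerous term lives and why it must cancel (the cancellation you perform by hand is precisely the dual of the disappearance of $\langle D\varphi,\vec{k}\rangle$ in the paper's heat-operator identity for $\varphi$), at the cost of an extra integration by parts; the paper's route is shorter but hides this point inside the pointwise computation. Both rest on the same balancing conditions $\sum_i T_i=0$, $\sum_i\langle\nabla f,\nu_i\rangle=\sum_i\langle\nabla\rho,\nu_i\rangle=0$ at the triple junctions, and both require the standard choice of $\varphi$ with $|\nabla\varphi|^2/\varphi$ bounded, which you note and the paper uses tacitly.
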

\begin{proof}
  We have
\begin{equation*}\begin{split}
\Big(\frac{d}{dt}-\Delta_{\gamma_t}\Big) \varphi &= -
\Delta_{\R^2}\varphi + D^2\varphi(\nu,\nu) + \langle \nabla \varphi,
X^T \rangle\\
&\leq C \chi_{B_3\setminus B_2} +  \langle \nabla \varphi,
X^T \rangle\, .
\end{split}
\end{equation*}
As in the proof of Lemma \ref{evol} we set $\alpha_t =
\beta_t+2t\theta_t$. Then
\begin{equation*}
  \begin{split}
    \Big(\frac{d}{dt}-\Delta\Big) \varphi \alpha_t^2&\leq
    -2\langle \nabla \varphi, \nabla \alpha_t^2\rangle - 2 \varphi
    |\nabla \alpha_t|^2 + \langle \nabla (\varphi \alpha_t^2),
    X^T\rangle \\
    &\quad\ + C\chi_{B_3\setminus B_2}\alpha_t^2\\
    &\leq - \varphi
    |\nabla \alpha_t|^2 + \langle \nabla (\varphi \alpha_t^2),
    X^T\rangle + C\chi_{B_3\setminus B_2}\alpha_t^2\ ,
  \end{split}
\end{equation*}
where we estimated
\begin{equation*}
  \begin{split}
|\langle \nabla \varphi, \nabla \alpha_t^2\rangle| &\leq 2 |D\varphi|
|\alpha_t| |\nabla \alpha_t| \leq 
4\frac{|D\varphi|^2}{\varphi} \alpha_t^2+ \frac{1}{2} \varphi
|\nabla\alpha_t|^2\\
&\leq C\chi_{B_3\setminus B_2} \alpha_t^2+ \frac{1}{2} \varphi
|\nabla\alpha_t|^2\, .
 \end{split}
\end{equation*}
The rest follows as in the proof of the previous lemma.
\end{proof}

\section{Uniqueness of self-expanders}\label{sect:self-expanders}

Consider the negatively curved metric 
$$g=\exp({|x|^2})(dx_1^2+dx_2^2).$$
A network $\psi$ is said to be a {\em geodesic} for $g$ if, when
parametrized  proportionally to arc-length, is  a critical point for
the  length functional when restricted to variations with compact support. The network is said to be a {\em self-expander} if
$\vec{k}=\psi^{\bot}$ on each branch and we say that the self-expander is
{\em regular} if it has only triple junctions and the angles at each triple
junction are $2\pi/3$.

Given a function $u$ or a curve $\psi$, we denote by $u', u'', \psi',$ and $\psi''$, the correspondent derivatives with respect to the space parameter.
 
In this section we show that regular self-expanders are unique in their topological class.
 
\begin{lemm}[Ilmanen and White] 
  A  network $\psi$  is a regular self-expander  if and only if it is a geodesic for $g$.
\end{lemm}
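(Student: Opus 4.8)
The plan is to identify $g$-geodesics with critical points of a weighted Euclidean length and then read off both the branch equation and the junction condition from a single first-variation computation. Since $g=e^{|x|^2}\delta$ is conformal to the Euclidean metric with conformal factor $e^{|x|^2/2}$, the $g$-length of a network $\psi$ is the weighted length
$$L_g(\psi)=\int_\psi e^{|x|^2/2}\, d\mathcal{H}^1,$$
and $\psi$ is a geodesic precisely when it is stationary for $L_g$ under all compactly supported variations. (The requirement that $\psi$ be parametrized proportionally to arc-length only normalizes the parametrization and plays no role in this variational characterization, since length is parametrization-invariant.)

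First I would compute the first variation of $L_g$ along a variation field $V$ of compact support, treating each smooth branch separately and integrating by parts. Writing $f=e^{|x|^2/2}$ and noting $\nabla f=f\,x$, so that the normal part is $(\nabla f)^{\bot}=f\,x^{\bot}$, the interior contribution of each branch is
$$\int_\psi f\,\langle x^{\bot}-\vec k,\,V\rangle\, d\mathcal{H}^1,$$
while the integration by parts produces boundary terms of the form $f\langle V,T\rangle$ at the endpoints of each branch. Testing with $V$ supported in the interior of a single branch forces $\vec k=x^{\bot}=\psi^{\bot}$ there, which is exactly the self-expander equation; conversely, that equation makes the interior integral vanish for every $V$.

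The junction condition comes from the boundary terms. At a multiple point where branches $\sigma_1,\dots,\sigma_m$ meet, all the boundary terms are evaluated at the same point, so $f$ and $V$ are common factors and the total junction contribution is $f\,\langle V,\sum_j T_j\rangle$, where the $T_j$ are the exterior unit tangents. Stationarity for arbitrary $V$ at the junction is therefore equivalent to the balancing condition $\sum_j T_j=0$. At a triple junction this holds if and only if the three exterior unit tangents make mutual angles $2\pi/3$, which is precisely the regularity condition. Combining the interior analysis (the expander equation on each branch) with the junction analysis (the $2\pi/3$ balancing) yields the claimed equivalence between geodesics for $g$ and regular self-expanders.

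The step requiring the most care is the bookkeeping of these junction boundary terms: fixing a consistent orientation convention for the exterior tangents $T_j$, and verifying that the balancing identity $\sum_j T_j=0$ at a triple point is equivalent to the statement that the tangents meet at equal angles $2\pi/3$. I would also note that the compact support of $V$ removes any boundary contribution "at infinity" arising from the non-compact branches, so no decay hypothesis on the variation is needed there.
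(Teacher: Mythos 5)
Your proposal is correct and follows essentially the same route as the paper: both compute the first variation of the weighted length $\int_\psi e^{|x|^2/2}\,d\mathcal H^1$, obtain the interior term $\int f\langle x^{\bot}-\vec k,V\rangle\,d\mathcal H^1$ plus junction boundary terms $f\langle V,T_j\rangle$, and deduce the expander equation from interior variations and the balancing condition $T_1+T_2+T_3=0$ from variations supported near a junction. The only (shared) implicit convention is that the variations preserve the combinatorial structure of the network, so that at a triple junction the balancing identity is indeed equivalent to the $2\pi/3$ angle condition.
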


\begin{proof}
  Let $(\psi_s)_{0\leq s\leq \varepsilon}$ be a compactly supported
  continuous deformation of $\psi$ which is a $C^1$ deformation when
  restricted to each branch. Each network $\psi_s$ has only triple
  junctions. If we set
  $$X=\frac{d\psi_s}{ds}\quad\mbox{and}\quad
  T=\frac{\psi_s'}{|\psi_s'|}$$ then, assuming parametrization
  proportional  to arc-length, we have  for  each branch
  \begin{multline}\label{first-derivative}
    \frac{d}{ds}\int_a^b (g(\psi'_s,\psi'_s))^{1/2}dt  =
    \frac{d}{ds}\int_a^b \exp({|x|^2}/2)|\psi'_s|dt \\
    =\left(\langle T, X\rangle
      \exp({|x|^2}/2)\right]_a^b+\int_a^b\langle x^{\bot}-\vec{k},X
    \rangle|\psi'_s|\exp(|x|^2/2)dt.
  \end{multline}
  If $\psi$ is a geodesic then by choosing variations $X$ compactly supported on each branch we obtain that indeed $ x^{\bot}=\vec{k}$ on each branch. Choosing variations supported on a neighborhood of each triple junction we obtain that 
  $$T_1+T_2+T_3=0,$$
  where $T_1, T_2, T_3$ denote the outward unit tangent vectors at each triple junction and so $\psi$ is a regular self-expander. Likewise, if $\psi$ is a regular self-expander, it is simple to see that it is a critical point for the length functional.
\end{proof}

\begin{defi}\label{def:asymptotic}
  We say that a self-expander $\psi$ has an end asymptotic to a
  half-line $L=\{xe^{i\alpha}\,|\,x\geq 0\}$ if, for $R$ large enough,
  a connected component $\bar \psi$ of $\psi\setminus B_R$ can be
  parametrized as
  $$\bar \psi= \{xe^{i\alpha}+u(x)e^{i(\alpha+\pi/2)}\,|\, \mbox{ for
    all }x\geq R\},\quad\mbox{ where }\lim_{x\to\infty}u(x)=0.$$
\end{defi}

\begin{lemm}\label{decay}
  Let $P$ be a union of half-lines meeting at the origin and $\psi$ a self-expander for which
  $$\lim_{r\to\infty}\dist(\psi\setminus B_r, P)=0.$$
  Then $\psi$ is asymptotic to $P$ in the sense of Definition \ref{def:asymptotic}. Moreover, the decay of $u$ for $ x \geq R$ is given by
  $$|u|\leq C_1e^{-x^2/2}, \quad |u'|\leq C_1 x^{-1}e^{-x^2/2},\quad  |u''|\leq C_2  e^{-x^2/2},$$
  and
  $$|u^{(3)}|\leq C_3 xe^{-x^2/2}, \quad |u^{(4)}|\leq C_4 x^2e^{-x^2/2},$$
  where each $C_i$ depends only on $R$, $u(R)$ and $u'(R)$.
\end{lemm}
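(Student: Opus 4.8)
The plan is to analyze the self-expander equation for an end asymptotic to a half-line by writing the end as a graph over that half-line and deriving an ODE for the graph function, then prove the claimed exponential decay via an energy/comparison argument.

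Let me think about the self-expander equation $\vec{k} = \psi^\perp = x^\perp$ where $x^\perp$ is the normal component of the position vector. For a graph $y = u(x)$ over the $x$-axis (after rotating so $L$ is the positive $x$-axis), the curvature is $k = u''/(1+(u')^2)^{3/2}$ and the normal component of position is something like $(xu' - u)/\sqrt{1+(u')^2} \cdot$ (sign), giving the graph ODE roughly $u'' = (1+(u')^2)(xu' - u)$ or similar. Let me be careful: the normal is $\nu = (-u', 1)/\sqrt{1+(u')^2}$, so $x^\perp \cdot \nu = (x,u)\cdot(-u',1)/\sqrt{\cdots} = (u - xu')/\sqrt{1+(u')^2}$, and $\vec{k}\cdot\nu = u''/(1+(u')^2)^{3/2}$. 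Setting these equal gives $u'' = (u - xu')(1+(u')^2)$.

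First I would establish that, since $\dist(\psi\setminus B_r, P)\to 0$ and $\psi$ has bounded curvature on its ends (being a self-expander asymptotic to a line, the expander equation $\vec k = x^\perp$ controls curvature once we know $|x^\perp|$ is small), the end can indeed be written as a graph $u$ over $L$ for $R$ large, with $u, u'\to 0$; this justifies Definition~\ref{def:asymptotic} and reduces everything to the scalar ODE above.

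\begin{proof}[Proof sketch]
Rotate so that $L$ is the positive $x$-axis and write the end of $\psi$ as a graph $(x,u(x))$, $x\ge R$, which is possible for $R$ large since $\dist(\psi\setminus B_r,P)\to 0$ and the curvature is bounded. The self-expander equation $\vec k = x^\perp$ becomes, in graph form,
$$u'' = (u - x u')\,\bigl(1+(u')^2\bigr),$$
where the right-hand side is the scalar version of $x^\perp\cdot\nu - k = 0$ with $\nu = (-u',1)/\sqrt{1+(u')^2}$. Since $u,u'\to 0$, for $x\ge R$ with $R$ enlarged we have $1\le 1+(u')^2\le 2$, and the equation is a small perturbation of the linear equation $u'' + x u' - u = 0$. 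The key comparison is with the model operator $u''+xu'$: the first main step is to show $|u|\le C_1 e^{-x^2/2}$. For this I would use an integrating-factor/maximum-principle argument on $u$ and $xu'-u$: note that $\frac{d}{dx}\bigl(e^{x^2/2} u'\bigr) = e^{x^2/2}(u'' + x u')$, and the equation gives $u''+xu' = u + (u - xu')(u')^2$, so $u'' + xu'$ is controlled by $|u|$ together with a quadratically small correction. Treating the $(u')^2$ term as a negligible perturbation (using $u'\to 0$), I get a Gr\"onwall-type differential inequality forcing $e^{x^2/2}|u'|$ and hence $|u|$ to decay like $e^{-x^2/2}$, with the constant depending on $R,u(R),u'(R)$ through the boundary data of the integrating factor.

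Once $|u|\le C_1 e^{-x^2/2}$ is established, the remaining estimates follow by bootstrapping. From $\frac{d}{dx}(e^{x^2/2}u') = e^{x^2/2}(u+(u-xu')(u')^2)$ and the decay of $u$, integrating from $x$ to $\infty$ (where $u'\to 0$ gives the boundary term) yields $e^{x^2/2}|u'|\lesssim x^{-1}$, i.e.\ $|u'|\le C_1 x^{-1}e^{-x^2/2}$. Plugging $u$ and $u'$ back into the ODE gives $|u''|\le C_2 e^{-x^2/2}$ directly. Differentiating the ODE once and twice and inserting the now-known bounds on $u,u',u''$ produces $|u^{(3)}|\le C_3 x e^{-x^2/2}$ and $|u^{(4)}|\le C_4 x^2 e^{-x^2/2}$, the extra powers of $x$ arising each time from differentiating the factor $x$ in the term $xu'$. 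Each $C_i$ depends only on $R,u(R),u'(R)$ because all constants enter through the initial data fed into the integrating factor and the ODE.
\end{proof}

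The main obstacle I expect is making the comparison rigorous in the presence of the nonlinear term $(u-xu')(u')^2$: one must show a priori that $u'$ is small enough (and that $xu'$ does not grow) so that this term is genuinely lower order relative to the Gaussian decay, rather than circularly assuming the decay one is trying to prove. I would handle this with a continuity/barrier argument, first proving a weaker decay (say $u,u'\to 0$ with some algebraic rate, or merely boundedness of $e^{\epsilon x^2}u$ for small $\epsilon$), and then upgrading to the sharp $e^{-x^2/2}$ rate by feeding the improved bounds back into the integrating-factor identity.
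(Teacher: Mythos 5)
Your setup is correct --- the graph ODE $u''=(1+(u')^2)(u-xu')$ is exactly the one the paper works with, and the reduction of the end to a graph with $u,u'\to 0$ is handled in the paper by citing \cite{SchnuererSchulze_2007}. But the decay mechanism you propose does not close, and you have correctly located the weak point yourself without resolving it. The problem is not really the nonlinear term $(u-xu')(u')^2$; it is the coupling in the \emph{linear} part. Integrating $\frac{d}{dx}\bigl(e^{x^2/2}u'\bigr)=e^{x^2/2}\bigl(u+(u-xu')(u')^2\bigr)$ from $R$ to $x$ gives $|u'(x)|\lesssim e^{-x^2/2}+e^{-x^2/2}\int_R^x e^{s^2/2}|u(s)|\,ds$, and if your current knowledge is $|u|\leq f$ with $f$ decaying slower than Gaussian, then $e^{-x^2/2}\int_R^x e^{s^2/2}f\,ds\approx f(x)/x$; feeding this back through $u(x)=-\int_x^\infty u'$ gains only one power of $x$ per iteration. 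The bootstrap therefore improves $e^{-\alpha x}$ to $e^{-\alpha x}x^{-k}$ for every $k$ but never reaches $e^{-x^2/2}$: the linear equation $u''+xu'-u=0$ has the growing solution $u=x$, and your integrating factor does not project it out.

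The paper's argument supplies precisely the missing ingredient. First, a maximum principle (from \cite{SchnuererSchulze_2007}) lets one normalize signs: either $u\equiv 0$ or, after changing orientation, $u>0$ and $u'<0$ on the end. Then the quantity $v=u-xu'$ satisfies the \emph{closed} first-order equation
$$v'=-xu''=-x(1+(u')^2)v\leq -xv,$$
since $v>0$, which integrates exactly to $v(x)\leq v(R)e^{-(x^2-R^2)/2}$ with no perturbation or Gr\"onwall argument needed (the factor $1+(u')^2\geq 1$ only helps). Because $u>0$ and $-xu'>0$ are both dominated by their sum $v$, this single estimate yields $|u|\leq C_1e^{-x^2/2}$ and $|u'|\leq C_1x^{-1}e^{-x^2/2}$ simultaneously; then $u''=(1+(u')^2)v$ gives the second-derivative bound, and differentiating the ODE gives the rest, as in your bootstrapping step (which is fine once the first two estimates are in hand). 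So the gap is the absence of the monotone quantity $v=u-xu'$ together with the sign normalization; without it, the scheme you describe stalls at sub-Gaussian rates.
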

\begin{proof}
  In \cite{SchnuererSchulze_2007} it is shown that each asymptotic end
  of a self-expander is asymptotic to a half-line. Even more the graph function $u$
  decays exponentially. 
 Since $\psi$ is a self-expander the function $u$
  satisfies
  $$ u^{\prime\prime} = (1+(u^\prime)^2)(u-xu^\prime).$$
  By possibly changing orientation, a simple application of the maximum principle  (see
  \cite{SchnuererSchulze_2007}) implies that we can assume without loss of generality
  that $u > 0$ and $u^\prime < 0$, if the self expander is not identical with
  the half-line. The function $v= u-xu^\prime$ is strictly positive
  and satisfies 
  $$ v^\prime = -x (1+(u^\prime)^2)v < -x v\ .$$
  Integrating this inequality yields the first two estimates. Inserting that into the equation for  $u''$ we get the third estimate and the fourth  and fifth estimates come from computing $u^{(3)}$, $u^{(4)}$, and using the previous derived estimates.
\end{proof}

We say that two self-expanders $\psi_0$ and $\psi_1$ are {\em
asymptotic to each other} if their ends are asymptotic to the 
same half-lines. In this setting, we say they have the {\em same topological class} if there is a smooth family of maps
$$F_t:\psi_0 \longrightarrow\R^2,\quad 0\leq t\leq 1$$
such that $F_0$ is the identity, $F_1(\psi_0)=\psi_1$, the distance between any two triple junctions of $F_t(\psi_0)$ is uniformly bounded below, and
$$\lim_{r\to\infty}\sup\left\{\left|{d F_t}\right|_{F_t(x)}\,|\, x\in \gamma_0\setminus B_r(0)\right\}=0\mbox{ for every }0\leq t\leq 1.$$

\begin{thm}\label{thm:unique}
  If $\psi_0$ and $\psi_1$ are two regular  self-expanders asymptotic
  to each other 
  and in the same topological class, then they coincide.
\end{thm}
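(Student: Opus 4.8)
The plan is to exploit the characterization of regular self-expanders as geodesic networks for the negatively curved metric $g=\exp(|x|^2)(dx_1^2+dx_2^2)$ proved above, and to run a maximum principle for the normal displacement of one network over the other. Since $\psi_0$ and $\psi_1$ lie in the same topological class, the family $F_t$ identifies the branches and triple junctions of $\psi_1$ with those of $\psi_0$; away from the junctions I would write $\psi_1$ as a normal graph $\psi_1=\psi_0+wN$ over $\psi_0$, branch by branch, where $N=JT$ is the unit normal and $w$ is a function on each branch. As both networks satisfy $\vec k=\psi^\perp$, integrating the linearization of this equation along the segment $\tau\mapsto \psi_0+\tau wN$ shows that $w$ solves on each branch a linear second order ODE $w''+b\,w'-c\,w=0$. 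The decisive point is that the zeroth order coefficient has the good sign, $c>0$: this is forced by the negative curvature of $g$, exactly as a Jacobi field along a geodesic in a space of negative curvature obeys $J''=|K|J$ with $|K|>0$. Consequently the operator satisfies the maximum principle, so on the interior of each branch $w$ attains neither a positive local maximum nor a negative local minimum.

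Next I would control the behaviour at infinity. By Lemma \ref{decay} each end of $\psi_0$ and of $\psi_1$ is asymptotic, with exponential decay, to the same half-line, so the graph function $w$ on every non-compact branch tends to $0$ as $x\to\infty$. Hence, if $\psi_0\neq\psi_1$ and therefore $w\not\equiv 0$, after possibly replacing $w$ by $-w$ we may assume $M:=\sup w>0$, and by the maximum principle together with the decay at infinity this supremum is attained at a finite point, which is necessarily a triple junction $q_0$.

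The heart of the argument, and the step I expect to be the main obstacle, is to derive a contradiction at $q_0$, where the normal graph description degenerates because the junction itself is displaced. Writing $V=q_1-q_0$ for the displacement of the corresponding junction of $\psi_1$, the three signed normal heights are governed to leading order by this single vector through $w_j(q_0)=\langle V,N_j\rangle$ for $j=1,2,3$; since $N_j=JT_j$ and the balancing condition gives $T_1+T_2+T_3=0$, one has $N_1+N_2+N_3=0$ and therefore the structural identity
$$ \sum_{j=1}^3 w_j(q_0)=\langle V,\,N_1+N_2+N_3\rangle=0. $$
Thus the three values at the junction sum to zero, so they cannot all equal the positive maximum $M$. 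Applying Hopf's boundary point lemma on the branch where $M$ is attained gives a strict sign for the inward derivative $w_j'(q_0)$ there, and one then combines these derivative signs with the linearization of the angle condition $T_1+T_2+T_3=0$, a Kirchhoff type relation among the $w_j'(q_0)$ and $V$, to reach a contradiction. The care needed to set up and match these linearized junction conditions, rather than the interior estimate, is where the network structure genuinely enters; once the junction case is excluded we conclude $w\equiv 0$, and the two self-expanders coincide.
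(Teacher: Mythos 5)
Your strategy --- writing $\psi_1$ as a normal graph $\psi_1=\psi_0+wN$ over $\psi_0$ and running a pointwise maximum principle for $w$ --- is genuinely different from the paper's proof, which joins $\psi_0$ to $\psi_1$ by a one-parameter family of geodesic networks for the metric $g$ (junctions moving along $g$-geodesics, branches replaced by $g$-geodesics asymptotic to the fixed half-lines), shows via the second variation formula that the renormalized $g$-length $F(s)$ is convex because $g$ has strictly negative curvature, and concludes from the two critical points at $s=0,1$ that $F$ is constant and the variation field vanishes. Measured against that, your proposal has two genuine gaps. First, the graph representation is assumed rather than proved: membership in the same topological class only supplies a homotopy $F_t$, and Lemma \ref{decay} only gives graphicality over the common asymptotic half-lines outside a large ball. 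On the compact part the two expanders need not be normal graphs over one another, so $w$ need not exist; the paper's interpolation by geodesics is designed precisely to sidestep this.

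Second, and more seriously, the junction step --- which you yourself flag as the main obstacle --- is not carried out, and the ingredients you list do not close it. The identity $\sum_{j}w_j(q_0)=\langle V,N_1+N_2+N_3\rangle=0$ only rules out all three boundary values equalling $M>0$ simultaneously; it is perfectly consistent with $w_1(q_0)=M$ and $w_2(q_0)+w_3(q_0)=-M$. Hopf's lemma on the maximizing branch gives $w_1'(q_0)<0$ for the derivative into the branch, and the linearized angle condition (three normals at $120$ degrees summing to zero) forces $w_2'(q_0)=w_3'(q_0)=w_1'(q_0)<0$ up to curvature corrections from the tangential displacement $\langle V,T_j\rangle$ of the junction --- but this is again consistent: branches $2$ and $3$ simply start below $M$ and decrease, so no contradiction follows from signs alone. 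In addition, $w_j(q_0)$ is not well defined without a convention, since the corresponding branch of $\psi_1$ terminates at the displaced junction $q_1\neq q_0$, and the relation $w_j(q_0)=\langle V,N_j\rangle$ is only a first-order statement while your $w$ is of finite size. Without a quantitative junction lemma converting these boundary relations into a genuine contradiction, the argument does not yield $w\equiv 0$; the paper's integral convexity argument is the standard device for getting around exactly this difficulty at triple points.
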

\begin{proof}
  Let $(x^0_i)_{i\in A}$ and $(x^1_i)_{i\in A}$ denote the  triple
  junctions (finite set) of $\psi_0$ and $\psi_1$
  respectively.  Because the networks are in the same topological class, we can rearrange the elements of  $(x^0_i)_{i\in A}$ so that each $x_i^0$ is connected to $x_i^1$ by the existing deformation of $\psi_0$ into $\psi_1$. Denote by $(x^s_i)_{0\leq s \leq 1}$ the unique geodesic connecting these points.  
  
  For each $s$  we consider the  network $\psi_s$ such that  if $x_i^0$ is connected to $x_j^0$ by a geodesic, then $x_i^s$ is connected to $x_j^s$ through a geodesic as well.   To handle the non-compact branches we proceed as follows. Let $P$ denote a common asymptotic half-line to $\psi_0$ and $\psi_1$, which means that there are geodesics $\gamma_0\subset \psi_0$, $\gamma_1\subset \psi_1$ asymptotic to $P$ at infinity and starting at some points $x^0_i$ and $x^1_i$ respectively. Define $\gamma_s$ to be the unique geodesic starting at $x^s_i$ and asymptotic to $P$. Because these are geodesics with respect to a negatively curved metric it is easy to see that if $\gamma_s$ intersects $\gamma_{s'}$ then they must coincide.
  
  Hence, we  have constructed a smooth family  of
  triple-junction networks $(\psi_s)_{0\leq s\leq 1}$ connecting 
  $\psi_0$ and $\psi_1$ and such that:
\begin{enumerate}

\item [i)] The triple-junctions $(x^s_i)_{i\in A}$ of $\psi_s$ connect
 the triple-junctions of $\psi_0$ to the ones of $\psi_1$ and, for
 each index $i$ fixed, the path $(x_i^s)_{0\leq s \leq1}$ is a
 geodesic with respect to the metric $g$.\\

 \item [ii)] Each branch of $\psi_s$ is a geodesic for $g$.\\

\item[iii)]There is $R$ large enough so that $\psi_s\setminus B_R(0)$ has
  $N$ connected components, each asymptotic to an half-line $L_j$,
  $j=1,\ldots, N$. We can find angles $\alpha_j$  such that each end
  of $\psi_s$ becomes parametrized as  
$$\psi_s(x)=xe^{i\alpha_j}+u_{j,s}(x)e^{i(\alpha_j+\pi/2)}\quad\mbox{for all }x\geq R.$$
This follows from Lemma \ref{decay}.\\

\item[iv)] The vector $$X=\frac{d}{ds}\psi_s$$ is continuous,  $C^1$
  when restricted to each branch, and 
  $$X=O(e^{-r^2/2}), \quad \nabla X=O(r^{-1}e^{-r^2/2}).$$
  Moreover
  $$\alpha_{j,s}=\frac{d u_{j,s}}{ds}$$
  satisfies
  $$|\alpha_{j,s}|=O(e^{-x^2/2})\quad  |\alpha'_{j,s}|=O(x^{-1}e^{-x^2/2}).$$
  
   It is enough to provide justification for the second set of estimates.
  For ease of notation we omit the indexes $s$ and $j$ on $\alpha_{j,s}$ and $u_{j,s}$. We have
  $$\alpha''=(1+(u')^2)(\alpha-x\alpha')+2u'\alpha'(u-xu').$$
 We can assume without loss of generality that $\alpha(R)\geq 0$. Moreover, it follows from our construction that 
 $$\lim_{x\to\infty}|\alpha(x)|+|\alpha'(x)|=0.$$
 A simple application of the maximum principle shows that $\alpha$ can not have negative local minimum or a positive  local maximum. Hence, $\alpha\geq 0$ and $\alpha'\leq 0$. The function $\beta=\alpha-x\alpha'$ satisfies
 $$\beta'=-x(1+(u')^2)\beta-2xu'\alpha'(u-xu')\leq -x\beta$$
because $u'(u-xu')\leq 0$ (see proof of Lemma \ref{decay}), and integration of this inequality implies property iv).

\end{enumerate}

Denote by $L$ the length function with respect to the metric $g$ and
consider the family of functions
\begin{multline*}
  F_t(s)=L(\psi_s\cap
  B_{2R}(0))+\sum_{j=1}^N\int_{2R}^t\exp((x^2+u^2_{j,s})/2)\sqrt{1+(u_{j,s}')^2}dx\\
  -N\int_{2R}^t\exp(x^2/2)dx.
\end{multline*}
The decays given in Lemma \ref{decay} imply the existence of a constant $C$ such that for every $t\leq t'$
\begin{equation}\label{C^3}
\|F_t-F_{t'}\|_{C^3}\leq C\exp(-t),
\end{equation}
and so when $t$ tends to infinity $F_t$ converges uniformly in $C^2$ to a function $F$. Furthermore, if $s=0$ or $s=1,$ we have  from combining \eqref{first-derivative} with property iv) that
$$ \lim_{t\to\infty} \frac{d F_t}{ds}(s)=0,$$
and thus $F$ has a critical point when $s=0$ or $s=1$.

A standard computation shows that on each  compact branch we have (assuming
parametrization proportional to arc-length)
\begin{equation*}\begin{split}
 \frac{d^2}{ds^2}\int_a^b (g(\psi'_s,\psi'_s))^{1/2}&dt = \int_a^b|\psi'_s |^{-1}(|(\nabla _{\psi_s'}
  X)^{\bot}|^2\\
  & \quad -\mbox{Rm}(X,\psi'_s,\psi'_s,X))dt +\left(|\psi'_s|^{-1}
    g(\nabla_X X, \psi'_s)\right]_a^b\\
  &=\int_a^b|\psi'_s |^{-1}(|(\nabla _{\psi'_s} X)^{\bot}|^2-\mbox{Rm}(X,\psi'_s,\psi'_s,X))dt,
\end{split}\end{equation*}
where we used  property i) on the second equality and all the
geometric quantities are computed with respect to the metric $g$. Combining this identity with property iv) we have
\begin{equation*}
\frac{d ^2 F_t}{ds^2}=\int_{\psi_s\cap B_{t}(0)}|\psi'_s |^{-2}(|(\nabla _{\psi'_s} X)^{\bot}|^2-\mbox{Rm}(X,\psi'_s,\psi'_s,X))dl+O(e^{-t}).
\end{equation*}
The Gaussian curvature of $g$ is equal to $-e^{-|x|^2}$ and so the integrals above are bounded independently of $t$. Therefore, we obtain from \eqref{C^3} that
$$\frac{d^2 F}{ds^2}(s)= \int_{\psi_s}|\psi'_s |^{-2}(|(\nabla _{\psi'_s} X)^{\bot}|^2-\mbox{Rm}(X,\psi'_s,\psi'_s,X))dl\geq 0$$
where the last inequality comes form the fact that  $g$ has strictly negative Gaussian curvature. As a result,   $F$ is a convex function with  two critical points and hence identically constant.
The formula above  implies that $X$ must be
a constant multiple of $\psi'_s$ and thus it must vanish at all 
triple-junction  points.  The fact that $X$ is continuous implies that $X$ is identically zero and this proves the desired result. 
\end{proof}

Before using this Theorem to prove a compactness result we need one more definition.

\begin{defi}
Two regular networks $\sigma_0$ and $\sigma_1$ are in the same $(\nu,\eta, r,R,C)$ topological class if there is a smooth family $(\hat \sigma_t)_{0\leq t\leq 1}$ of networks, with possible boundary points, such that for every $0\leq t\leq 1$
  \begin{enumerate}
  \item [a)]  the distance between any two triple junctions of $\hat \sigma_t$ is bigger or equal to $\eta$;
  \item[b)] all the triple junctions of $\hat \sigma_t$ are contained in $B_{r}(0)$ and the boundary points of $\hat \sigma_t$ are contained outside $B_{R}(0)$, with $r\leq R$;
  \item[c)] For every $R \geq s\geq r$ 
  $$\dist (\hat \sigma_t\setminus B_s, P)\leq
    \nu+C\exp(-s^2/C);$$
 \item[d)]$\sigma_0\cap B_{R} \subseteq \hat \sigma_0$ and $\sigma_1\cap B_{R} \subseteq \hat \sigma_1$.
  \end{enumerate}

\end{defi}

We can now state the following corollary.
 
\begin{cor}\label{unique}
  Let $\psi$ be a regular self-expander with ends asymptotic to a
  union of half-lines $P$. Fix $r_1$, $\eta$, $C_1$,  $D_1$,  $\alpha<1/2$,  and $R$.
  
  For every $\varepsilon$, there are
  $R_1\geq R$,
  $\beta$, and $\nu$, all depending on $\varepsilon,$ $r_1,$ $\eta,$ $C_1,$ $D_1,$ $\alpha,$ $P,$ $R,$, so that   if $\sigma$ is a regular network   that satisfies:
  \begin{enumerate}
  \item[i)] $$\displaystyle \H^1(\sigma\cap B_r(x))\leq D_1r\mbox{ for all $x$  and $r$};$$
  \item[ii)]$$\int_{\sigma\cap B_{R_1}(0)}|\vec k-x^{\bot}|^2d\H^1\leq \beta;$$
  \item[iii)] $\sigma$ and $\psi$ are in the same  $(\nu,\eta_1, r_1,R_1,C_1)$ topological class
    \end{enumerate}
  then $\sigma$ must be  $\varepsilon$-close in $C^{1,\alpha}(B_{R_1}(0))$ to $\psi$.
\end{cor}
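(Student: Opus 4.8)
The plan is to argue by contradiction, reducing the statement to the uniqueness result of Theorem~\ref{thm:unique} through a compactness argument. Fix $\varepsilon>0$ and suppose the conclusion fails for every choice of parameters. Then one can select sequences $R_1^{(n)}\to\infty$, $\beta_n\searrow 0$ and $\nu_n\searrow 0$ together with regular networks $\sigma_n$ satisfying (i)--(iii) with these parameters, but such that $\sigma_n$ is not $\varepsilon$-close to $\psi$ in $C^{1,\alpha}(B_{R_1^{(n)}}(0))$. The fixed data $r_1,\eta,C_1,D_1,\alpha,P,R$ stay the same along the sequence. The goal is to show that, after passing to a subsequence, $\sigma_n$ converges to a regular self-expander which must equal $\psi$, and that this convergence is strong enough to produce a contradiction.

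First I would extract a limit. The bounded length ratios in (i) give, via the standard compactness for integral varifolds with bounded first variation, a subsequential varifold limit $\sigma_\infty$; the real content is to upgrade this to $C^{1,\alpha}_{\mathrm{loc}}$ convergence away from infinity. On any fixed ball $B_{R_0}(0)$ the defect bound (ii) together with $|x^\bot|\le R_0$ yields a uniform $L^2$ bound $\int_{\sigma_n\cap B_{R_0}}|\vec k|^2\,d\mathcal{H}^1\le C(R_0)$; since each branch is then bounded in $W^{2,2}$ and $W^{2,2}\hookrightarrow C^{1,1/2}$ in one dimension, the branches are precompact in $C^{1,\alpha}$ for $\alpha<1/2$. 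The triple junctions cannot collide, by condition (iii)(a) (their mutual distance is $\ge\eta$), and cannot escape to infinity, by (iii)(b) (they lie in $B_{r_1}$), so they persist in the limit, finite in number, and the $2\pi/3$ angle condition passes to the limit by $C^1$ convergence. Because $\vec k_n-x^\bot\to 0$ in $L^2$ while $\vec k_n\to\vec k_\infty$, the limit satisfies $\vec k_\infty=x^\bot$, so $\sigma_\infty$ is a regular self-expander. Letting $\nu_n\to 0$ in (iii)(c) gives $\dist(\sigma_\infty\setminus B_s,P)\le C_1\exp(-s^2/C_1)$, so $\sigma_\infty$ is asymptotic to $P$ in the sense of Lemma~\ref{decay}; and the stability of the combinatorial structure, guaranteed again by the uniform separation of junctions in (iii)(a), shows $\sigma_\infty$ and $\psi$ lie in the same topological class. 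Theorem~\ref{thm:unique} then forces $\sigma_\infty=\psi$.

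It remains to convert the convergence into the contradiction, and this is where the main difficulty lies: the closeness must hold on the growing balls $B_{R_1^{(n)}}(0)$, not merely on a fixed compact set. I would handle this by splitting at a fixed large radius $R_0$. Inside $B_{R_0}(0)$ the $C^{1,\alpha}_{\mathrm{loc}}$ convergence $\sigma_n\to\psi$ established above makes $\sigma_n$ eventually $(\varepsilon/2)$-close to $\psi$. Outside $B_{R_0}(0)$ one uses that $\psi$ is an exact self-expander, hence a $C^{1,\alpha}$-small graph over $P$ with exponential decay by Lemma~\ref{decay}, so choosing $R_0$ large makes the tail of $\psi$ arbitrarily small; the corresponding statement for $\sigma_n$ is the genuine obstacle, since (iii)(c) controls only the Hausdorff distance to $P$ in $C^0$. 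The plan there is to promote the $C^0$-smallness, together with the $L^2$ almost-expander bound (ii), to uniform $C^{1,\alpha}$-smallness of $\sigma_n$ as a graph over $P$ on the ends, via interior Schauder estimates for the perturbed expander equation $u''=(1+(u')^2)(u-xu')$ applied on unit-length subintervals; the exponential decay of $x^\bot$ along the ends keeps these estimates uniform in $n$ and in the radius out to $R_1^{(n)}$. Combining the two regions, $\sigma_n$ is eventually $\varepsilon$-close to $\psi$ in $C^{1,\alpha}(B_{R_1^{(n)}}(0))$ in the sense of Definition~\ref{def:c1alphaclose}, contradicting the choice of $\sigma_n$ and completing the argument.
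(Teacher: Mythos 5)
Your overall strategy (compactness plus the uniqueness Theorem \ref{thm:unique}) is the right one, and your treatment of the fixed compact region matches the paper's. The gap is exactly where you flag it: the tails. By negating the statement with $R_1^{(n)}\to\infty$ along \emph{general} networks $\sigma_n$, you oblige yourself to prove $\varepsilon$-closeness on the growing balls $B_{R_1^{(n)}}(0)$, and the proposed fix --- ``interior Schauder estimates for the perturbed expander equation'' --- does not apply: $\sigma_n$ is not a self-expander and does not satisfy $u''=(1+(u')^2)(u-xu')$; hypothesis ii) only bounds the defect $\vec k-x^{\bot}$ in $L^2$. Moreover, condition iii)(c) gives only one-sided Hausdorff proximity of $\sigma_n$ to $P$, not graphicality over $P$ on the far ends, and the attempt to extract curvature control from ii) on an annulus at radius $x_0$ is circular, since $|x^{\bot}|$ can be of order $x_0$ unless the slope over $P$ is already known to be small. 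Converting $C^0$-proximity plus an $L^2$ defect bound into uniform $C^{1,\alpha}$ graph estimates out to radius $R_1^{(n)}$ is genuine work (compare the effort required in Lemma \ref{asymptotic} even in a more controlled setting) and is not supplied by your sketch.

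The paper avoids this difficulty entirely by a two-step structure. First it proves the rigidity statement for \emph{exact} self-expanders: for a sequence of self-expanders in the $(2\nu_i,\eta/2,r_1+1,R_i-1,C_1)$ class with $R_i\to\infty$ and $\nu_i\to 0$, the ends are controlled by the pointwise exponential decay of Lemma \ref{decay} (available precisely because these are exact solutions of the expander ODE, with constants depending only on data at a fixed radius), so failure of closeness on $B_{R_i}$ forces failure on a fixed ball $B_{r_3}$, where compactness and Theorem \ref{thm:unique} give a contradiction. This fixes $R_1$ and $\nu$ once and for all. Second, for a general network $\sigma$, hypotheses i) and ii) give $\int_{B_{R_1}(0)\cap\sigma}|\vec k|^2\,d\mathcal{H}^1\leq \beta+D_1R_1^3$ on the now-fixed ball, and a compactness argument as $\beta\to 0$ shows that $\sigma$ is $\varepsilon_1$-close in $C^{1,\alpha}(B_{R_1}(0))$ to some exact self-expander $\psi'$, to which the first step applies. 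You should restructure your argument along these lines: let the radius tend to infinity only in the self-expander step, and never ask for closeness of general networks on balls of unbounded radius.
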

\begin{proof}
We start by finding $R_1\geq R$ and $\nu$ so that if $\sigma$ is a regular self-expander  in the same $(2\nu,\eta/2, r_1+1,R_1-1,C_1)$ topological class as $\psi$ then $\sigma$ must be  $\varepsilon/2$-close in $C^{1,\alpha}(B_{R_1}(0))$ to $\psi$.
	
	Suppose not. Then we can find a sequence of self-expanders $\sigma_i$  with $R_i$ tending to infinity, $\nu_i$ tending to zero, and such that $\sigma_i$ is not $\varepsilon/2$-close in $C^{1,\alpha}(B_{R_i}(0))$ to $\psi$.  Let $b_i$ denote a smooth branch of $\sigma_i$ that connects $\sigma_i\cap \{|x|=R_i\}$ to one of the triple junctions inside $B_{2r_1}(0)$. Because of c) and Lemma \ref{decay}, there is some $r_2$ such that, for every $i$ large enough,  $b_i\setminus B_{r_2}(0)$ can be written as a graph of a function with $C^{1,\alpha}$ norm less than $\varepsilon/4$ and defined over part of $P\setminus B_{r_2}$. As a result, if $\sigma_i$ is not $\varepsilon/2$-close in $C^{1,\alpha}(B_{R_i}(0))$ to $\psi$,  we can find $r_3$ such that $\sigma_i$ is not $\varepsilon/2$-close in $C^{1,\alpha}(B_{r_3}(0))$ to $\psi$ for every $i$ large enough. Because each branch of $\sigma_i$ is a geodesic of $g$, it is simple to see that we have uniform length bounds for $\sigma_i$. Standard compactness arguments show that a subsequence of $\sigma_i$ converges in $C^{1,\alpha}$ to regular self-expander $\sigma$ which, in virtue of property c), is asymptotic to $P$ at infinity. If we can show that $\psi$ and $\sigma$ are in the same topological class, then Theorem \ref{thm:unique} implies that they have to coincide and this is a  contradiction. 
	
	Arguing  as in Theorem \ref{thm:unique}, we can change the family $(\hat \sigma^i_t)_{0\leq t\leq 1}$ of networks given by hypothesis iii) and construct a  family $(\hat \psi^i_t)_{0\leq t\leq 1}$ of networks connecting $\psi \cap B_{R_i-1}(0)$ to  $\sigma_i \cap B_{R_i-1}(0)$ such that all the branches are geodesics for $g$, and those which intersect $\{|x|=R_i-1\}$ have a uniform decay towards the half-lines of $P$ ( $\hat \psi^i_t$ should  satisfy, with obvious modifications, properties i)-iv) described  in the proof of Theorem \ref{thm:unique}). Making $i$ tending to infinity,  it is easy to recognize that $(\hat \psi^i_t)_{0\leq t\leq 1}$ converges to a family of networks $(\hat \psi_t)_{0\leq t\leq 1}$ connecting $\psi$ to $\sigma$  and satisfying properties  i)-iv) mentioned in Theorem \ref{thm:unique}. Hence the self-expanders must be in  the same topological class.

	Set $\varepsilon_1=\min\{\varepsilon/2, \nu, \eta/2, r_1, 1/2\}$, and  let $\sigma$ be a regular network  satisfying the hypothesis of the lemma.
 Condition i) and ii) imply that
  $$ \int_{B_{R_1}(0)\cap\sigma} |\vec{k}|^2 d\H^1 \leq \beta+\int_{B_{R_1}(0)\cap
    \sigma}|x^{\bot}|^2d\H^1\leq \beta +D_1 R_1^3 .$$
  Thus we have uniform $C^{1,1/2}$ estimates for $\sigma$ in $B_{R_1}(0)$. A standard compactness argument shows  that by taking
  $\beta$ small enough, we can assume that $\sigma$ is
  $\varepsilon_1$-close in $C^{1,\alpha}$ to a regular self-expander $\psi^\prime$
  in $B_{R_1}(0)$. By the reasoning before, we thus get that $\psi^\prime$ is $\varepsilon/2$-close in $C^{1,\alpha}$ to $\psi$ in $B_{R_1}(0) $ and this implies the desired result. 
\end{proof}
 
\section{Main Theorem}\label{sec:main theorem}
To show the short-time existence result for non-regular initial
networks, we will use a special family of approximating regular
networks. We will state the needed properties of such an approximating
family below and show in the sequel the needed estimates for the proof
of the short-time existence result. We will show in section
\ref{sec:short time ex} that for any non-regular initial network such
an approximating family exists.

Fix a regular self expander $\psi$ which is asymptotic to a union of half-lines denoted by $P$. Note that, by Lemma \ref{decay}, $P$ coincides with the blow-down of $\psi$. For any $x_0\in \R^2$  and $t>0$  denote 
$$\Phi(x_0,t)(x)= \rho_{x_0,0}(x,-t)=\frac{1}{\sqrt{4\pi t}}\exp\left({-\frac{|x-x_0|^2}{4t}}\right).$$ 
We also use the notation
$$A(r,R)=\{x\in \R^2\,:\,r\leq |x|\leq R\}.$$
Let $(\gamma^{s})_{0< s\leq c}$ be a family of regular networks on
$\R^2$ such that for every $0< s\leq c\,$:
\begin{itemize}
\item[H1)] There is a constant $D_1$ such that
  $$\H^1(\gamma^s\cap B_r(x))\leq D_1r\mbox{ for all $x$ and $r$}.$$
\item[H2)] There is a constant $D_2$ such that for every $s$ and $x$ in $\gamma^s$
  $$|\theta^s(x)|+|\beta^s(x)|\leq D_2(|x|^2+1).$$
\item[H3)] $\tilde\gamma^s=\displaystyle \frac{\gamma^s}{\sqrt{2s}}$
  converges in $C^{1,\alpha}_{\l}$ to $\psi$. Without loss of generality we assume that
  \begin{equation*}\label{zero}
    \lim_{s\to 0}(\theta^s+\tilde\beta^s)=0,
  \end{equation*}
where $\tilde\beta^s$ is primitive for the Liouville form of $\tilde \gamma^s$.
\item[H4)] The connected components of
  $P\cap A(r_0\sqrt{s},4)$ are in one-to-one correspondence with the
  connected components of $$\gamma^s\cap A(r_0\sqrt{s},4)$$ and if
  $\theta$ is the angle that a half-line in $P$ makes with the
  $x$-axis, there is a function $u_s$ such that a connected
  component $\sigma$ of $\gamma^s\cap A(r_0\sqrt{s},4)$ can be
  parametrized as
  $$\sigma= \{xe^{i\theta}+u_s(x)e^{i(\theta+\pi/2)}\,|\, \mbox{ for
    all }  r_0\sqrt s\leq x\leq 4\}.$$
  Moreover, the function $u_s$ satisfies
  $$|u_s(x)|+ |x|\left|\frac{du_s}{dx}\right|+|x|^2\left|\frac{d^2u_s} {dx^2}\right| \leq D_3 \left(|x|^2+(2s)^{1/2}\exp\left(-{|x|^2}/{4s} \right) \right)$$
  for some constant $D_3$.

\end{itemize}

Assume that $(\gamma^s_t)_{t\geq 0}$ is a smooth solution to network
flow with initial condition $\gamma^s$ and denote by $\Theta^s_t(x,r)$
the Gaussian density of $\gamma^s_t$
\begin{equation}
\Theta^s_t(x_0,r)=\int_{\gamma^s_t}\Phi(x_0, r^2)d\H^1.
\end{equation}
Note that in our previous notation we have $\Theta_t^s(x_0,r) =
\Theta(x_0, t+r^2, r)$ with respect to the flow $(\gamma^s_t)$. We will show

\begin{thm}\label{main}
  There are $s_1, \delta_1,$ and  $\tau_1$  depending on $\alpha<1/2,$ $D_1,$ $D_2,$  $D_3,$  $\psi,$ $ r_0,$ and $\varepsilon_0$,   so that  if 
  $$ t\leq\delta_1,\,   r^2\leq \tau_1t,\,\mbox{ and }\,s\leq s_1,$$ then
  $$\Theta^s_t(x_0,r)\leq 3/2+\varepsilon_0$$
  for every $x_0$ in $B_1(0)$.
\end{thm}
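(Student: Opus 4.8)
The plan is to control the Gaussian density $\Theta^s_t(x_0,r)$ by comparing the flow $(\gamma^s_t)$ to the self-expander $\psi$, exploiting the monotonicity formula of Section~\ref{monotonicity} together with the rigidity/closeness results of Section~\ref{sect:self-expanders}. The key structural observation is the scaling: at the initial time the network $\gamma^s$, after rescaling by $(2s)^{-1/2}$, converges to $\psi$ by hypothesis H3). Since $\psi$ is a self-expander, the rescaled flow $\tilde\gamma^s_t = \gamma^s_{t}/\sqrt{2(s+t)}$ should stay close to $\psi$ for the relevant range of times, because a self-expander is a fixed point of the rescaled flow. The density $\Theta^s_t(x_0,r)$ measured at scale $r$ with $r^2 \le \tau_1 t$ is then essentially the density of a configuration close to $\psi$ at a small scale, and near the junction $\psi$ looks like three half-lines meeting at $2\pi/3$, whose Gaussian density is exactly $3/2$. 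Thus I expect $\Theta^s_t(x_0,r) \le 3/2 + \varepsilon_0$ once closeness to $\psi$ is established at the appropriate scale.

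Concretely, I would first argue by contradiction: suppose the conclusion fails, so there are sequences $s_j \to 0$, times $t_j \le \delta_1$, scales $r_j^2 \le \tau_1 t_j$, and points $x_0^j \in B_1(0)$ with $\Theta^{s_j}_{t_j}(x_0^j, r_j) > 3/2 + \varepsilon_0$. The heart of the argument is to control the integrated deviation $\int_{\gamma^s_t} |\vec k - x^\perp|^2$ (the self-expander defect) using the monotone quantities from Lemma~\ref{evol} and its localized version Lemma~\ref{thm:evol.local}. The function $f(\beta_t + 2t\theta_t)$ in Lemma~\ref{evol}(ii) is designed precisely so that its time-derivative controls $|x^\perp - 2t\vec k|^2$; after the parabolic rescaling $x \mapsto x/\sqrt{2t}$ this becomes the expander quantity $|\vec k - x^\perp|^2$. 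Integrating this monotonicity between a fixed reference time and $t_j$, and using the growth control in H2) to bound the boundary contributions in the localized formula, I would show that the rescaled networks $\tilde\gamma^{s_j}_{t_j}$ have small $L^2$ expander-defect on $B_{R_1}(0)$, i.e. hypothesis ii) of Corollary~\ref{unique} holds with small $\beta$.

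Next I would verify the topological-class hypothesis iii) of Corollary~\ref{unique} for the rescaled flow: hypotheses H4) and H3) pin down the asymptotic half-lines $P$ and the decay towards them (matching condition c) of the topological-class definition), while H3) together with short-time smoothness guarantees that the triple junctions stay separated and inside the required balls. Applying Corollary~\ref{unique} then yields that $\tilde\gamma^{s_j}_{t_j}$ is $\varepsilon$-close in $C^{1,\alpha}(B_{R_1}(0))$ to $\psi$. Since the density $\Theta^{s_j}_{t_j}(x_0^j, r_j)$ at the small scale $r_j \le \sqrt{\tau_1 t_j}$ is a density of this nearly-expander configuration viewed near one of its junctions (or along a smooth branch), $C^{1,\alpha}$-closeness forces the density to converge to that of the tangent object at $x_0^j$, which is either a single multiplicity-one line (density $1$) or three half-lines at $2\pi/3$ (density $3/2$); in either case the limit is $\le 3/2$, contradicting $\Theta > 3/2 + \varepsilon_0$.

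\textbf{The main obstacle} I anticipate is making the two-scale comparison rigorous: the density is measured at scale $r_j$ which may be much smaller than the scale $\sqrt{t_j}$ at which the expander approximation lives, so one must ensure that $C^{1,\alpha}$-closeness to $\psi$ on a \emph{fixed} rescaled ball $B_{R_1}(0)$ translates into control of densities at \emph{all} smaller interior scales uniformly. This requires choosing $\tau_1$ (hence $R_1$) and $\delta_1$ in the correct order so that the window $r^2 \le \tau_1 t$ lands strictly inside the region where closeness to $\psi$ is available, and propagating the localization cutoff errors from Lemma~\ref{thm:evol.local} (the annular term on $B_3 \setminus B_2$, controlled via H2)) down through the rescaling without blow-up. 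Correctly tracking these nested scale parameters, rather than any single estimate, is the delicate point.
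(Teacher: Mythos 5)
Your proposal assembles the right ingredients (the localized monotonicity formula applied to $\beta_t+2t\theta_t$ to control the expander defect $\int|\vec k-x^\perp|^2$, Corollary \ref{unique} to upgrade this to $C^{1,\alpha}$-closeness to $\psi$, and the fact that a configuration close to $\psi$ has density ratios near $1$ or $3/2$), but the logical structure of a direct contradiction at an arbitrary failure point $(s_j,t_j,r_j,x_0^j)$ does not close up. The circularity is this: to apply Corollary \ref{unique} at time $t_j$ you must verify the topological-class hypothesis iii), and in particular condition a), a uniform lower bound on the separation of the triple junctions of $\tilde\gamma^{s_j}_t$ for \emph{all} $t\le t_j$. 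In the paper this separation is itself deduced from the density bound $\tilde\Theta^s_t(x,r)\le 3/2+\varepsilon_0<2$ via Lemma \ref{junction}; your claim that ``H3) together with short-time smoothness guarantees that the triple junctions stay separated'' is only valid on the time scale $t\lesssim s$ coming from standard short-time existence, whereas the relevant times $t\le\delta_1$ are independent of $s$ and hence vastly larger. The paper resolves this by a genuine bootstrap: it sets $T_0=\sup\{T:\ \tilde\Theta^s_t(x,r)\le 3/2+\varepsilon_0 \text{ for } x\in B_{K_0},\ r^2\le\tau_1,\ t\le T\}$, uses the bound on $[0,T_0]$ to verify the hypotheses of Corollary \ref{unique} at some good time $t_1\in[T_0/a,T_0]$, and then extends the bound to $(1+2q_1)t_1>T_0$, a contradiction with maximality. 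To repair your argument you would have to take $t_j$ to be the \emph{first} failure time, and you would still need the forward-propagation step (Lemma \ref{shortime}): the integral estimate only produces smallness of the defect at \emph{some} time in an interval, not at the failure time itself, so $C^{1,\alpha}$-closeness at $t_1$ must be converted into a density bound on the whole forward window $[t_1,(1+2q_1)t_1]$. Your final step, which evaluates the density directly at $t_j$ from closeness at $t_j$, silently assumes closeness at a time you cannot reach.

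A second substantive omission is the spatial decomposition. Closeness to $\psi$ is only available on a fixed ball $B_{R_1}(0)$ in the rescaled picture, i.e.\ on $B_{R_1\sqrt{2(s+t)}}(0)$ in the original coordinates, which is a tiny neighborhood of the non-regular point. The theorem asserts the density bound for every $x_0\in B_1(0)$, so for $|x_0|\gg\sqrt{t}$ the expander comparison says nothing and one needs the separate ``far from origin'' estimate (Lemma \ref{density1}, using only H1), H4) and Huisken's monotonicity) together with the intermediate-annulus estimate of Lemma \ref{asymptotic}; likewise the regime $t\le q_1 s$ must be handled separately by continuity from the initial data. Your outline treats all of $B_1(0)$ as if it were visible in the rescaled ball where $\psi$ lives. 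These are not presentational issues: without the bootstrap the application of Corollary \ref{unique} is unjustified, and without the far-field estimate the conclusion is only proved on a ball of radius $O(\sqrt{t})$ rather than on $B_1(0)$.
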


\begin{proof}
Throughout the proof it will be understood that, unless stated, all constants will depend only on $\alpha<1/2,$ $D_1,$ $D_2,$  $D_3,$  $\psi,$ $ r_0,$ and $\varepsilon_0.$ All the lemmas will be proven in section \ref{proofs}.

Set $$\tilde \gamma^s_t=\frac{1}{(2(s+t))^{1/2}}\gamma^s_t\, .$$  
We start by proving estimates that hold either for short-time or
  far from the origin. They will be simple consequences of Huisken's monotonicity formula.
  \begin{lemm}\label{density1} 

 [Far from origin estimate]
  There are
    $\delta_1$ and $K_0$  so that if $r^2\leq t \leq \delta_1$, then
    $$\Theta^s_t(x_0,r)\leq 3/2+\varepsilon_0$$
    for every $x_0$  with $1\geq |x_0|\geq K_0\sqrt{2t}$.\\
    
    [Short-time estimate]
    There are $s_1$ and $q_1$  such that if $s\leq s_1$, $r^2, t\leq q_1s$, then  
    \begin{equation}\label{total}
      \Theta^s_t(x,r)\leq 3/2+\varepsilon_0
    \end{equation}
    for every $x$ in $B_{1}(0).$
 \end{lemm}

 \begin{rmk}\label{remark}
 \begin{itemize}
 \item[1)]It follows from the second estimate that we need only to prove Theorem \ref{main} when  $t\geq q_1s$. 
 \item[2)]Setting
 $$\tilde \Theta^s_t(x,r)=\int_{\tilde \gamma^s_t}\Phi(x, r^2)d\H^1,$$
 and in virtue of
 $$\Theta^s_t(x_0,r)=\tilde \Theta^s_t\left(\frac{x_0}{(2(s+t))^{1/2}}, \frac{r}{(2(s+t))^{1/2}}\right),$$
 in order to prove Theorem \ref{main} it suffices to find $s_1, \delta_1,$ and $\tau_1$ such that for every $s\leq s_1$, $q_1s\leq t\leq \delta_1$, $r^2\leq \tau_1$, and $y_0$ with $|y _0|\leq (2(s+t))^{-1/2}$, we have
 $$\tilde \Theta^s_t(y_0,r)\leq 3/2+\varepsilon_0.$$
 \item[3)]
 Set $$\tau_1=q_1/(2(q_1+1)).$$
 The second estimate in the lemma implies that for $s\leq s_1$, $t\leq q_1 s$, and $r^2\leq \tau_1$ we have
 $$\tilde \Theta^s_t(y_0,r)\leq 3/2+\varepsilon_0$$ 
 for every $|y_0|\leq (2(s+t))^{-1/2}.$
 The first  estimate in Lemma \ref{density1} implies that for $r^2\leq \tau_1, s\leq s_1$ and $q_1s\leq t\leq \delta_1,$ 
$$\tilde \Theta^s_t(y_0,r)\leq 3/2+\varepsilon_0$$
  for every $y_0$ with $K_0\leq |y_0|\leq (2(s+t))^{-1/2}$.
  \end{itemize}
  \end{rmk}
  From now on, consider $K_0, q_1,  s_1,$ and $\delta_1$, given by Lemma \ref{density1} and set $\tau_1=q_1/(2(q_1+1))$.

  In the next two lemmas we control the asymptotic behavior  of $\tilde \gamma^s_t$. The proof will be a bit involving because it is important that $r_1$ does not depend on $\nu$.

  \begin{lemm}[Proximity to $P$]\label{asymptotic}
    There are $C_1$ and $r_1$ so that for every $\nu$ we can find $s_2$, and $\delta_2$ for which the following holds. If 
    $s\leq s_2$,  $t\leq \delta_2,$ and $r\leq 2$, then 
   $$\dist(y_0,P)\leq \nu+C_1\exp(-|y_0|^2/C_1)\quad\mbox{ if }\quad
   y_0\in \tilde \gamma^s_t\cap A\left(r_1,(s+t)^{-1/8}\right),$$
   and
   $$  \tilde\Theta^s_t(y_0,r)\leq 1+\varepsilon_0/2+\nu\quad\mbox{ if }\quad
   y_0\in  A\left(r_1,(s+t)^{-1/8}\right).$$ 
    \end{lemm}

Denote by $F^s_t$ the normal deformation
  $$ F_t^s:  \gamma^s \longrightarrow \R^2$$ such that $\gamma^s_t= F^s_t(\gamma^s)$ and set $\tilde F^s_t=(2(s+t))^{-1/2}F^s_t$ so that $\tilde \gamma^s_t= \tilde F^s_t(\gamma^s)$. Using the previous lemma with $\nu=\varepsilon_0/2$ we obtain, as we shall see in section \ref{proofs},

\begin{lemm}\label{graphical}
There are $r_2$, $\delta_3$, $s_3$,  and $L$,  such that if $t\leq \delta_3$ and  $s\leq s_3$ then
$$|\tilde F^s_0(x)-\tilde F^s_t(x)|\leq L\quad\mbox{  whenever }\quad\tilde F^s_0(x)\in A(r_2, (s+t)^{-1/8}/2).$$
\end{lemm}

 Consider $C_1$ and  $r_1$ given by Lemma \ref{asymptotic}, $r_2, \delta_3, s_3,$ and $L$ given by Lemma \ref{graphical}, and choose $\eta_1=\eta_1(\tau_1)$ given by Lemma \ref{junction}. We then set $r_3=\max\{r_0,r_1,r_2,1\}$. Apply Corollary \ref{unique} where we consider  $R=\sqrt{1+2q_1} K_0+r_3$,  $\varepsilon=\varepsilon(\psi, \alpha)$ to be the one given by Lemma \ref{shortime}, $r_1$ to be $r_3$, and $\eta_1,C_1,D_1,\alpha,$ and $P$ to be the constants already defined. Then, we get the existence of $R_1, \beta,$ and $\nu$ for which Corollary \ref{unique} holds.
 
 Consider now
  $s_2=s_2(\nu),$ 
  $\delta_2=\delta_2(\nu)$ given by Lemma
  \ref{asymptotic} and set $s_4=\min\{s_1,s_2, s_3\}$,  $\delta_4=\min\{\delta_1,\delta_2, \delta_3\}$.
  Finally decrease $s_4, \delta_4$ if necessary
  so that $$ (s_4+\delta_4)^{-1/8}\geq 2R_1.$$

The next lemma is essential to prove  Theorem \ref{main} and its content is that the proximity of $\tilde \gamma^s_t$ to a self-expander can be controlled in an integral sense. It is
the only place where we use the evolution equations derived in section
\ref{monotonicity}.  

Choose	$a>1$ such $(1+2q_1)/a >1$ and set $q=q_1/a.$

\begin{lemm}\label{integral}
  There are  $\delta_0$, and $s_0$   so that for every $$qs \leq T\leq
  \delta_0\quad\mbox{ and } \quad s\leq s_0,$$ we have
  $$\frac{1}{(a-1)T}\int^{aT}_{T}\int_{\tilde \gamma^s_t\cap B_{R_1}}|\vec
  k-x^{\bot}|^2d\H^1dt\leq \beta.$$
\end{lemm}

  Consider $\delta_0, s_0$ for which the lemma holds and set $s_5=\min\{s_0,s_4\}$, $\delta_5=\min\{\delta_0,\delta_4\}$. Decrease $s_5$ if necessary so that $q_1s_5\leq \delta_5.$

  Having all the constants properly defined, we can now finish the proof.
  Set
  $$T_0=\sup\{T\,\,|\,\; \tilde\Theta^s_{t}(x,r)\leq
  3/2+\varepsilon_0\quad\mbox{for all } x\in B_{K_0}(0),\; r^2\leq
  \tau_1, \;  t\leq T\}.$$
  It suffices  to show 
  that $T_0 \geq \delta_5$ for every  $s\leq s_5$. Remark \ref{remark} 1) implies that $T_0\geq q_1s$.
  
  Suppose that $T_0<\delta_5$ and set $T=T_0/a$. Lemma
  \ref{integral} implies the existence of $T \leq t_1\leq T_0$ so that
  $$\int_{\tilde \gamma^s_{t_1}\cap B_{R_1}}|\vec k-x^{\bot}|^2d\H^1\leq \beta.$$
  
  We now check that Corollary \ref{unique} can be applied with $\sigma$ being $\tilde \gamma^s_{t_1}$. Conditions i) and ii) are trivially satisfied. For every $0\leq t\leq t_1$ set 
  $$\hat \sigma_t=\tilde F^s_t(\gamma^s\cap B_{R_1+L}(0)).$$
  During the proof of Lemma \ref{graphical} we chose $r_2$ so that 
  $$\tilde \Theta^s_t(x,r)\leq 1+\varepsilon_0$$
  for every $r\leq 2$ and $x$ in $A(r_2,(s+t)^{-1/8})$. This implies that all the triple junctions of $\hat \sigma_t$ are inside $B_{r_3}(0)$. Lemma \ref{graphical} implies that the boundary points of $\hat \sigma_t$ lie outside $B_{R_1}(0)$, and so condition iii) b) is met. Condition iii) a)  holds because  Remark \ref{remark} 3) implies that, for every $x$ in $B_{R_1}(0)$, $r^2\leq \tau_1$, and $t\leq t_1$
    $$\tilde\Theta^s_t(x,r)\leq 3/2+\varepsilon_0,$$
   and so  Lemma \ref{junction} can be applied with $R=R_1$. Condition iii) c) is satisfied because of Lemma \ref{asymptotic}. Condition iii) d) is not immediately satisfied because $\hat \sigma_0$  coincides with part of $(2s)^{-1/2}\gamma^s$ instead of $\psi$. Nonetheless, using hypothesis H3) and picking $s_5$ smaller if necessary, one can extend the family $(\hat \sigma_t)_{0\leq t\leq t_1}$ so that condition iii) d) indeed holds.

  Therefore, we get from Corollary \ref{unique} that
    $\tilde \gamma^s_{t_1}$ is $\varepsilon$-close
    in  $C^{1,\alpha}(B_{R_1}(0))$ to $\psi$.
    Denote by $(\hat \gamma^s_l)_{l\geq 0}$ the solution to network
    flow with initial condition $\tilde \gamma^s_{t_1}$. A simple
    computation shows  that
    $$\hat \gamma^s_l=\sqrt{1+2l}\tilde \gamma^s_{t_1+l\lambda^2},$$
    where $\lambda^2=2(s+t_1).$ Applying Lemma \ref{shortime} we
    conclude that for every $l\leq q_1$
    $$\tilde\Theta^s_{t_1+l\lambda^2}(x,r)=\hat
    \Theta^s_{l}(\sqrt{1+2l}x,\sqrt{1+2l}r)\leq 3/2+\varepsilon_0$$
    provided 
    $$\sqrt{1+2l}|x|\leq R_1-1 \quad\mbox{and}\quad(1+2l)r^2\leq
    q_1.$$
    Hence, for all $t_1\leq t\leq t_1(1+2q_1),$
    $$\tilde\Theta^s_{t}(x,r)\leq 3/2+\varepsilon_0$$
    for every $x$ in $B_{K_0}(0)$ and $r^2\leq \tau_1$, which implies that
  $T_0\geq t_1(1+2q_1)$. This is a contradiction because
  $$t_1(1+2q_1)\geq T(1+2q_1)=T_0(1+2q_1)/a>T_0.$$
\end{proof}

\section{Omitted proofs from section \ref{sec:main theorem}}\label{proofs}

We prove the various lemmas used in the previous section.\\[-2ex]

\begin{proof}[Proof of Lemma \ref{density1}] We start by showing the existence of $K_0$ so that for every $y_0$ in $\R^2$ with $|y_0|\geq K_0$ and $\lambda >0$
  $$\int_{\lambda\left( \gamma^s\cap B_3(0)\right)}\Phi (y_0,1)d\H^1\leq 3/2+\varepsilon_0/2.$$ 
 
  We argue by contradiction and assume the existence of $y_i$ tending to infinity, $\lambda_i$, and $s_i$,  for which 
  \begin{equation}\label{bad}
  	\int_{\lambda_i\left( \gamma^{s_i}\cap B_3(0)\right)}\Phi (y_i,1)d\H^1\geq 3/2+\varepsilon_0/2.
	\end{equation}
	The first remark is that $(\lambda_i)_{i\in \N}$ has to be an unbounded sequence, because for some universal constant $C$ and for all $i$ sufficiently large
	$$\int_{\lambda_i\left( \gamma^{s_i}\cap B_3(0)\right)}\Phi (y_i,1)d\H^1\leq C\lambda_i\exp(-|y_i|^2/8+C\lambda_i^2)\H^1(\gamma^{s_i}\cap B_3(0)).$$
	The second remark is that  from hypothesis H3) and H4) it follows the existence of $D_4$ depending only on $\psi, r_0$ and $D_3$, so that on $\gamma^s\cap B_3(0)$
  $$|\vec k|\leq D_4\left(1+s^{-1/2}e^{-\frac{|x|^2}{4s}}\right)$$
  and thus, setting $\sigma_i=\lambda_i\gamma^{s_i}$ and $l_i=\lambda^2_is_i$, we have on $\sigma_i \cap B_{3\lambda_i}(0)$
  $$|\vec k|\leq D_4\left(\lambda^{-1}_i+{l_i}^{-1/2}e^{-\frac{|x|^2}{4l_i}}\right).$$
  Because $y_i$ is tending to infinity, it is easy to recognize that the curvature goes to zero uniformly on compact sets centered around $y_i$. As a result, $\sigma_i-y_i$ converges to either a line, or a union of half-lines. We first note that one only needs to consider the case $\lim_{i\rightarrow \infty} l_i = \infty$, otherwise H3) and $|y_i|\rightarrow \infty$ yield a contradiction to \eqref{bad}. Furthermore, all the triple junctions of $\sigma_i$ are inside a ball of radius proportional to $l_i^{1/2}$ and the shortest distance between them is also proportional to $l_i^{1/2}$. Hence, because $y_i$ is getting arbitrarily large, we see that $\sigma_i-y_i$ converges to a either  plane or a union of three half-lines. This contradicts inequality \eqref{bad}.

    Hypothesis H1) ensures us that we can choose $\delta_1$ so that for every $x_0$ in $B_1(0)$ and  $l\leq 2\delta_1$
    $$\int_{\gamma^s\setminus B_3(0)}\Phi(x_0,l)d\H^1\leq \varepsilon_0/2.$$
    The monotonicity formula implies that for $r^2, t\leq \delta_1$.
    \begin{multline*}
      \Theta^s_t(x_0,r)\leq \int_{\gamma^s}\Phi(x_0,{r^2+t})d\H^1\\
      =\int_{\gamma^s\setminus
        B_3(0)}\Phi(x_0,{r^2+t})d\H^1+\int_{\gamma^s\cap B_3(0)}\Phi(x_0,{r^2+t})d\H^1\\
      \leq \varepsilon_0/2+\int_{(r^2+t)^{-1/2}\left(\gamma^s\cap
        B_3(0)\right)}\Phi(x_0/\sqrt{r^2+t},1)d\H^1\\
    \leq 3/2+\varepsilon_0,
    \end{multline*}
    provided $|x_0|\geq K_0\sqrt{r^2+t}$. This proves the first statement.
    
    Pick
    $$\varepsilon=\varepsilon(\psi, \alpha),\quad
    q_1=q_1(\psi, \alpha)$$
    given by Lemma \ref{shortime} and apply this lemma with
    $$\sigma_t=(2s)^{-1/2}\gamma^s_{2st}\quad\mbox{and } R=K_0\sqrt q_1+1.$$
    Note that by hypothesis H3) we can choose $s_1$ so that, for every $s\leq s_1$, $\sigma_0$ is $\varepsilon$-close to $\psi$ in $C^{1,\alpha}(B_R(0))$ and $s_1q_1\leq \delta_1$.
    Scale invariance implies that  for every $s\leq
    s_1$, $r^2\leq t\leq q_1s$, and $x$ in $B_{\sqrt{2sq_1}K_0}(0),$
    $$\Theta^s_t(x,r)\leq 3/2+\varepsilon_0.$$
    This proves the second statement because  the ball  $B_{\sqrt{2sq_1}K_0}(0)$ contains $B_{\sqrt{2t}K_0}(0)$ if $ t\leq q_1s$.
\end{proof}

  \begin{proof}[Proof of Lemma \ref{asymptotic}]
    Set 
    $$l=t(2(s+t))^{-1}\mbox{ and }\sigma^s=(2(s+t))^{-1/2}\gamma^s.$$ 
    Note that $l \leq 1$. Moreover, for $s_2=s_2(r_0)$
    and $\delta_2=\delta_2(r_0)$ small we have that
    $$\sigma^s\cap A\left(r_0,3(s+t)^{-1/8}\right)$$ 
    is graphical over $P\cap A\left(r_0,3(s+t)^{-1/8}\right)$ and  if
    $v_s$ is a function arising from the graphical decomposition then
    \begin{multline*}
    	|v_s(x)|+ |x||{dv_s}/{dx}|+|x|^2\left|d^2v_s/ {dx^2}\right|\\
    \leq D_3 \left( 2(t+s)^{1/2}|x|^2 +\exp(-|x|^2/2)\right),
    \end{multline*}
    which means that, by choosing $s_2=s_2(D_3,r_0)$,  $\delta_2=\delta_2(D_3,r_0)$ small enough and choosing  $r_1=r_1(r_0,D_3)\geq\max\{r_0,1\}$ large enough, we can ensure that
    \begin{equation}\label{bn}
    		|v_s(x)|+ |x||{dv_s}/{dx}|\leq D_3 \left( 2(t+s)^{1/2}|x|^2 +\exp(-|x|^2/2) \right)\leq 1
	\end{equation}
   on $A\left(r_1,3(s+t)^{-1/8}\right)$.
    
      From now on pick
    $$y_0\in \tilde \gamma^s_t\cap A\left(3r_1+1,(s+t)^{-1/8}\right).$$
    From the monotonicity formula we have that 			
    $$1\leq \Theta^s_{0}(y_0(2(s+t))^{1/2}, \sqrt t)= \int_{\sigma^s}\Phi(y_0,l)d\H^1=A+B+C,$$
    where
    \begin{align*}
      A&= \int_{\sigma^s\setminus B_{3(s+t)^{-1/8}}} \Phi(y_0,l)d\H^1,\\
      B&=\int_{\sigma^s \cap B_{r_1}}\Phi(y_0,l)d\H^1,\\
      C&=\int_{\sigma^s \cap A\left(r_1, 3(s+t)^{-1/8}\right)}\Phi(y_0,l)d\H^1.
    \end{align*}

For every $x$ with $|x|\geq 3(s+t)^{-1/8}$, the bounds for $y_0$ imply  that
    $$|x-y_0|^2\geq |x|^2/3+|y_0|^2$$
    and so
    $$\Phi(y_0,l)\leq \sqrt 3\exp(-|y_0|^2/(4 l))\Phi(0,3l).$$
    Thus, we can find $C_1=C_1(D_1)$ for which
    \begin{multline*}
      A= \int_{\sigma^s\setminus B_{3(s+t)^{-1/8}}} \Phi(y_0,l)d\H^1\\
      \leq \sqrt 3\exp(-|y_0|^2/(4 l)) \int_{\sigma^s\setminus
        B_{3(s+t)^{-1/8}}}\Phi(0,3l)d\H^1\\
      \leq C_1\exp(-|y_0|^2/C_1).
    \end{multline*}
    
    To estimate the second term we proceed in the same way. For every
    $|x|\leq r_1$, the bounds for $y_0$ imply that 
    $$|x-y_0|^2\geq |x|^2+|y_0|^2/3	$$
    and so
    $$\Phi(y_0,l)\leq \exp(-|y_0|^2/(12 l))\Phi(0,l)\quad\mbox{for every }|x|\leq  r_1.$$
    Thus, we can find $C_1=C_1(D_1)$ for which
    $$B\leq \exp(-|y_0|^2/(12 l))\int_{\sigma^s\cap
      B_{r_1}}\Phi(0,l)d\H^1\leq C_1\exp(-|y_0|^2/C_1).$$

        Finally, we  estimate the third term.  Denote by $P_i$ the
    half-lines such that $P=\{P_i\}_{i=1}^N$, by $a_i$ the orthogonal projection  of $y_0$ on the line determined by $P_i$, and by $b_i$ the projection 
    fof $y_0$ onto the normal space of $P_i$ so that
      $$\dist(y_0,P)=\min\{|b_i|\}=|b_1|.$$
     Furthermore, denote by $\sigma^s_i$ the component of $$\sigma^s\cap A\left(r_1,3(s+t)^{-1/8}\right)$$ 
    which is graphical over $P_i\cap A\left(r_1,3(s+t)^{-1/8}\right)$ and by $v^i_s$ the correspondent graph function. It is easy to recognise that for $i=2,\ldots,N$, we have
    $|b_i|\geq c|y_0|$, where $c=c(P)$ is some positive constant. Relabel
    $r_1=r_1(r_0,D_3,P)$ such that \eqref{bn} holds with $c/\sqrt{2}$ instead
    of 1 on the right hand side. Hence 
    $$(v^i_s - b_i)^2\geq (c^2/2)|y_0|^2$$
     and so there is $C_1=C_1(D_1,P)$ such that
    \begin{multline*}
        	\int_{\sigma^s_i}\Phi(y_0,l)d\H^1\leq 2(4\pi l)^{-1/2}\int \exp\left(-\frac{(c^2/2)|y_0|^2+(x-a_i)^2}{4l}\right)dx\\
	\leq C_1\exp(-|y_0|^2/C_1).
    \end{multline*}
    
     As a result, we combine all these estimates and obtain that for some $C_1=C_1(D_1,P)$
    $$1\leq \int_{\sigma^s_1}\Phi(y_0,l)d\H^1+C_1\exp(-|y_0|^2/C_1).$$
    We relabel  $r_1$ one last time  and find $r_1=r_1(D_1, D_3, r_0, \psi, \alpha)$ so that 
    $$C_1\exp(-r_1^2/C_1)\leq 1/2.$$
	This combined with H1) implies that there exists a constant $c = c(D_1)$ such that
	$$ 1/4 \leq \int_{\sigma^s_1 \cap B_{c \sqrt{l}}(y_0)}\Phi(y_0,l)d\H^1 \leq 2 \sup_{|x-a_1| \leq c \sqrt{l}}\exp\left(-\frac{|v_s^1-b_1|^2}{4l}\right)\, .$$
	 By \eqref{bn}, the variation of $v^1_s(x)$ over the interval $|x-a_1| \leq c \sqrt{l}$ is $O(\sqrt{l})$ and thus there exists a constant $\tilde{c} = \tilde{c}(D_1,P)$ such that
	 $$\sup_{|x-a_1| \leq c \sqrt{l}} \frac{|v_s^1-b_1|^2}{4l} \leq \tilde{c}\, .$$
	   Therefore, using that $\lambda \leq (1 - \exp(-\lambda)) \exp(\tilde{c})$ for $\lambda \in [0,\tilde{c}]$, we see that we can find $C_1=C_1(D_1,P)$ for which
   \begin{equation*}\begin{split}
   \int_{|x-a_1| \leq c \sqrt{l}}\!\!\!\!\!\!\!\!\!\!\!\!\!\!\!\!\!\!\!\!\!\!& \qquad \frac{(v^1_s-b_1)^2}{4l}\frac{\exp(-(x-a_1)^2(4l)^{-1})}{(4\pi l)^{1/2}}dx\\
	&\leq  C \int_{|x-a_1| \leq c \sqrt{l}}\left(1 - \exp\left(-\frac{(v^1_s-b_1)^2}{4l}\right)\right) \frac{\exp(-(x-a_1)^2(4l)^{-1})}{(4\pi l)^{1/2}}dx\\
	   &\leq C\bigg(\int_{\{x \geq r_1\}}\sqrt{1+(dv^1_s/dx)^2}\,\frac{\exp(-(x-a_1)^2(4l)^{-1})}{(4\pi l)^{1/2}}dx\\
	&\qquad -\int_{\sigma^s_1\setminus B_{r_1}(0)}\Phi(y_0,l)d\H^1\bigg)\\
	   &\leq C\left(\int_{\{x \geq r_1\}}\sqrt{1+(dv^1_s/dx)^2}\frac{\exp(-(x-a_1)^2(4l)^{-1})}{(4\pi l)^{1/2}}dx-1\right)\\
	   &\quad +C_1\exp(-|y_0|^2/C_1)\\
	  &\leq \int_{\{x \geq r_1\}} C(dv^1_s/dx)^2\frac{\exp(-(x-a_1)^2(4l)^{-1})}{(4\pi l)^{1/2}}dx+C_1\exp(-|y_0|^2/C_1)
   \end{split}\end{equation*}

    and thus
   \begin{equation*}\begin{split}
   b_1^2 &\leq C_1\int_{\{x \geq r_1\}}\left( (v^1_s)^2+(dv^1_s/dx)^2\right)\frac{\exp(-(x-a_1)^2(4l)^{-1})}{(4\pi l)^{1/2}}dx\\
   &\quad +C_1\exp(-|y_0|^2/C_1).
   \end{split}\end{equation*}
    
    We observe that $|a_1|\geq c|y_0|$ for some constant $c=c(P)$  and that for every $0\leq l\leq 1$ we have 
    $$\frac{(x+a)^2}{2}+\frac{x^2}{4l}\geq \frac{a^2}{8}+\frac{x^2}{8l}.$$
    Thus, we obtain from \eqref{bn}  that, for some constant $C_1=C_1(D_1, D_3, P)$,
    \begin{multline*}
    \int\ (dv^1_s/dx)^2\frac{\exp(-(x-a_1)^2(4l)^{-1})}{(4\pi l)^{1/2}}dx\\
    \leq D_3 \int \left(\sqrt{s+t}|x|+\exp(-x^2/2)\right)\frac{\exp(-(x-a_1)^2(4l)^{-1})}{(4\pi l)^{1/2}}dx\\
    \leq C_1\sqrt{s+t}+D_3\int\exp(-x^2/2) \frac{\exp(-(x-a_1)^2(4l)^{-1})}{(4\pi l)^{1/2}}dx\\
    \leq C_1\sqrt{s+t}+D_3\int\exp(-(x+a_1)^2/2) \frac{\exp(-x^2(4l)^{-1})}{(4\pi l)^{1/2}}dx\\
    \leq C_1\sqrt{s+t}+D_3\exp(-a_1^2/8)\int \frac{\exp(-x^2(8l)^{-1})}{(4\pi l)^{1/2}}dx\\
    \leq C_1\sqrt{s+t}+C_1\exp{(-|y_0|^2/C_1)}.
    \end{multline*}
    The same type of estimate holds for the term 
    $$\int\ (v^1_s)^2 \frac{\exp(-(x-a_1)^2(4l)^{-1})}{(4\pi l)^{1/2}}dx$$
 and so we can choose $s_2$  and $\delta_2$  both depending on $D_1,D_3,\psi, r_0, \alpha,$ and $\nu$, such that  for every 
    $s\leq s_2$ and $t\leq \delta_2$ we have
    $$b_1=\dist(y_0,P)\leq \nu+C_1\exp(-|y_0|^2/C_1).$$

We now show that, by relabeling $r_1$, $s_2$, and $\delta_2$  if necessary, we also have
    $$\tilde\Theta^s_t(y_0,r)\leq 1+\varepsilon_0/2+\nu$$
    for every $r\leq 1$. The argument is almost  identical to what we have just done and so we will just point out the differences.
    We keep the same notation and assumptions. 
    Arguing in the very same way as we did before, we obtain the existence of $C_1=C_1(D_1, D_3,P)$ and  $r_1=r_1(D_1, D_3, r_0, \psi, \alpha)$ for which
    \begin{equation*}\begin{split}
    		&\tilde\Theta^s_t(y_0,r)\leq \int_{\sigma^s}\Phi(y_0,l+r^2)d\H^1\\
		&\leq  \int_{\sigma^s_1}\Phi(y_0,l+r^2)d\H^1+C_1\exp(-|y_0|^2/C_1)\\
		&\leq \int\sqrt{1+(dv^1_s/dx)^2}\frac{\exp\left(-(x-a_1)^2(4(l+r^2))^{-1}\right)}{(4\pi (l+r^2))^{1/2}}dx\\
		&\qquad+C_1\exp(-|y_0|^2/C_1)
		\end{split}
\end{equation*}
 \begin{equation*}\begin{split}
		&\leq 1+C_1  \int\ |dv^1_s/dx| \frac{\exp\left(-(x-a_1)^2(4(l+r^2))^{-1}\right)}{(4\pi (l+r^2))^{1/2}}dx\\
		&\qquad+C_1\exp(-|y_0|^2/C_1)\\
		&\leq  1+C_1\sqrt{s+t}
	 +C_1 \int \exp(-x^2/2) \frac{\exp\left(-(x-a_1)^2(4(l+r^2))^{-1}\right)}{(4\pi (l+r^2))^{1/2}}dx\\
		&\qquad +C_1\exp(-|y_0|^2/C_1).
\end{split}
\end{equation*}

Using the fact that $0\leq l\le 1$ we obtain
 \begin{equation*}\begin{split}
  &\tilde\Theta^s_t(y_0,r) \leq  1\\
  &\qquad +C_1\sqrt{s+t} +C_1 \int \exp(-(x+a_1)^2/2) \frac{\exp\left(-x^2(4(l+r^2))^{-1}\right)}{(4\pi (l+r^2))^{1/2}}dx\\
 &\qquad +C_1\exp(-|y_0|^2/C_1)\\
  &\leq1+C_1\sqrt{s+t}+C_1\exp(-a_1^2/40) \int\frac{\exp\left(-x^2(8(l+r^2))^{-1}\right)}{(4\pi (l+r^2))^{1/2}}dx\\
  & \qquad +C_1\exp(-|y_0|^2/C_1)\\
  &\leq 1+C_1\sqrt{s+t}+C_1\exp(-|y_0|^2/C_1)\\
  &\leq 1+\varepsilon_0/2+C_1\sqrt{s+t}.
\end{split}
\end{equation*}

Thus, like before, we can choose $s_2$, $\delta_2$ for which the result holds.
\end{proof}	

\begin{proof}[Proof of Lemma \ref{graphical}]
  From scale invariance and applying Lemma \ref{asymptotic} with
  $\nu=\varepsilon/2$, we can find $r_2\geq 1,$ $\delta_3$, and $s_3$
  such that if $t\leq \delta_3,$ and $s\leq s_3,$ then
  $$\Theta^s_t(x,r)\leq 1+\varepsilon_0$$
  whenever $r\leq2(2(s+t))^{1/2}$ and
  $$x\in A\left(r_2(2(s+t))^{1/2},(2(s+t))^{1/2}(s+t)^{-1/8}\right).$$
  
  Hence, from White's regularity Theorem \cite{White05}, we obtain the existence of a universal constant $C$ for which
  $$\left|\frac{d F_t^s}{dt}(p)\right|=|\vec k|\leq C t^{-1/2}$$
  whenever
  $$F_t^s(p)\in A\left(3r_2(2(s+t))^{1/2}/2,3(2(s+t))^{1/2}(s+t)^{-1/8}/4\right).$$
  Choosing a larger $r_2$ (depending on C and the previous $r_2$) and $\delta_3, s_3$ smaller if necessary, we obtain  after integrating the previous inequality that
  $$|F_t^s(p)-F_0^s(p)|\leq 2C\sqrt t$$
  whenever
  $$F_0^s(p)\in A\left(r_2(2(s+t))^{1/2},(2(s+t))^{1/2}(s+t)^{-1/8}/2\right).$$
  This finishes the proof.
\end{proof}

\begin{lemm}
  \label{thetabetaestimate}
There exists $\delta_5>0$, s.t. for $0<s,t<\delta_5$ it holds that
\begin{equation}
  |\vec{k}(x)|+|\theta^s_t(x)|+|\beta^s_t(x)| \leq D_4 \qquad \forall\, x \in
  \gamma^s_t\cap A(1/3,3)
\end{equation}

\end{lemm}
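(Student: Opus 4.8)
The goal is a uniform $C^0$ bound on curvature, on the turning angle $\theta^s_t$, and on the primitive $\beta^s_t$ of the Liouville form, valid on the annular region $A(1/3,3)$ for the rescaled-in-time flows $\gamma^s_t$, for all small $s,t$.

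My thinking: This lemma sits right after Lemma \ref{graphical} and before whatever uses it. The region $A(1/3,3)$ is an annulus bounded away from the origin — precisely the regime where the earlier "far from origin / proximity to $P$" estimates apply. So the natural strategy is to combine the Gaussian-density control from Lemma \ref{asymptotic} (with $\nu$ small) with White's local regularity theorem to get the curvature bound, and then integrate the geometric identities $\vec k = J\nabla\theta^s_t$ and the defining relations for $\theta$ and $\beta$ to upgrade this to bounds on $\theta^s_t$ and $\beta^s_t$.

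**Plan.** First I would establish the curvature bound. On $A(1/3,3)$ the constant $K_0\sqrt{2t}$ is eventually smaller than $1/3$ for $t\le\delta_5$, so Lemma \ref{density1} (far-from-origin) and Lemma \ref{asymptotic} give Gaussian density ratios below $3/2+\varepsilon_0<\Theta_{S^1}$ at all relevant scales on a slightly enlarged annulus, say $A(1/4,4)$. White's local regularity theorem \cite{White05}, exactly as invoked in the proof of Lemma \ref{graphical}, then yields $|\vec k|(x)\le C t^{-1/2}$; but I actually want a bound independent of $t$. The point is that on the fixed annulus $A(1/3,3)$ — which is bounded away from the origin where all the concentration happens — the natural parabolic scale is order one rather than order $\sqrt t$, so applying White's theorem at unit scale on $A(1/3,3)$ (using that the flow has existed for a definite backward time once $t$ is comparable to or larger than the scale) gives $|\vec k|\le D_4$ with $D_4$ depending only on the structural constants, not on $t$.

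**Bounding $\theta^s_t$.** Using $\vec k = J\nabla\theta^s_t$, I have $|\nabla\theta^s_t| = |\vec k|\le D_4$ on $A(1/3,3)$. The function $\theta^s_t$ is continuous across triple junctions (this is the key construction in Section \ref{monotonicity}), so $\theta^s_t$ is uniformly Lipschitz along the network within the annulus. To pin down the additive constant, I would use hypothesis H3)/H4): the rescaled networks $\tilde\gamma^s_t$ are $C^{1,\alpha}$-close to the self-expander $\psi$ (via Lemma \ref{asymptotic} and the graphicality over $P$), so on the graphical ends the tangent direction is controlled by the known asymptotics of $\psi$, fixing $\theta^s_t$ up to a bounded error. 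Integrating the gradient bound along branches of bounded length within $A(1/3,3)$ then gives $|\theta^s_t|\le D_4$.

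**Bounding $\beta^s_t$.** From $d\beta^s_t = \lambda_{\gamma^s_t} = (x\,dy - y\,dx)|_{\gamma^s_t}$, the gradient of $\beta^s_t$ is bounded by $|x|\le 3$ on the annulus, so $\beta^s_t$ is again uniformly Lipschitz along branches; as with $\theta$, continuity across triple junctions and the length bounds from H1) give control of the oscillation, while the graphical description on the ends (where $\beta$ is explicitly computable from $\psi$'s Liouville primitive, as used in H3)) fixes the constant. I expect the main obstacle to be the $t$-independence of the curvature bound: the crude application of White gives only $C t^{-1/2}$, and I need to argue carefully that on an annulus separated from the origin the relevant scale is order one, for instance by comparing with a blow-down argument or by re-centering White's theorem at unit scale once the backward existence time is definite — this is the step where the separation $|x|\ge 1/3$ from the singular core must be exploited, rather than the parabolic scaling that is natural near the origin.
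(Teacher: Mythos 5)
Your route to the curvature bound (Gaussian density control from Lemmas \ref{density1} and \ref{asymptotic} plus White's theorem) differs from the paper's, and you have correctly located its weak point but not repaired it. For small $t$ the flow has only existed for a backward time of order $t$, so White's theorem applied at unit scale on $A(1/3,3)$ has no parabolic cylinder of definite size to work in; "re-centering once the backward existence time is definite" simply does not apply in the regime $t\to 0$, which is exactly the regime the lemma is about. The only way to get a $t$-independent bound down to $t=0$ is to feed in the regularity of the \emph{initial} data, and that is what the paper does: H4) gives that $\gamma^s\cap A(1/8,8)$ is a graph over the half-lines $P$ with uniformly small $C^2$-norm, the pseudolocality result Theorem \ref{thm:graph_local} propagates graphicality with small gradient to $A(1/6,6)$ for $0\le t\le\delta_5$ (this already bounds $\theta^s_t$, using continuity in $t$ to fix the branch of the angle), and the interior estimates of Ecker--Huisken \cite{EckerHuisken91} for graphical flows then give the uniform $C^2$, hence curvature, bound on $A(1/5,5)$. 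Your argument would have to be supplemented by essentially this input, so it is not an independent alternative.

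The bound on $\beta^s_t$ has a genuine gap. The primitive $\beta^s_t$ is only defined up to an additive constant, and that constant is \emph{time-dependent}: it is chosen precisely so that the evolution equation $\tfrac{d}{dt}\beta_t=\langle X,Jx\rangle-2\theta_t$ holds (see the proof of the first lemma in Section \ref{monotonicity}). Hypothesis H3) normalizes $\beta^s$ only at $t=0$; for $t>0$ the constant is whatever the time evolution produces, and no spatial computation at time $t$ (such as comparing with the Liouville primitive of $\psi$ on the graphical ends) can recover it. Your argument controls only the oscillation of $\beta^s_t$ over the annulus via $|\nabla\beta^s_t|\le|x|\le 3$, not its value. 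The paper closes this by reparametrizing locally on the triple-junction-free annulus so that $X^T=0$, hence $X=\vec k$ is bounded there, and then integrating the resulting differential inequality $\bigl|\tfrac{d}{dt}\beta^s_t\bigr|\le C+2|\beta^s_t|$ in time (Gronwall) from the H4)-controlled initial value. Some version of this time integration is unavoidable; without it the stated bound on $|\beta^s_t|$ does not follow.
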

\begin{proof}
  By assumption H4) the estimate is true for $t=0$ and $s$
  sufficiently small. H4) furthermore implies that for $s$ sufficiently
  small, each component of $\gamma^s\cap A(1/8,8)$ is a graph, uniformly
  small $C^2$-norm over a half-line $P$. By Theorem \ref{thm:graph_local}
  this implies that there exists $\delta_5>0$ such that
  $\gamma_t^s\cap A(1/6,6)$ remains a graph with small gradient over
  $P$ for $0\leq t \leq \delta_5$.
 This already implies the first two estimates of the statement,
since $\theta^s_t$ is continuous in $t$.

 The estimates of Ecker and Huisken, \cite{EckerHuisken91}, for graphical
mean curvature flow then imply that
$$\gamma^s_t \cap A(1/5,5)$$
remains is a graph over $P$ with small $C^2$-norm for $0\leq t \leq
\delta_5$.
Let $(N^s_t)_{0\leq t \leq T}$ be a smooth parametrization of
the evolving network. Since $\gamma^s_t\cap A(1/5,5)$ is free of
triple junctions we can locally reparametrize $(N^s_t)_{0\leq t \leq
  \min(T,\delta_5)}$ such that 
$$\Big(\frac{\partial}{\partial t} N\Big)^T = X^T = 0$$
on $ A(1/4,4)$. Since $X = \vec{k}$, we have by the evolution equation
for $\beta_t$ that,
$$\Big|\frac{d}{dt} \beta_t\Big| \leq |\langle X, Jx\rangle| +2
|\theta_t| \leq C \, .$$
Decreasing $\delta_5$ further if necessary, this implies the second
part of statement. 
\end{proof}

\begin{proof}[Proof of Lemma \ref{integral}]
  Set $T_0=R^2(aT+s)+aT$. During this proof $C$ denotes a constant
  which is allowed to depend also on $a, R$, and  $q$ (but not $T$ and
  $s$). We have from the localized
  monotonicity 
  formula applied to $2(s+t)\theta^s+\beta^s$ (see Lemma \ref{thm:evol.local}) that
  \begin{multline*}
    \frac{1}{(a-1)T}\int^{aT}_{T}\int_{\tilde \gamma^s_t\cap
      B_R(0)}|\vec k-x^{\bot}|^2d\H^1dt \\
    =\frac{1}{(a-1)T}\int^{aT}_{T}(2(s+t))^{-3/2}\int_{\gamma^s_t\cap
      B_{R\sqrt{2(s+t)}}(0)}|2(s+t)\vec k-x^{\bot}|^2d\H^1dt\\
    =\frac{1}{(a-1)T}\int^{aT}_{T}(2(s+t))^{-3/2}\int_{\gamma^s_t\cap
      B_{R\sqrt{2(s+t)}}(0)}|\nabla(2(s+t)\theta^s+\beta^s)|^2d\H^1dt\\ \displaybreak[3]
    \leq
    \frac{C}{T}\int^{aT}_{T}(s+t)^{-3/2}(T_0-t)^{1/2}\int_{\gamma^s_t
    }\varphi |
    \nabla(2(s+t)\theta^s+\beta^s)|^2\rho(0,T_0-t)d\H^1dt\\ \displaybreak[3]
    \leq\frac{C}{T}(s+T)^{-3/2}(T_0-T)^{1/2}\int_{\gamma^s_T
    }\varphi(2(s+T)\theta^s+\beta^s)^2\rho(0,T_0-T)d\H^1\\
    +
    \frac{C}{T}(s+T)^{-3/2}(T_0-T)^{1/2}\int_T^{aT}\!\!\int_{\gamma^s_t\cap
      A(2,3)} (2(s+t)\theta^s_t+\beta^s_t)^2\rho(0,T_0-t)\, d\H^1dt\\ 
    \leq\frac{C}{T}(s+T)^{-3/2}(T_0-T)^{1/2}\int_{\gamma^s_0
    }\varphi(2s\theta^s+\beta^s)^2\rho(0,T_0)d\H^1\\ \displaybreak[3]
    +
    \frac{C}{T}(s+T)^{-3/2}(T_0-T)^{1/2}\int_0^{aT}\!\!\int_{\gamma^s_t\cap
      A(2,3)} (2(s+t)\theta^s_t+\beta^s_t)^2\rho(0,T_0-t)\, d\H^1dt\\
\leq\frac{C}{T(s+T)}\int_{\gamma^s_0
    }\varphi(2s\theta^s+\beta^s)^2\rho(0,T_0)d\H^1\\
    +
    \frac{C}{T(s+T)}\int_0^{aT}\int_{\gamma^s_t\cap
      A(2,3)} (2(s+t)\theta^s_t+\beta^s_t)^2\rho(0,T_0-t)\, d\H^1dt \\
    =A+B\, ,\\
  \end{multline*}
For the second term, using Lemma \ref{thetabetaestimate},  we have
  \begin{multline*}
    B\leq \frac{C ((s+aT)+1)}{T(s+T)}\int_0^{aT}\int_{\gamma^s_t\cap
      A(2,3)}|x|^4\rho(0,T_0-t)d\H^1dt\\
    \leq \frac{C ((s+aT)+1)}{T(s+T)}\int_0^{aT}(T_0-t)^2\int_{(T_0-t)^{-1/2}(\gamma^s_t\cap
      A(2,3))}\!\!\!\!|x|^4\rho(0,1)d\H^1dt\\
       \leq \frac{C ((s+aT)+1)T_0^3}{T(s+T)}\sup_{0<t<aT}
    \int_{(T_0-t)^{-1/2}(\gamma^s_t\cap
      A(2,3))}|x|^4e^{-|x|^2/4}d\H^1\\
       \leq CT_0\sup_{0<t<aT}
    \int_{(T_0-t)^{-1/2}(\gamma^s_t\cap
      A(2,3))}|x|^4e^{-|x|^2/4}d\H^1\ .
      \end{multline*}
  Note that $$T_0\leq R^2\delta_0(a+1/q)+a\delta_0$$
  and so we can choose $\delta_0$ small enough so that $B\leq \beta/2$.
  
  We now estimate the first term. Recall that if $\beta$ is primitive for the Liouville form on the network $\gamma$, then $\beta_l=l^{-2}\beta $ is primitive for the Liouville form on $l^{-1}\gamma$. Set
  $$\lambda=\frac{s}{T+s}\quad\mbox{ and }\quad l=\sqrt{2(T+s)}.$$
  Then
  \begin{align*}
    A & \leq\frac{C}{T(s+T)}\int_{\gamma^s\cap B_3}(2s\theta^s+\beta^s)^2\rho(0,T_0)d\H^1\\
    &=\frac{C(s+T)}{T}\int_{l^{-1}\left(\gamma^s\cap B_3\right)}
    (\lambda\theta^s+\beta_l^s)^2\rho(0,l^{-2}T_0)d\H^1\\
    &\leq C \int_{l^{-1}\left(\gamma^s\cap
        B_3\right)}(\lambda\theta^s+\beta_l^s)^2\rho(0,l^{-2}T_0)d\H^1,
  \end{align*}
  where the last equality follows because $T\geq qs$. 
  Consider
  $$F(T,s)=\int_{l^{-1}\left(\gamma^s\cap
      B_3\right)}(\lambda\theta^s+\beta_l^s)^2\rho(0,l^{-2}T_0)d\H^1,$$
      where we remark the existence of a constant $C$ (independent of $T$ and
  $s$) such that $$C^{-1}\leq l^{-2}T_0\leq C.$$ 
  Given any $\beta_1$ small it is enough to show the existence of $s_0$ and $\delta_0$ so that if
  $qs \leq T\leq \delta_0$ { and } $s\leq s_0$ then
  $$F(T,s)\leq \beta_1.$$

  We now argue by contradiction and  assume the existence of $s_i$ and
  $T_i$  (with $qs_i\leq T_i$) converging to zero for which
  $F(T_i,s_i)\geq \beta_1$. We also assume that $l_i^{-2}T_0$ converges to $T_1$.
  
  Suppose first that $\lambda_i$ (as defined above) has a subsequence
  converging to a positive number $\lambda.$ In that case
  $$l_i^{-1}\gamma^{s_i}= \lambda^{1/2}_i \tilde \gamma^{s_i}$$
  converges in $C^{1,\alpha}_{\l}$ to $\lambda^{1/2}\psi$. Hypothesis H3) implies that
  $$\lim_i F(T_i,s_i)\leq \lim_i \lambda_i^2 \int_{\tilde \gamma^{s_i}}(\theta^s+\tilde \beta^{s_i})^2\rho(0,l_i^{-2}\lambda^{-1}_{i}T_0)d\H^1=0.$$
  Suppose now that  $\lambda_i$  has a subsequence converging to zero.
  It follows at once that
  $$\lim_i \int_{l_i^{-1}\left(\gamma^{s_i}\cap B_{r_0\sqrt
        s_i}\right)}(\lambda_i\theta^{s_i}+\beta_{l_i}^{s_i})^2\rho(0,l_i^{-2}T_0)d\H^1=0.$$
  Note that by hypothesis H4)
  $$l_i^{-1}\gamma^{s_i}\cap A(r_0(\lambda_i/2)^{1/2}, 3 l_i^{-1})$$
  is graphical over $P$ and  if $v_i$  is the function arising from
  the graphical decomposition of $l_i^{-1}\gamma^{s_i}$ then
  $$|v_i(x)|+ |x||{dv_i}/{dx}|+|x|^2\left|d^2v_i/ {dx^2}\right| \leq D_3 \left(l_i |x|^2+(\lambda_i)^{1/2}e^{-\frac{|x|^2}{2\lambda_i}}\right).$$
  Therefore, we have that
  \begin{equation}\label{gradient}
  |\nabla \beta_{l_i}^{s_i}|=|x^{\bot}|=\frac{|xv'_i-v_i|}{\sqrt{1+(v'_i)^2}}\leq D_3 \left(l_i |x|^2+(\lambda_i)^{1/2}\right).
  \end{equation}
  We now argue that for any connected component of 
  $$l_i^{-1}\gamma^{s_i}\cap A(r_0(\lambda_i/2)^{1/2}, 3 l_i^{-1})$$ there is  $x_i$ converging to zero for which
 $b_i=\beta_{l_i}^{s_i}(x_i)$ also converges to zero. From hypothesis  H3)  we see that any connected components of $\tilde \psi^{s_i}\cap A(2r_0, 3r_0)$ contains  $y_i$ such that
 $$\lim_{i}(\theta^{s_i}(y_i)+\tilde\beta^{s_i}(y_i))=0.$$
 Setting $x_i=\lambda_i y_i$, it is simple to see that $b_i=\lambda_i\tilde \beta^{s_i}(y_i)$ tends to zero.
 
 Therefore, we can use gradient estimate \eqref{gradient} and the
 graphical decomposition  to conclude  the existence of a  constant $C$
 independent of $i$ such that
 $$|\beta_{l_i}^{s_i}(x)|\leq C\left(l_i|x|^3+(\lambda_i)^{1/2}|x|\right)+b_i\quad\mbox{ on } \quad A(r_0(\lambda_i/2)^{1/2}, 3 l_i^{-1}).$$
 Hence,
 \begin{multline*}
   \lim_i F(T_i,s_i)  =\lim_i \int_{l_i^{-1}\gamma^{s_i}\cap
     A(r_0(\lambda_i/2)^{1/2}, 3
     l_i^{-1})}(\lambda_i\theta^{s_i}+\beta_{l_i}^{s_i})^2\rho(0,l_i^{-2}T_0)d\H^1\\
    = \lim_i \int_{l_i^{-1}\gamma^{s_i}\cap A(r_0(\lambda_i/2)^{1/2},
     3 l_i^{-1})}(\beta_{l_i}^{s_i})^2\rho(0,l_i^{-2}T_0)d\H^1\\
   \leq \lim_i C(l_i^2+\lambda_i+b_i^2) \int_{l_i^{-1}\gamma^{s_i}} (|x|^6+|x|^2+1)\rho(0,l_i^{-2}T_0)d\H^1=0.
 \end{multline*}
 This is a contradiction.
 \end{proof}

\section{Short-time existence}\label{sec:short time ex}
We will show in this section that one can glue a scaled self-expander
at scale $s$
into the initial network around a non-regular multiple point to obtain
a sequence of regular networks $\gamma^s$ which satisfy the hypotheses
H1)-H4) in section \ref{sec:main theorem}. We will show that combining Theorem
\ref{main} and \ref{thm:locreg.2} then proves short time existence
of the network flow for non-regular initial networks, Theorem
\ref{thm:short time ex}.

We will first discuss the question of short-time existence for regular
networks with unbounded branches. By the definition of a regular
network there exists an $R_0>0$ such that outside of $B_{R_0}$ the
initial network $\gamma_0$ consists of a finite number of non-compact
 branches $\gamma_0^i, \ i=1,\ldots,n$ which can be written as graphs
 over corresponding half-lines $P_i$. Since the curvature of
 $\gamma_0$ is bounded and the $\gamma_0^i$ approach the half-lines
 $P_i$ at infinity, we assume that each $\gamma_0^i$ can be written as
 a normal graph over $P_i$ with small $C^1$-norm. We define the points
 $q^{i}_k:=\gamma_0^i\cap \partial B_k$ for $k\geq k_0>R_0;\, k,k_0 \in \N$. By the
 results in \cite{MNTNetworks, Mantegazza04} there exists a maximal
 solution $(\gamma_{k,t})_{0\leq t<T_k}$ of the network flow, starting at
 $\gamma_0\cap B_k$  with fixed endpoints $q^i_k$. Using Proposition \ref{thm:locreg} in $B_{k_0}$ and Theorem
 \ref{thm:graph_local} to control the boundary points as well as
 estimates of Ecker and Huisken for graphical mean curvature flow to
 control the parts outside of $B_{k_0}$ to see that there is $T>0$
 such that $T_k\geq T$ for all $k\geq k_0$, together with uniform
 estimates on the curvature. We thus take a limit $k\rightarrow
 \infty$ to obtain a solution to the network flow, starting at $\gamma_0$. 

Now let $\gamma$ be a non-regular initial network with bounded
curvature. For simplicity let us assume that $\gamma$ has only one
non-regular multiple point at the origin.

If the multiple point consists only of two branches meeting at an
angle different than $\pi$, then smoothing the cone point and using estimates for
graphical mean curvature flow, see
for example the proof of Lemma \ref{thetabetaestimate}, one easily
constructs a solution starting at $\gamma$ as claimed in Theorem
\ref{thm:short time ex}.

So we can assume that at the origin at least three branches meet, and let
$T_j, \ j=1, \ldots, n,$ be the exterior unit normals. We denote with 
$$P_j = \{-t T_j\, | \ t\geq 0\}$$
be the corresponding half-lines. Since $\gamma$ has bounded curvature,
we can assume, by scaling $\gamma$ if necessary, that $\gamma \cap
B_5$ consists of $n$ branches $\gamma_j$ corresponding to the
half-lines $P_j$; and if $\theta_j$ is the angle that $P_j$ makes with the $x$-axis, there
is a function $u^j$ such that $\gamma_j$ can be parametrized as
$$\gamma_j = \{ x e^{i\theta_j}+u^j(x) e^{i(\theta_j + \pi/2)}\, |\, 0\leq
x \leq 5\}\, .$$
Note that the assumption that $\gamma$ has bounded curvature implies
\begin{equation}
  \label{eq:shorttime.1}
  |u^j(x)|\leq C x^2\qquad \text{and}\qquad \Big|\frac{d}{dx}
  u^j(x)\Big| \leq C x\, .
\end{equation}
 In \cite{SchnuererSchulze_2007} it was shown that for $n=3$ there
 exists a unique tree-like self-expander $\psi$ asymptotic to $P:=\cup_{j=1}^nP_j$. In
 the case $n>3$ the existence of tree-like, connected self-expanders was shown by Mazzeo-Saez
 \cite{MazzeoSaez07}. Note that Lemma \ref{decay} gives the
 asymptotics of $\psi$ outside a large ball $B_{r_0}$. 

We now aim to glue $\psi_s:=\sqrt{2s}\cdot\psi$ into $\gamma$ to get a
family satisfying the conditions H1)-H4). Let $v^j_s$ be the graph
function corresponding to the branch $\psi^j_s$ on $A(r_0 \sqrt{2s},
4)$. By Lemma \ref{decay} we have the estimate
\begin{equation}
  \label{eq:shorttime.2}\begin{split}
  &|v^j_s| \leq (2s)^{1/2} C e^{-x^2/4s}\ ,\ |(v^j_s)'| \leq x^{-1}
  (2s)^{1/2} C e^{-x^2/4s}\ ,\\ &|(v^j_s)''| \leq
  (2s)^{-1/2} C  e^{-x^2/4s}\ .
\end{split}
\end{equation}
 Let $\varphi:\R^+ \rightarrow [0,1]$ be a cut-off function
 s.t. $\varphi =1$ on $[0,1]$ and $\varphi = 0$ on $[2,\infty)$. We
 define $\gamma^s$ via the graph function $u^j_s$ in the gluing region $A(r_0 \sqrt{2s},
4)$ by
$$u^j_s:= \varphi(s^{-1/4}x)v^j_s(x) + (1-\varphi(s^{-1/4}x))u^j(x)\,
.$$
It can easily be checked that $\gamma^s$ satisfies the assumptions
H1)--H3). From \eqref{eq:shorttime.1} and \eqref{eq:shorttime.2} we see
that
$$|u^j_s| \leq C\Big(x^2+ (2s)^{1/2} e^{-x^2/4s}\Big)\ .$$
Furthermore 
\begin{equation*}\begin{split}
(u^j_s)' &=
s^{-1/4}\varphi'(s^{-1/4}x)v^j_s(x)+\varphi(s^{-1/4}x)(v^j_s)'(x)\\
&\ \ \ -s^{-1/4}\varphi'(s^{-1/4}x)u^j(x)+(1-\varphi(s^{-1/4}x))(u^j)'(x)\
. 
\end{split}\end{equation*}
We have $x^{-1} \leq s^{-1/4}\leq 2x^{-1}$ on $\{\varphi'(s^{-1/4}x)
\neq 0\}$ and so we can estimate
\begin{equation*}\begin{split}
x|(u^j_s)'| & \leq C(|v^j_s(x)|+ x|(v^j_s)'(x)|+|u^j(x)|+x
|(u^j)'(x)|\\
& \leq C\Big(x^2+ (2s)^{1/2} e^{-x^2/4s}\Big)\, .
\end{split}\end{equation*}
The estimate for $(u^j_s)''$ follows similarly, which shows that also
H4) is satisfied.

\begin{proof}[Proof of Theorem \ref{thm:short time ex}]
As discussed at the beginning of this section
there exits a smooth solution to the network flow $(\gamma^s_t)_{0\leq
  t \leq T_s}$ for some $T_s>0$. We now aim to show that there exists a $T_0>0$ such
that $T_s\geq T_0$ for all $s>0$ and that there are time interior
estimates on $k$ and all its higher derivatives for all positive
times, independent of $s$. 

Using Theorem \ref{thm:graph_local} and interior estimates for higher
derivatives of the curvature we see that we can pick a smooth family
of points $P_j(t,s) \in
\gamma_s^j\cap A(1/3,1/2)$ such that assumptions \eqref{eq:locreg.7}
and \eqref{eq:locreg.8} are satisfied, with constants independent of
$s$ for $0\leq t< \min\{T_s,\delta\}$, where $\delta>0$ does not depend on $s$. Then Proposition \ref{thm:locreg} gives
estimates on the curvature and its derivatives,
independent of $s$ on $\R^2\setminus B_{1/2}\times (0,\min\{T_s,\delta\})$. 

To get the desired estimates on $B_{1/2}$ we aim to apply Theorem
\ref{main} and Theorem \ref{thm:locreg.2}. Fix $\ve_0>0$ such
that $3/2+\ve_0 < \Theta_{S^1}$, and let $s_1,\delta_1,\tau_1$ be
determined by Theorem \ref{main}. 

Pick  $0<t_0<\min\{T_s,\delta_1,\delta \}$ and $x_0 \in B_{1/2}$. Let
$\rho:=(t_0/2)^{1/2}$. Note that $B_\rho(x_0)\subset B_1$. Theorem
\ref{main} then implies that the Gaussian density ratios
$$
\Theta(x,t,r)\leq 3/2 + \ve_0$$
for all $(x,t) \in B_\rho(x_0)\times (t_0-\rho^2,t_0)$ and $r \leq
\sqrt{\tau_1} \rho$. Thus by Theorem \ref{thm:locreg.2} with $\sigma
=1$, there exists $C$, depending only on $\epsilon_0, \tau_1$ such
that
$$ |k|(x_0,t_0) \leq \frac{C}{t_0^{1/2}}\ ,$$
together with the corresponding estimates on all higher
derivatives. By Remark \ref{rem:locreg.2} there is a $\kappa>0$,
depending only on $\epsilon_0, \tau_1$ such that the length of the
shortest segment is bounded from below by $\kappa \cdot t_0^{1/2}$.

Together with the estimate on $(\R^2\setminus B_{1/2})\times (0,\min\{T_s,\delta\})$
this implies that $T_s\geq T_0:=\min\{\delta, \delta_1\}$. By the estimates
on the curvature, which are independent of $s$ we can take a
subsequential limit of the flows $(\gamma^s_t)_{0<t<\bar{T}}$ as $s
\rightarrow 0$ to obtain a limiting flow $(\gamma_t)_{0<t<\bar{T}}$
starting at the non-regular network $\gamma$.

Note that by Theorem \ref{thm:graph_local} and the interior estimates of
Ecker/Huisken,  away from any triple and multiple point, the flow
$(\gamma_t)_{0<t<\bar{T}}$ attains the initial network $\gamma$ in
$C^\infty$. Furthermore by the above estimate in $B_1$
and Proposition \ref{thm:locreg} we have
$$ |k| \leq \frac{C}{t^{1/2}}\ .$$
The estimate on the length of the shortest segment passes to the limit
as well.
\end{proof}
\section{Local Regularity}\label{sec:loc reg}

\noindent In this section we will prove some local regularity results for the
network flow.
\subsection{Integral estimates}
We will need to localize the 
integral estimates in the work of Mantegazza, Novaga and Tortorelli
\cite{MNTNetworks}. In the following we will
outline what modifications of the original proofs are needed to obtain
the estimates in the local case. The setup is as follows.

Let $(\gamma_t)_{t\in [0,T)}$ be a regular, smooth solution of the network flow on
$\R^2$. Let $N:\gamma_0\times [0,T) \rightarrow \R^2$ be a smooth and regular
parametrization of the flow. We denote the tangential
component of the deformation vector by
\begin{equation}\label{eq:locreg.1}
\vec{\lambda} = X^T\, . 
\end{equation}
As defined before we denote with $T_j$ the exterior unit tangent
vector induced by each $\sigma_j$ at each triple point. We then define
\begin{equation}\label{eq:locreg.2}
 k_i = \langle \vec{k}_i, JT_i \rangle \ \ \ \text{and} \ \ \
\lambda_i = \langle \vec{\lambda}, T_i \rangle\, .
\end{equation}
The balancing condition at each triple point then implies
\begin{equation}
  \label{eq:locreg.3}
  k_1+k_2+k_3 = 0 \ \ \text{and}\ \ \lambda_1+\lambda_2+\lambda_3 =
  0\, .
\end{equation}
We would like to point out that our setup differs to the one in
\cite{MNTNetworks} in that we do not want to prescribe the tangential
component $\vec{\lambda}$ of the deformation vector. If one aims to
prove a short-time existence result, one has to specify the tangential
velocity. Nevertheless, and this is important in the following
discussion, the integral estimates on the curvature and higher
derivatives of the curvature of the evolving network do
not depend on the choice of tangential velocity. Another
point is that the calculations in \cite{MNTNetworks} are done only for
a network consisting of three curves, meeting at one common triple
point, and with three fixed endpoints. As already mentioned there, see
Remark 3.24 in \cite{MNTNetworks}, these calculations generalize
without any changes to networks with more than one triple point, but
with fixed endpoints.
In the following we will explain how to generalize these estimates to 
networks with arbitrary tangential speed, more than one triple point
and any number of moving endpoints.

We assume that along each segment $\sigma_i$ we have fixed an
orientation and thus the unit tangent vector field $\tau$ along is
well defined. Note that at each endpoint $p$ of $\sigma_i$ we have
$$\tau(p)=\pm T_i\, ,$$
depending on the chosen orientation. We fix the unit normal vector
field $\nu$ along $N$ by requiring that
$$ J\tau = \nu\, .$$
This convention implies that the curvature $k$ of $N$ is given by
$$k= \langle \vec{k},\nu\rangle = \langle \partial_s \tau,\nu\rangle =
-\langle \partial_s \nu,\tau\rangle\, ,$$
where $s$ is the arc-length parameter along $\sigma_i(t)$. Similarly
we define
$$\lambda = \langle X, \tau \rangle =\langle \vec{\lambda}, \tau \rangle\, .$$
 Note that
this implies again that at an endpoint $p$ of $\sigma_i$ it holds that
$$k(p)=\pm k_i \ \ \ \text{and}\ \ \ \lambda(p)= \pm \lambda_i\, .$$
It then can be easily checked that the evolution equations for $\tau,
\nu$ and $k$ do not depend on the choice of the tangential speed
$\vec{\lambda}$ and are given by
\begin{align}
  \label{eq:locreg.4} \partial_t\tau &= (\langle \nabla k,\tau\rangle
  + k \lambda)\nu\\[0.5ex]
\label{eq:locreg.5}\partial_t \nu &= -(\langle \nabla k,\tau\rangle+k\lambda)\tau\\[0.5ex]
\label{eq:locreg.6}\partial_t k &= \Delta k+ \langle \nabla
k,\vec{\lambda}\rangle  + k^3\, ,
\end{align}
see $(2.4)$, $(2.5)$ and $(2.6)$ in \cite{MNTNetworks}. Furthermore
the estimates and relations between the curvatures $k_i$ and the
tangential speeds $\lambda_i$ at a
triple point, following $(2.6)$ until $(2.10)$ in \cite{MNTNetworks},
remain valid. As well the evolution equation for higher derivatives of
the curvature and the relations between time and spacial derivatives given
in Lemma 3.7 and the calculus rules in Remark 3.9 in
\cite{MNTNetworks} are not affected. 

This ensures that all the calculations for integrals of the curvature
and its derivatives are identical up to contributions from the
boundary points. To control the influence of the boundary points we
make the following assumption.\\[2ex]
{\bf Assumption:} We assume that the evolving network
$(\gamma_t)_{t\in [0,T)} $ has boundary points $Q_l(t)$, where $l =
1,\ldots, N$. We assume that these boundary points are all disjoint
 and at each of this points it holds that
\begin{equation}
  \label{eq:locreg.7}
  X^T|_{(P(t),t)} = \vec{\lambda}(Q_l(t),t) = 0\, .
\end{equation}
for all $t\in [0,T)$. Furthermore we assume that there are positive constants
$C_j$ such that
\begin{equation}
  \label{eq:locreg.8}
   \sup_{l\in \{1,\ldots, N\}}|\nabla^jk|\big|_{(Q_l(t),t)} \leq C_j 
\end{equation}
for all $j=0,1,\ldots, j_0$, where $j_0 \in \N$, $t\in [0,T)$.\\[2ex]
With this assumption the additional terms in the evolution of the
integral of the square of $\partial_s^jk$ can be controlled. To
demonstrate this, and for the reader's convenience, we will do this calculation explicitly, compare with
$(3.4)$ in \cite{MNTNetworks}. 
\begin{equation}
  \label{eq:locreg.9}
  \begin{split}
    \frac{d}{dt}\int_{\gamma_t} |&\nabla^jk|^2ds = 2
    \int_{\gamma_t} \nabla^jk\, \partial_t \nabla^jk\, ds\\
    &\ \ \  +
    \int_{\gamma_t} |\nabla^jk|^2(\text{div}(\vec{\lambda}) - k^2)\, ds\\
    &= 2 \int_{\gamma_t} \nabla^jk\,  \Delta\nabla^{j}k +
      \nabla_{\vec{\lambda}}\nabla^{j}k\, \nabla^jk\, ds\\
&\ \ \  +   \int_{\gamma_t} \mathfrak{p}_{j+3}(\nabla^jk)\,\nabla^jk
\, ds
+\int_{\gamma_t} |\nabla^jk|^2(\text{div}(\vec{\lambda}) - k^2)\, ds
\\ 
&= -2\int_{\gamma_t} |\nabla^{j+1}k|^2ds +
\int_{\gamma_t} \text{div}(\vec{\lambda}|\nabla^jk|^2)\, ds\\
&\ \ \ + \int_{\gamma_t} \mathfrak{p}_{2j+4}(\nabla^jk)\, ds +
\sum_\text{3-points}\sum_{i=1}^3\langle T_i,\nabla|\nabla^jk|^2\rangle\bigg|_\text{3-point}\\
&\ \ \ +\sum_{l=1}^N\langle T_l,\nabla|\nabla^jk|^2\rangle\bigg|_\text{$Q_l$}\\
&= -2\int_{\gamma_t} |\nabla^{j+1}k|^2ds + \int_{\gamma_t}
\mathfrak{p}_{2j+4}(\nabla^jk)\, ds\\
&\ \ \  
\sum_\text{3-points}\sum_{i=1}^3\langle
T_i,\nabla|\nabla^jk|^2\rangle + \lambda_i| 
\nabla^jk|^2\bigg|_\text{3-point}\\
&\ \ \   +\sum_{l=1}^N\langle
T_l,\nabla|\nabla^jk|^2\rangle\bigg|_{Q_l}\, .
  \end{split}
\end{equation}
In the special case $j=0$ one gets
\begin{equation*}\begin{split}
 \frac{d}{dt}\int_{\gamma_t} &k^2ds = -2\int_{\gamma_t} |\nabla k|^2ds
 + \int_{\gamma_t}k^4\, ds \\
&\ \ \ \ + 
\sum_\text{3-points}\sum_{i=1}^3\langle
T_i,\nabla(k^2)\rangle + \lambda_ik^2\bigg|_\text{3-point}
+\sum_{l=1}^N\langle T_l,\nabla(k^2)\rangle\bigg|_\text{$Q_l$}\, .
\end{split}\end{equation*}
The relations at the triple points, see $(2.10)$ in \cite{MNTNetworks},
imply that at each triple point
\[\sum_{i=1}^3 2\langle
T_i,\nabla(k^2)\rangle + \lambda_ik^2\bigg|_\text{3-point} = 0\ .\]
Thus the order of differentiation at the triple point can lowered by
one order, and one gets
\begin{equation}\label{eq:locreg.10}
\begin{split}
 \frac{d}{dt}\int_{\gamma_t} k^2ds &= -2\int_{\gamma_t} |\nabla k|^2ds
 + \int_{\gamma_t}k^4\, ds \\
&\ \ \ \ - 
\sum_\text{3-points}\sum_{i=1}^3\lambda_ik^2\bigg|_\text{3-point}
+\sum_{l=1}^N\langle T_l,\nabla(k^2)\rangle\bigg|_\text{$Q_l$}\, .
\end{split}\end{equation}
Following verbatim the computations in \cite{MNTNetworks} on can use
interpolation inequalities for $L^p$-norms of $k$ and higher
derivatives of $k$ to absorb the term
$$\int_{\gamma_t}k^4\, ds$$
and the boundary terms at the triple-points. Note that the
contributions at the boundary points $Q_l$ are bounded by
$NC_0C_1$. This leads to the estimate, compare $(3.10)$ in
\cite{MNTNetworks},
\begin{equation}
  \label{eq:locreg.11}
   \frac{d}{dt}\int_{\gamma_t} k^2ds \leq C \bigg(1 + \int_{\gamma_t}k^2\,
   ds\bigg)^3\, ,
\end{equation}
where $C$ depends only on a  bound for the inverses of the
lengths of the segments the evolving network and $NC_0C_1$. This
inequality implies that the $L^2$-norm of $k$ cannot grow to quickly.
It can be furthermore shown that an estimate for the $L^2$-norm of
every even derivative $\nabla^jk$ is true, which depends only on the
$L^2$-norm of $k$, a  bound for the inverses of the
lengths of the segments of the evolving network and
$NC_jC_{j+1}$. Compare here the proof of Proposition 3.13 in
\cite{MNTNetworks}. 

A bound for the inverses of the
lengths of the segments $l(\sigma_i)$ of the evolving network,
depending on the initial network and $\int k^2$ is also true. Note that since at the endpoints $Q_l$ we have
$\lambda_l=0$ there is no extra contribution there. As in the proof of
Proposition 3.15 in \cite{MNTNetworks} one obtains
\begin{equation}\label{odek2linv}
\frac{d}{dt}\bigg(1 + \int_{\gamma_t}k^2 + \sum_i \frac{1}{l(\sigma_i)}\bigg)
\leq C\bigg(1 + \int_{\gamma_t}k^2 + \sum_i
\frac{1}{l(\sigma_i)}\bigg)^3 
\end{equation}
where $C$ depends only on $NC_0C_1$. Thus also the length of the
shortest segment remains bounded from below for a short time. Thus
there exists a $T_0>0$, depending only $L^2$-norm of the curvature of
$\gamma_0$, the inverses of the lengths of the segments of $\gamma_0$,
and $N, C_1, C_2$ such that on $[0,T_0]$ the $L^2$-norm of $k$ and the
inverse of the length of the shortest segment remains uniformly bounded.

To obtain estimates for higher derivatives of $k$ which are interior
in time, Mantegazza, Novaga and Tortorelli look, for $j$ even, at the evolution of
integrals of the form, 
\[\int_{\gamma_t} k^2 + \frac{t}{2!} |\nabla k|^2 + \cdots
+\frac{t^j}{j!}|\nabla^jk|^2\, ds\ . \]
By \eqref{eq:locreg.9} we get for the time derivative of such a quantity
in our case only the additional boundary term
\[\sum_{l=1}^N\sum_{i=0}^j\frac{t^i}{i!}\langle
T_l,\nabla|\nabla^ik|^2\rangle\bigg|_\text{$Q_l$}\ ,\]
which by our assumption \eqref{eq:locreg.8} is bounded on finite time
intervals. So arguing as in \cite{MNTNetworks} we obtain, compare
p.\,273 there, that on $[0, T_0]$
\[\int_{\gamma_t} k^2 + \frac{t}{2!}|\nabla k|^2 + \cdots
+\frac{t^j}{j!}|\nabla^jk|^2\, ds \leq \tilde{C
}_j\ .\]
Here the constants $\tilde{C}_j$ depend only on the $L^2$-norm of the
curvature of $\gamma_0$, the inverses of the lengths of the segments
of $\gamma_0$ and the constants $C_1, \ldots , C_{j+1}$. Using
interpolation inequalities, see Remark 3.12 in \cite{MNTNetworks}, we
can thus state the following Proposition.

\begin{prop}
  \label{thm:locreg}
Let $(\gamma_t)_{t\in [0,T)}$ be a smooth solution to the network flow,
with $N$ endpoints, satisfying the assumptions \eqref{eq:locreg.7} and
\eqref{eq:locreg.8}. Then there exists $T_0>0$, depending only on the
$L^2$-norm of the curvature of
$\gamma_0$, the inverses of the lengths of the segments of $\gamma_0$,
and $N, C_1, C_2$ such that for all $0<t<\min\{T, T_0\}$ it holds fo
all $j>0$ that
\[ |\nabla^j k | \leq \hat{C}_j \cdot t^{-\frac{j}{2}-\frac{1}{4}}\, ,\]
where $\hat{C}_j$ depends only on the
$L^2$-norm of the curvature of
$\gamma_0$, the inverses of the lengths of the segments of $\gamma_0$,
$N$ and the constants $C_1, \ldots , C_{j+1}$.
\end{prop}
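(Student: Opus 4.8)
The plan is to assemble the localized integral estimates derived above into three steps: an ODE-comparison argument that produces the existence time $T_0$ together with a curvature bound and a segment-length lower bound, a time-weighted Bernstein-type estimate for the higher derivatives, and finally a one-dimensional interpolation to pass from $L^2$ to $L^\infty$.

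First I would fix $T_0$. The quantity $g(t) = 1 + \int_{\gamma_t} k^2\,ds + \sum_i l(\sigma_i)^{-1}$ satisfies the cubic differential inequality \eqref{odek2linv}, whose coefficient depends only on $NC_0C_1$; comparison with the solution of $\dot y = Cy^3$ shows that $g$ cannot blow up before a time $T_0$ determined by $g(0)$ and this coefficient, that is, by the $L^2$-norm of the curvature of $\gamma_0$, the inverse segment lengths of $\gamma_0$, and $N, C_1, C_2$. On $[0,\min\{T,T_0\}]$ this yields a uniform bound on $\int_{\gamma_t}k^2\,ds$ and a uniform positive lower bound on the length of every segment. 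Here the assumption \eqref{eq:locreg.7}, $\lambda_l = 0$ at the endpoints, is exactly what removes the otherwise uncontrolled tangential endpoint contribution in passing from \eqref{eq:locreg.10} to \eqref{odek2linv}.

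Next I would run the time-weighted estimates. For each $j$ consider $E_j(t) = \int_{\gamma_t}\sum_{i=0}^{j}\tfrac{t^i}{i!}|\nabla^i k|^2\,ds$ and differentiate using \eqref{eq:locreg.9}. The Bernstein telescoping is the key algebra: the coefficient derivative $\tfrac{t^{i-1}}{(i-1)!}\int|\nabla^i k|^2$ of the $i$-th summand is dominated by the dissipation $-2\tfrac{t^{i-1}}{(i-1)!}\int|\nabla^i k|^2$ produced by the $(i-1)$-th summand, so all top-order interior terms carry a good sign. The polynomial terms $\mathfrak{p}_{2i+4}(\nabla^i k)$ are absorbed using the remaining dissipation together with the $L^2$-curvature bound and the segment-length lower bound through the interpolation inequalities of \cite{MNTNetworks}; the triple-point contributions are handled exactly as in the reduction from \eqref{eq:locreg.9} to \eqref{eq:locreg.10}, using the relations $(2.10)$ of \cite{MNTNetworks} and \eqref{eq:locreg.3} to lower the order of differentiation. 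The only genuinely new terms are the endpoint contributions $\sum_l \tfrac{t^i}{i!}\langle T_l,\nabla|\nabla^i k|^2\rangle\big|_{Q_l} = \sum_l \tfrac{t^i}{i!}\,2\,\nabla^i k\,\nabla^{i+1}k\big|_{Q_l}$, which by \eqref{eq:locreg.8} are bounded by $C_iC_{i+1}$, while the tangential endpoint term again drops out because $\lambda_l = 0$. Integrating gives $E_j(t)\le \tilde C_j$ on $[0,T_0]$, and in particular $\|\nabla^j k\|_{L^2(\gamma_t)}^2 \le \tilde C_j\,j!\,t^{-j}$.

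Finally I would interpolate. On each segment, whose length is bounded below uniformly, the one-dimensional inequality gives $\|\nabla^j k\|_{L^\infty}^2 \le c\big(\|\nabla^j k\|_{L^2}\|\nabla^{j+1}k\|_{L^2} + l(\sigma_i)^{-1}\|\nabla^j k\|_{L^2}^2\big)$. Inserting $\|\nabla^j k\|_{L^2}\le Ct^{-j/2}$ and $\|\nabla^{j+1}k\|_{L^2}\le Ct^{-(j+1)/2}$ yields $\|\nabla^j k\|_{L^\infty}^2 \le Ct^{-(2j+1)/2}$ for small $t$, that is $|\nabla^j k|\le \hat C_j\,t^{-j/2-1/4}$, which is the asserted bound; tracking which $\tilde C_i$ enter (see Remark 3.12 of \cite{MNTNetworks}) gives the stated dependence of $\hat C_j$ on $C_1,\dots,C_{j+1}$. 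I expect the main obstacle to be precisely this boundary bookkeeping: confirming that at every triple point each contribution either cancels by the balancing relations or reduces to a lower-order term as in \eqref{eq:locreg.10}, while at the moving endpoints the vanishing of $\lambda_l$ and the a priori bounds \eqref{eq:locreg.8} render all new terms harmless. Once this is in place, the argument is a faithful localization of the computations in \cite{MNTNetworks}.
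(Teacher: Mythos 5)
Your proposal is correct and follows essentially the same route as the paper: the cubic ODE \eqref{odek2linv} to fix $T_0$ and control $\int k^2$ together with the segment lengths, the time-weighted quantities $\int\sum_i \tfrac{t^i}{i!}|\nabla^i k|^2\,ds$ with the telescoping of the coefficient derivatives against the dissipation, the reduction of triple-point terms via the balancing relations, the endpoint terms controlled by \eqref{eq:locreg.7} and \eqref{eq:locreg.8}, and the final one-dimensional interpolation giving the exponent $-j/2-1/4$. The only detail the paper records that you omit is that the time-weighted estimate is run for $j$ even (as in \cite{MNTNetworks}), which does not affect the conclusion.
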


\subsection{Generalized self-similarly shrinking networks}
In the following we define a {\it degenerate regular
  network}. It can be seen as a $C^1$-limit of regular networks, where
it is allowed that the lengths of some segments go to zero.

\begin{defi}[Degenerate regular network] We consider a connected graph $G$ consisting of a finite number of edges $e_i,\
1\leq i \leq N_1$ and vertices $v_j, 1\leq j \leq N_2$. We assume that the
edges are either homeomorphic to the interval $[0,1]$, with two boundary
points, or homeomorphic to $[0,\infty)$, with one boundary point. We assume that
at the vertices always three such boundary points meet. We furthermore
assume that there exists a continuous map  $\Psi: G \rightarrow T\R^2, x \mapsto
(\Psi(x), \Psi^\prime(x))$ such that if $e_i$ is homeomorphic to a
finite interval, then either
\begin{itemize}
\item[i)] $\Psi$ restricted to $e_i$ is the smooth, regular parametrization of  
 a curve in $\R^2$ up to the endpoints, with self-intersections possibly only at the
 endpoints, or
\item[ii)] $\Psi$ is degenerate, i.e. it maps to a fixed point $(p, v) \in T_pR^2$ with
  $|v|=1$, for some $p \in \R^2$.
\end{itemize}
If the edge is homeomorphic to a half-line we assume the first
case. At each vertex we assume that the three tangent directions of
the curves meeting there form $120$-degree angles.  We call $(G,\Psi)$ as
above a {\it degenerate regular network} if there exists a sequence of
homeomorphisms $\Psi_k:G\rightarrow \R^2$ as above, such that $\Psi_k \rightarrow \Psi$ in $C^1$,
where we assume that the $\Psi_k$ are actually  embeddings,
i.e. $\Psi_k(G)$ are regular networks. If one or several edges
are mapped under $\Psi$ to a single point $p$ in $\R^2$, we call this
sub-network the core at $p$. Note that for a degenerate regular
network, the core at a point $p$ is always a connected sub-network.
\end{defi}

We call $(G,\Psi)$ a
generalized self-similarly shrinking network, if $\Psi$ is a degenerate
embedding in the sense above, and $\Psi|_{e_i}$ satisfies the
self-shrinker equation
$$\vec{k} = - \frac{x^\perp}{2} ,$$
for all $1\leq i \leq N_1$. The evolving self-similar solution for
$t\in (-\infty, 0)$ is then given by
$$N_t = \sqrt{-t}\Psi\ .$$
The most basic example of a generalized self-similarly shrinking
solution with triple points is the union of three half-lines, meeting
at the origin under a 120-degree condition. We will call this solution
the {\it standard triod}.

The following Lemma is from \cite{HaettenschweilerDiplom07}, for the
convenience of the reader we give the proof in here.

\begin{lemm}[H\"attenschweiler] Let $(G,\Psi)$ be a generalized self-similarly shrinking network,
such that $G$ is a tree. Then $\Psi(G)$ consists of half-lines
emanating from the origin, with possibly a core at the origin.
\end{lemm}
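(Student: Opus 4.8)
The plan is to reduce everything to a single conserved quantity along each non-degenerate edge and then exploit the tree structure through a simple ``peeling'' induction.

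First I would record the Frenet relations for a self-shrinker. Writing $\tau,\nu$ for the unit tangent and normal, $k=\langle\vec k,\nu\rangle$, and $u=\langle x,\nu\rangle$, the equation $\vec k=-x^\perp/2$ gives $k=-u/2$, while $\partial_s\nu=-k\tau$ yields $\partial_s u=-k\langle x,\tau\rangle=\tfrac{u}{2}\langle x,\tau\rangle$ and $\partial_s(|x|^2/4)=\tfrac12\langle x,\tau\rangle$. Combining these,
$$\partial_s\big(u\,e^{-|x|^2/4}\big)=\Big(\partial_s u-\tfrac{u}{2}\langle x,\tau\rangle\Big)e^{-|x|^2/4}=0,$$
so $m_e:=\langle x,\nu\rangle\,e^{-|x|^2/4}$ is constant on each non-degenerate edge $e$. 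If $m_e=0$ then $u\equiv0$, hence $\vec k\equiv0$ and $\langle x,\nu\rangle\equiv0$, i.e. $e$ lies on a line through the origin (a \emph{radial} edge). If $m_e\neq0$ then $u=m_e\,e^{|x|^2/4}$ never vanishes, so the edge is strictly convex; since $|u|\le|x|$ this forces $|m_e|e^{|x|^2/4}\le|x|$, bounding $|x|$ on $e$. Thus every curved edge lies in a fixed ball, and in particular every non-compact edge must have $m_e=0$ and be a radial ray.

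Next I would use balancing to set up the peeling. At each vertex $T_1+T_2+T_3=0$, and since $\nu_i=JT_i$ this gives $\sum_i\langle x_v,\nu_i\rangle=\langle x_v,J\sum_iT_i\rangle=0$, a linear relation $\sum_i\pm m_{e_i}=0$ among the edge constants; summing this over all vertices of a core shows the $m$-values of the non-degenerate edges leaving a core sum to zero as well. The non-degenerate compact edges form a subtree, and I would induct by peeling its leaves: a leaf vertex carries exactly one non-degenerate compact edge, its other incident edges being radial half-lines or previously peeled edges, all with $m=0$, so the vanishing-sum relation forces $m_e=0$ on the peeled edge. Running the induction to the end shows $u\equiv0$ on every edge, i.e. every non-degenerate edge is radial.

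Finally I would read off the geometry. A genuine triple point $v\neq0$ would have its three radial edges all on lines through $0$ and through $v$, hence on the single line through $0$ and $v$, contradicting the $120^\circ$ condition; so all triple points are at the origin. Likewise the radial edges adjacent to a core lie on lines through the origin and through the core point $p$, so once two distinct directions occur one gets $p=0$, the remaining collinear case being excluded by connectedness. Hence $\Psi(G)$ is a union of half-lines from the origin with the core, if any, at the origin. The main obstacle is ruling out the curved (Abresch--Langer type) edges: the conserved quantity only classifies an edge as radial or as a bounded convex arc, and such arcs genuinely exist as shrinkers (they close up into Abresch--Langer curves). The tree hypothesis is precisely what excludes them, since the vanishing-sum relation together with leaf-peeling on the subtree of compact edges leaves no room for a nonzero $m_e$, whereas on a graph with a loop the same linear system admits nontrivial solutions. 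I expect the bookkeeping at the cores (degenerate edges) to be the most delicate point, handled by treating each core as a single balanced super-vertex.
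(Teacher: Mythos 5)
Your argument is correct, and it is genuinely different from the one in the paper. The paper's proof leans on the Abresch--Langer classification of shrinking curves (a non-linear arc avoids the origin, stays bounded, and winds monotonically about the origin) and then runs a topological argument: it tracks the radial function $R(\theta)$ along curved arcs, argues that the winding direction propagates through each junction, and closes up a non-contractible loop, contradicting the tree hypothesis. You replace both ingredients. The first integral $m_e=\langle x,\nu\rangle e^{-|x|^2/4}$ stands in for the Abresch--Langer classification --- it is in fact the same first integral that underlies it, but you use it directly: $m_e=0$ forces a radial edge, $m_e\neq 0$ forces boundedness, so non-compact edges are rays. The Kirchhoff-type relation $\sum_i \pm m_{e_i}=0$ at each vertex (and, after summation, at each core) together with leaf-peeling on the subforest of compact non-degenerate edges stands in for the winding/loop construction; this is exactly the statement that a tree carries no nonzero divergence-free flow. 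The two proofs are in fact dual: the sign of $m_e$ \emph{is} the winding direction, and your vertex relation is the algebraic content of the paper's claim that ``there is always another curve continuing with the same winding direction.'' Your version is more self-contained (no appeal to the classification result) and makes explicit the two points the paper glosses over, namely why the winding direction survives passage through a core and why the signs cooperate; the paper's version is more geometric and makes visible what fails without the tree hypothesis (a bounded complementary region). Two small points to tighten: the peeling needs the incidence signs in $\sum_i\pm m_{e_i}=0$ to flip between the two endpoints of each edge (they do, since the exterior tangent is $-\tau$ at one end and $+\tau$ at the other, so this is genuinely a flow condition --- worth saying); and the exclusion of the ``all edges at a core collinear'' case is not really connectedness but the $120^\circ$ condition at a leaf vertex of the core, which always supplies two non-degenerate edges meeting at $120^\circ$ and hence two distinct radial directions.
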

\begin{proof} First note that any non-degenerate self-similar
  shrinking curve is a member of the one-parameter
 family of curves classified by Abresch and Langer in
  \cite{AbreschLanger}. Their classification result implies the
  following. If the curve contains the origin, then it is a
  straight line through the origin. Otherwise 
  it is contained in a compact subset of $\R^2$, but it is still
  diffeomorphic to a line. In the latter case, any
  such curve has a constant winding direction with respect to the
  origin. Aside from the circle, any other solution has a countable,
  non-vanishing  number of self-intersections.

  Let us consider $\Gamma^\prime
  \subset \Gamma := \psi(G)$, which consists of $\Gamma$ with all
  half-lines going to infinity removed. For $\theta \in \mathbb{S}^1$ let
  $S(\theta)$ be the half-line, emanating from the origin in direction of
  $\theta$. Consider
  $$R(\theta):=\sup\{|x|\, | x \in \Gamma^\prime\cap S(\theta)\}\ .$$
  If $\Gamma^\prime$ is not only the core at the origin, there exists a
  $\phi_0$ such that $R(\phi_0) = |S(\phi)\cap \gamma_i| >0$, where
$\gamma_i$ is a non-degenerate curve of $\Psi$. Since the $\gamma_i$'s don't
change their winding direction we have
$$R(\phi)=|\gamma_i(\phi)|$$ for all $\phi \in \{\phi\ | \
\gamma_i\cap S(\phi) \neq \emptyset\}$. At an endpoint of $\gamma_i$ we
have $R(\cdot)>0$ otherwise $\gamma$ would have been a half-line,
starting at the origin. At this endpoint, also if it has a core, there is
always another $\gamma_{i^\prime}$ which continues smoothly with the
same winding direction, $R(\phi)$ stays positive and
$$R(\phi)=|\gamma_{i^\prime}(\phi)|$$ for all $\phi \in \{\phi\ | \
\gamma_{i^\prime}\cap S(\phi) \neq \emptyset\}$. This also implies
that $R(\phi)$ is continuous. Continuing until $\phi$ reaches again
$\phi_0$ we find a closed non-contractible loop in $\Gamma^\prime$,
which yields a contradiction.
\end{proof}

Let us assume that $(\gamma_t)_{0\leq t < T}$ is a network
flow. Huisken's monotonicity formula implies that the function
$$\Theta_{x_0,t_0}(t):=\int_{\gamma_t}\rho_{x_0,t_0}\, ds$$
is decreasing in time for $t<t_0$, and the limit
$\Theta(x_0,t_0):=\lim_{t\nearrow t_0}\Theta_{x_0,t_0}(t)$ is the
Gaussian density at $(x_0,t_0)$. The function
$\Theta_{x_0,t_0}(t)$ is constant in time if and only if the evolving network
is a self-similarly shrinking network, centered at the space-time
point $(x_0,t_0)$. 

The Gaussian density of the shrinking sphere can easily computed to be
$$\Theta_{S^1}=\sqrt{\frac{2\pi}{e}}\, .$$ 
Note that $\Theta_{S^1}>3/2$. For a generalized self-similar shrinking
network $\gamma$ we denote $\Theta_\gamma:=\int_\gamma
\rho_{0,0}(\cdot,-1)\, ds$.

\begin{lemm}
  \label{thm:densitybound} Let $\gamma$ be a generalized
  self-similarly shrinking network and assume that
  $\Theta_{\gamma}<\Theta_{S^1}$. Then $\gamma$ is tree-like, and thus
  either a multiplicity one line, or the standard triod. 
\end{lemm}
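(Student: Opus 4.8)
The plan is to reduce everything to the single statement that $\gamma$ must be \emph{tree-like}, i.e.\ that $\R^2\setminus\gamma$ has no bounded component, and then to read off the two possible shapes from the preceding Lemma of H\"attenschweiler together with the density bound. So suppose first that $\gamma$ is tree-like. We may assume the abstract graph $G$ is a tree, since any cycle of $G$ not visible in $\gamma$ must lie in the connected core and its redundant degenerate edges can be deleted without changing $\Psi(G)$. H\"attenschweiler's Lemma then shows that $\Psi(G)$ is a union of half-lines emanating from the origin, possibly with a core at the origin. The core has zero length and contributes nothing to $\Theta_\gamma$, while each half-line $L$ through the origin contributes exactly
\[
\frac{1}{\sqrt{4\pi}}\int_0^\infty e^{-r^2/4}\,dr=\tfrac12 .
\]
Hence if $\gamma$ consists of $m$ half-lines then $\Theta_\gamma=m/2$, and the hypothesis $\Theta_\gamma<\Theta_{S^1}=\sqrt{2\pi/e}<2$ forces $m\le 3$. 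A single half-line is impossible because its endpoint at the origin would have to be a vertex of three edges; $m=2$ is compatible with the $120$-degree vertex condition only if the two half-lines are collinear, giving a multiplicity one line; and $m=3$ with the angle condition gives exactly the standard triod. This settles the ``and thus'' part.

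It therefore remains to prove that $\gamma$ is tree-like, and here I argue by contradiction. Suppose $\R^2\setminus\gamma$ has a bounded component $U$. Then $\partial U$ is a closed loop $C\subseteq\gamma$ built from finitely many non-degenerate branches meeting at $120$-degree junctions, and each branch satisfies $\vec k=-x^{\bot}/2$. At a point of $C$ where $|x|$ is extremal the position vector is normal to $C$, so there $\vec k=-x^{\bot}/2=-x/2$ points at the origin; thus $C$ curves around the origin and encloses it (equivalently, by Abresch--Langer each branch winds about the origin in a fixed direction, and traversing $C$ once produces a nonzero winding number, just as in H\"attenschweiler's proof). Since $C\subseteq\gamma$ and the integrand is positive,
\[
\Theta_\gamma=\int_\gamma\rho_{0,0}(\cdot,-1)\,d\H^1\ \ge\ \int_{C}\rho_{0,0}(\cdot,-1)\,d\H^1=:\Theta_C ,
\]
and $\Theta_C$ is precisely the Gaussian density at $(0,0)$ of the self-similar loop-flow $C_t=\sqrt{-t}\,C$, which collapses to the origin at $t=0$.

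The heart of the matter is then the inequality $\Theta_C\ge\Theta_{S^1}$: the shrinking circle has the least Gaussian density among closed self-shrinking loops. For a \emph{smooth} closed self-shrinker this is classical, since by Abresch--Langer the only embedded example is the round circle of radius $\sqrt2$, and every other closed self-shrinker winds several times about the origin and so has strictly larger weighted length. The self-shrinker constraint is essential, since among \emph{concentric} circles the weighted length $r\mapsto\frac{1}{\sqrt{4\pi}}\,2\pi r\,e^{-r^2/4}$ is in fact \emph{maximized} at $r=\sqrt2$; it is only the equation $\vec k=-x^{\bot}/2$ that rules out the low-density large circles. Granting $\Theta_C\ge\Theta_{S^1}$, the chain $\Theta_\gamma\ge\Theta_C\ge\Theta_{S^1}$ contradicts $\Theta_\gamma<\Theta_{S^1}$, so no bounded complementary region exists and $\gamma$ is tree-like, completing the proof.

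The step I expect to be the main obstacle is exactly establishing $\Theta_C\ge\Theta_{S^1}$ when $C$ carries $120$-degree corners at the junctions, so that $C$ is \emph{not} a smooth Abresch--Langer curve to which the entropy computation applies verbatim. I would handle this either by a Gauss--Bonnet bookkeeping along $C$ (the corners contribute exterior angles which are multiples of $\pi/3$, and combined with $k=-\tfrac12\langle x,\nu\rangle$ and the star-shapedness of each branch this forces enough winding about the origin to push the weighted length above that of the circle), or by an intersection-number comparison: since $C$ and the shrinking circle are both self-shrinkers collapsing at the origin, the avoidance principle applied to the two curve-shortening flows constrains how $C$ can sit relative to the concentric circles and excludes $\Theta_C<\Theta_{S^1}$. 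Reducing the cornered loop to the known minimality of the round shrinking circle is the delicate point; the remaining ingredients are routine given H\"attenschweiler's Lemma and Huisken's monotonicity formula.
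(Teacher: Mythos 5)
Your reduction of the ``thus'' part to H\"attenschweiler's lemma and the half-line count is fine and matches the intended reading of the statement. The problem is the main step: you correctly identify that everything hinges on the inequality $\Theta_C\ge\Theta_{S^1}$ for the boundary loop $C$ of a bounded complementary component, with the Gaussian density \emph{centered at the origin} --- and then you do not prove it. Neither of your two suggested routes closes the gap: the Gauss--Bonnet identity only relates $\int_C\langle x,\nu\rangle\,ds$ (i.e.\ the area of the enclosed region) to the corner angles and gives no lower bound on the weighted length $\int_C\rho_{0,0}(\cdot,-1)$; and the avoidance argument does not apply to $C$ alone, because $C$ with its $120$-degree corners is not a curve shortening flow (nor a Brakke flow) --- at each junction the loop carries only two of the three arcs, so the tangents do not balance and Huisken's monotonicity for the self-similar flow $C_t=\sqrt{-t}\,C$ acquires uncontrolled corner terms. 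This last point is also why you cannot simply quote ``entropy of a closed self-shrinker $\ge\Theta_{S^1}$'' for $C$: $C$ is not a self-shrinker in any sense to which that statement applies.

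The paper resolves exactly this difficulty by \emph{not} insisting on the origin as the center. It first invokes the Colding--Minicozzi identity for the full self-shrinking network $\gamma$ (legitimate there, since the junctions of $\gamma$ are balanced and Huisken's monotonicity applies to the whole network flow): $\Theta_\gamma=\sup_{x_0,\,t_0>-1}\int_\gamma\rho_{x_0,t_0}(\cdot,-1)\,ds$. It then evolves the loop $\tilde\gamma=C$ by \emph{classical} curve shortening flow --- the corners smooth out instantly and there are no triple junctions on $C$ --- until it shrinks to a round point at some $(x_0,t_0)$, and Huisken's monotonicity for that auxiliary flow gives $\int_{\tilde\gamma}\rho_{x_0,t_0}\,ds\ge\Theta_{S^1}$. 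Chaining $\Theta_\gamma\ge\int_\gamma\rho_{x_0,t_0}\ge\int_{\tilde\gamma}\rho_{x_0,t_0}\ge\Theta_{S^1}$ finishes the argument. The collapse point $(x_0,t_0)$ is in general not the space-time origin, which is precisely why the sup over centers is needed and why your origin-centered formulation gets stuck. (Separately, the paper also treats the case of a cycle of $G$ collapsed entirely into a core, where the complement has no bounded component, by counting at least six outgoing half-lines, hence density $\ge 3$; your proposal to ``delete redundant degenerate edges'' breaks the degree-three condition at the vertices and should be replaced by that counting argument.)
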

\begin{proof}
  By the work of Colding-Minicozzi, \cite{ColdingMinicozzi12}, it holds that
\begin{equation}\label{eq:locreg.0.0}
\Theta_{\gamma} = \int_\gamma \rho_{0,0}(\cdot,-1)\, ds = \sup_{x_0\in\R^2,
  t_0>-1} \int_\gamma \rho_{x_0,t_0}(\cdot,-1)\, ds\, .
\end{equation}
Assume that $\gamma$ is not tree-like. Let us first assume that the
complement of $\gamma$ in $\R^2$ contains no bounded component. It is easy
to see from the proof of the previous lemma that this implies that
$\gamma$ consists of at least six half-lines emanating from the origin, together
with a core. Thus would imply that $\Theta_{\gamma}\geq 3$, a
contradiction.

Let $B$ be a bounded component of the complement of $\gamma$ and
$\tilde{\gamma}$ the sub-network of $\gamma$ which bounds $B$, counted
with unit multiplicity. Since $\tilde{\gamma}$ is smooth with corners, 
and no triple
junctions, we can evolve it by classical curve shortening flow until it
shrinks at $(x_0,t_0)$ to a 'round' point. By the monotonicity formula
this implies that
$$ \int_{\tilde{\gamma}}\rho_{x_0,t_0}\, ds \geq \Theta_{S^1}\, .$$
By \eqref{eq:locreg.0.0} this implies
$$\Theta_{\gamma}\geq \int_\gamma  \rho_{x_0,t_0}\, ds \geq
\int_{\tilde{\gamma}}\rho_{x_0,t_0}\, ds \geq \Theta_{S^1}\, .$$
\end{proof}

Given a sequence $\lambda_i\nearrow \infty$ and a space-time point
$(x_0,t_0)$, where $0<t_0\leq T$ the standard parabolic
rescaling around $(x_0, t_0)$ of the
flow is given by
$$\gamma^i_\tau = \lambda_i\big(\gamma_{\lambda^{-2}_i\tau +t_0}-x_0\big)\ ,$$
where $\tau \in [-\lambda_i^{2} t_0,\, \lambda_i^{2}(T-t_0))$.
Recall that the monotonicity formula implies
$$\Theta_{x_0,t_0}(t)-\Theta(x_0,t_0)=\int\limits_t^{t_0}\int\limits_{\gamma_{\sigma}}
\Big|\vec{k}+ \frac{(x-x_0)^\perp}{2(t_0-\sigma)}\Big|^2\rho_{x_0,t_0}(\cdot,\sigma)\, ds\,d\sigma$$
Changing variables according to the parabolic rescaling, we obtain
$$\Theta_{x_0,t_0}(t)-\Theta(x_0,t_0)=\int\limits_{\lambda_i^2(t-t_0)}^{0}\int
\limits_{\gamma^i_{\tau}}
\Big|\vec{k}- \frac{x^\perp}{2\tau}\Big|^2\rho_{0,0}(\cdot,\tau)\,
ds\,d\tau\ , $$
or for fixed time $\tau_0 \in (-\lambda_i^{2}t_0,0)$,
\begin{equation}\label{monresc}\Theta_{x_0,t_0}(t_0+\lambda_i^{-2}\tau_0)-\Theta(x_0,t_0)=
\int\limits_{\tau_0}^{0}\int
\limits_{\gamma^i_{\tau}}
\Big|\vec{k}- \frac{x^\perp}{2\tau}\Big|^2\rho_{0,0}(\cdot,\tau)\,
ds\,d\tau\ . 
\end{equation}
We now give a slightly modified version of the Blowup-Lemma in 
\cite{HaettenschweilerDiplom07}.

\begin{lemm} \label{thm:shrinkingnetworks.1} There exists a subsequence $(\lambda_{i})$ (relabeled again
the same) such that for almost all $\tau \in (-\infty, 0)$ and for any
$\alpha \in (0,1/2)$ 
$$\gamma^i_\tau \rightarrow \bar\gamma_\tau$$
in $C^{1,\alpha}_\text{loc}\cap W^{2,2}_\text{loc}$, where $\bar{\Gamma}_\tau$ is a generalized
self-similarly shrinking network at time $\tau$. This convergence
also holds in the sense of radon measures for all $\tau$. Note that the subsequence
does not depend on $\tau$ and also not the limit (except for scaling). 
\end{lemm}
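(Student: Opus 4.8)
The plan is to run a Huisken-type blowup argument, extracting compactness from uniform mass bounds and using the rescaled monotonicity identity \eqref{monresc} to force the limit onto the self-shrinker equation. First I would exploit that $\lambda_i\nearrow\infty$: for any fixed $\tau_0\in(-\infty,0)$ one has $t_0+\lambda_i^{-2}\tau_0\nearrow t_0$, so the left-hand side of \eqref{monresc} tends to $\Theta(x_0,t_0)-\Theta(x_0,t_0)=0$. Writing
$$f_i(\tau):=\int_{\gamma^i_\tau}\Big|\vec{k}-\frac{x^\perp}{2\tau}\Big|^2\rho_{0,0}(\cdot,\tau)\, ds\geq 0,$$
this says $\int_{\tau_0}^0 f_i(\tau)\,d\tau\to 0$. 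Exhausting $(-\infty,0)$ by such intervals and diagonalizing, I may pass to a subsequence along which $f_i\to 0$ in $L^1_\text{loc}(-\infty,0)$, and hence (after a further extraction) $f_i(\tau)\to 0$ for almost every $\tau$, with the subsequence independent of $\tau$.

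Next, the finiteness of the Gaussian density $\Theta(x_0,t_0)$ together with the monotonicity formula yields, by a standard argument, uniform mass bounds $\H^1(\gamma^i_\tau\cap B_R)\leq C(R,\tau)$. For each fixed $\tau$ this gives uniform mass on compact sets, so a diagonal extraction over a countable dense set of times, combined with the time-continuity afforded by monotonicity, produces a subsequence along which $\gamma^i_\tau$ converges as Radon measures for \emph{every} $\tau$. At each of the almost-every times where $f_i(\tau)\to 0$, I would note that on any ball $B_R$ one has $\rho_{0,0}(\cdot,\tau)\geq c(R,\tau)>0$ while $|x^\perp/(2\tau)|$ is bounded; hence $\int_{\gamma^i_\tau\cap B_R}|\vec{k}|^2\, ds$ is uniformly bounded and $\vec{k}\to x^\perp/(2\tau)$ in $L^2_\text{loc}$. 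A local $L^2$-bound on the curvature is, in one space dimension, exactly a bound on the arc-length parametrization in $W^{2,2}_\text{loc}$, which embeds compactly into $C^{1,\alpha}_\text{loc}$ for $\alpha<1/2$; so for almost every $\tau$ I can extract a further subsequence converging in $C^{1,\alpha}_\text{loc}\cap W^{2,2}_\text{loc}$ to a limit $\bar\gamma_\tau$.

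Passing to the limit in $\vec{k}\to x^\perp/(2\tau)$ shows that each non-degenerate branch of $\bar\gamma_\tau$ solves $\vec{k}=x^\perp/(2\tau)$, so that after rescaling $\bar\gamma_\tau=\sqrt{-\tau}\,\bar\Psi$ the network $\bar\Psi$ satisfies $\vec{k}=-x^\perp/2$; the $120$-degree balancing at triple junctions is preserved under $C^1$-convergence, and any segments whose length tends to zero collapse to a connected core, so $\bar\gamma_\tau$ is a generalized self-similarly shrinking network in the sense defined above. Finally, the self-similar structure $\bar\gamma_\tau=\sqrt{-\tau}\,\bar\Psi$ means the whole family is determined by the single shrinker $\bar\Psi$, which is precisely why neither the subsequence nor the limit depends on $\tau$ beyond the scaling factor.

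I expect the main obstacle to be the network structure itself. The delicate point is to ensure that the $C^{1,\alpha}_\text{loc}$ extraction is compatible with the triple junctions: that triple points converge to triple points or collapse to a connected core, that no branch oscillates out of control, and that the $120$-degree condition and the degenerate regular structure survive in the limit. Controlling the location and mutual separation of the triple junctions—so that the limit is a genuine degenerate regular network rather than an uncontrolled varifold—is where the uniform curvature estimates and the structural results must be combined with care.
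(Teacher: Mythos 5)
Your proposal is correct and follows essentially the same route as the paper: extract $f_i(\tau)\to 0$ a.e.\ from the rescaled monotonicity identity, deduce uniform local $L^2$ curvature bounds, pass to a degenerate limit in $C^{1,\alpha}_{\text{loc}}\cap W^{2,2}_{\text{loc}}$ solving the shrinker equation, and use measure convergence for all $\tau$ (which the paper packages as convergence of Brakke flows) to pin down the limit. The only cosmetic difference is how the $\tau$-independence of the subsequence is justified: the paper observes that any further subsequential $C^{1,\alpha}$-limit must coincide in measure with the already-fixed Brakke/Radon limit and is therefore unique, whereas your appeal to the self-similar structure should be replaced by exactly this uniqueness-of-the-measure-limit argument, for which you already have all the ingredients.
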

\begin{proof} We first choose a subsequence such that the rescalings converge 
as Brakke flows to a self-similarly shrinking tangent flow. Let 
$$f_i(\tau):=\int\limits_{\gamma^i_{\tau}}
\Big|\vec{k}- \frac{x^\perp}{2\tau}\Big|^2\rho_{0,0}(\cdot,\tau)\,
ds .$$
Note that \eqref{monresc} implies that $f_i\rightarrow 0$ in
$L^1_\text{loc}((-\infty, 0])$. Thus there exists a subsequence 
such that $f_i$ converges point-wise a.e. to zero. This implies that
that for any $R>0$
$$\int\limits_{\gamma^i_{\tau}\cap B_R(0)}
|k|^2ds \leq C\, ,$$
independent of $i$. By choosing a further subsequence
 we can assume that $\Gamma^i_\tau$ converges in
 $C^{1,\alpha}_\text{loc}$ to a degenerate network. Note that each
 limiting segment, which is non-degenerate is in $W^{2,2}_\text{loc}$
 and and is a weak solution of 
$$\vec{k} =  \frac{x^\perp}{2\tau}\, .$$
By elliptic regularity, each such segment is actually smooth, and thus
the limiting network is a generalized self-similarly
shrinking network at time $\tau$. But since this limit has to coincide in measure
with the limiting Brakke-flow it is unique, and the whole sequence
converges. The convergence in $W^{2,2}_\text{loc}$ is implied by the weak
convergence in $W^{2,2}$ and the fact
that $f_i(\tau) \rightarrow 0$. 
\end{proof}

This can be strengthened, if the limit has unit density.

\begin{lemm}\label{thm:shrinkingnetworks.2} Assume that a sequence of rescalings as above converges in the sense of
Brakke flows to a regular self-similarly shrinking network,
$$ (\gamma^i_t) \rightarrow (\bar{\gamma}_t)\ .$$ 
 Then this convergence is smooth on all compact subsets of
$\R^2\times (-\infty, 0)$.
\end{lemm}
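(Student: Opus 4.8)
The plan is to localize the statement: it suffices to prove that for each space-time point $(x_0,\tau_0)\in\R^2\times(-\infty,0)$ there is a parabolic neighborhood on which $\gamma^i_\tau\to\bar\gamma_\tau$ smoothly, and then to patch these neighborhoods over a compact subset. By Lemma \ref{thm:shrinkingnetworks.1} we already have $C^{1,\alpha}_{\mathrm{loc}}\cap W^{2,2}_{\mathrm{loc}}$ convergence for almost every $\tau$, and since the limit is regular (hence of unit density) it is, near any point, either empty, a single smooth embedded curve, or a standard triod.

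First I would treat the case where $x_0$ is not a triple junction of $\bar\gamma_{\tau_0}$. There the limit is a smooth embedded curve near $x_0$, so choosing a time $\tau_1<\tau_0$ at which the $C^{1,\alpha}$ convergence holds, the networks $\gamma^i_{\tau_1}$ are, for $i$ large, graphs with arbitrarily small Lipschitz constant over the tangent line to $\bar\gamma_{\tau_1}$ in a fixed cylinder. Theorem \ref{thm:graph_local} then guarantees that the flow remains graphical with small gradient on a definite space-time cylinder containing $(x_0,\tau_0)$, and the interior estimates of Ecker and Huisken \cite{EckerHuisken91} for graphical mean curvature flow upgrade this to uniform bounds on all higher derivatives; uniqueness of the limit then yields smooth convergence there.

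The main case, and the hard part, is $x_0=p$ a triple junction of $\bar\gamma_{\tau_0}$. Because the limit is regular rather than degenerate, no segment collapses in the limit, and I would first make this quantitative: there exist $r>0$ and a uniform lower bound $\ell_0>0$ on the lengths of the segments of $\gamma^i_\tau$ meeting $B_r(p)$, for all large $i$ and all $\tau$ near $\tau_0$. If this failed, a segment length would tend to zero along a subsequence, and the compactness argument of Lemma \ref{thm:shrinkingnetworks.1} would produce a nontrivial core, i.e.\ a degenerate limit, contradicting regularity. With this bound, together with the estimate $\int_{\gamma^i_\tau\cap B_R}|k|^2\,ds\le C$ furnished by Lemma \ref{thm:shrinkingnetworks.1} (from $f_i(\tau)\to 0$), I would apply the localized interior integral estimates of Section \ref{sec:loc reg}, namely Proposition \ref{thm:locreg} and the computation \eqref{eq:locreg.9}. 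To set these up I would cut the flow off along $\partial B_r(p)$, introducing artificial boundary points on the three segments; the hypotheses \eqref{eq:locreg.7}--\eqref{eq:locreg.8} are then verified with $i$-independent constants precisely because the smooth convergence away from the junction controls $k$ and its derivatives on $\partial B_r(p)$. This yields, interior in time, uniform bounds $|\nabla^j k|\le C_j$ on $B_{r/2}(p)$ for every $j$.

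Finally, these spatial curvature bounds combine with the evolution equation \eqref{eq:locreg.6} and its derivatives (trading time derivatives for spatial ones) to give uniform space-time $C^\infty$ bounds, whence Arzel\`a--Ascoli extracts a smoothly convergent subsequence; since its limit must coincide with $\bar\gamma$, the whole sequence converges smoothly on the neighborhood. I expect the genuine obstacle to be the triple-junction case: both ruling out segment collapse, so that the Mantegazza--Novaga--Tortorelli type estimates \cite{MNTNetworks} (which degenerate as lengths shrink) remain applicable with uniform constants, and correctly controlling the artificial boundary terms so that the localized integral estimates close up.
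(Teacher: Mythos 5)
Your overall architecture matches the paper's: reduce to a compact space-time region, control the curvature and its derivatives near the (artificial) boundary using regularity away from the junctions, and then run the localized Mantegazza--Novaga--Tortorelli integral estimates of Section \ref{sec:loc reg} with the a.e.-in-time $C^{1,\alpha}\cap W^{2,2}$ convergence of Lemma \ref{thm:shrinkingnetworks.1} as input. The genuine gap is in the step where you claim a uniform lower bound $\ell_0>0$ on segment lengths for \emph{all} $\tau$ near $\tau_0$ by a compactness/contradiction argument. The compactness machinery of Lemma \ref{thm:shrinkingnetworks.1} is available only at times $\tau$ where $f_i(\tau)\to 0$, i.e.\ for almost every $\tau$ along the chosen subsequence; at an arbitrary time where a segment might be collapsing you have no uniform local $W^{2,2}$ bound, hence no $C^{1,\alpha}$ limit and no ``generalized shrinker with a core'' to derive a contradiction from. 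Nor does the Brakke-flow convergence to a regular network rule out a core by itself: a collapsing segment carries no mass, so the Radon-measure limit is blind to it. As stated, the contradiction therefore does not close.

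The repair is exactly what the paper does, and it is compatible with the rest of your outline: do not try to establish the length bound for all times in advance. Instead pick good times $\tau_j$ (where $W^{2,2}$ convergence to the regular limit holds, so that $1+\int k^2+\sum_i l(\sigma_i)^{-1}$ is uniformly bounded in $i$) spaced at most $\delta$ apart, and use the differential inequality \eqref{odek2linv} --- equivalently, apply Proposition \ref{thm:locreg} starting at a good time just before $\tau_0$ --- to propagate the bound on $\int k^2$ \emph{and} on the inverse segment lengths forward over each short interval $[\tau_j,\tau_{j+1}]$, with $\delta$ chosen small in terms of the bound at the good times. The lower bound on segment lengths then comes out as a conclusion rather than going in as a hypothesis. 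The remainder of your argument (boundary control via pseudolocality and Ecker--Huisken, or via White's theorem as in the paper; interior-in-time higher derivative estimates; Arzel\`a--Ascoli; uniqueness of the limit to upgrade subsequential to full convergence) is in line with the paper's proof.
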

\begin{proof}
By the Lemma before we can choose a further subsequence such that we
have $\Gamma^i_\tau \rightarrow \bar{\Gamma}_\tau$ in
$C^{1,\alpha}_\text{loc}\cap W^{2,2}_\text{loc}$ for almost every
$\tau$. Now take any set of the form $\Omega = \bar{B}_R(0)\times
[a,b]$, $a<b<0$, where we choose $R$ big enough, such that
$\partial B_R(0)$ intersects $\bar{\Gamma}_\tau$ for $\tau \in [a-2,b]$
only in the straight lines going out to infinity (if they
exist). Since for almost every $\tau$ we have convergence in
$C^{1,\alpha}$ we know that the Gaussian density ratios in this set are
less than $1+\varepsilon$ for all $\tau \in [a-3/2,b]$. Thus there can
be no triple points present, and by the estimates of White
\cite{White05}, we can choose $i_0$ big enough such that
 $|\nabla^j k|$ is small on $\Gamma_i$ for all $j\geq 0$ on
$B_{R+1}\setminus B_R$ for all $\tau \in [a-1,b]$ and all
$i>i_0$.
 Now
for any given $\epsilon,\delta > 0$ we choose $i_0$ even bigger such that there
exists times $\tau_j,\ j = 0,\dots, N:=2 [(b-a+1)/\delta]+1$ such that
$$ |\tau_0-a+1|, |\tau_N-b| \leq \delta,\ |\tau_{j+1}-\tau_j| \leq \delta$$   
for all $0\leq j \leq N-1$ and
$$\|\Gamma^i_{\tau_j}-\bar{\Gamma}_{\tau_j}\|_{W^{2,2}(B_{R+1})} \leq
  \epsilon \ ,$$
for all $i>i_0$. We now fix $\epsilon >0$ and adjust $\delta>0$
accordingly
such that we can ensure that by \eqref{odek2linv} that  
$$\|\Gamma^i_{\tau}\|_{W^{2,2}(B_{R+1})} \leq
  C \ ,$$
for all $\tau \in [a,b]$ and $i>i_0$. The higher order interior estimates then
prove smooth subsequential convergence. For this argument we had chosen a subsequence, but since we can
always choose such a subsequence, the whole sequence converges.
 \end{proof}

\subsection{Regularity results}\label{sec:reg res}
\medskip
In the following we will give some regularity results for 'proper'
flows, where we say that a flow is a proper flow, given by its space-time
track $\mathcal{M}$ in an open subset $U$ of
space-time, if 
$$\mathcal{M} = \overline{\mathcal{M}}\cap U\ ,$$
compare with section 2.3 in \cite{White05}.
\begin{thm}\label{thm:locreg.3}
Let $(\gamma^i_t)_{0<t<T}$ be a sequence of smooth network flows with
uniformly bounded length ratios
which converges locally as Brakke flows to the standard triod. Then this
convergence is smooth.
\end{thm}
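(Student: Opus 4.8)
The plan is to reduce to the template of the proof of Lemma \ref{thm:shrinkingnetworks.2}: once we know that $\gamma^i_t\to\bar\gamma_t$ in $C^{1,\alpha}_\text{loc}\cap W^{2,2}_\text{loc}$ for almost every $t$, with $\bar\gamma_t$ equal to the standard triod and its triple point not collapsing, smoothness of the convergence follows from the localized integral estimates developed in this section. So the first and main task is to upgrade the Brakke convergence to $C^{1,\alpha}$-convergence for almost all times.

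For this I would fix $t_0>T$ and apply Huisken's monotonicity formula centered at $(0,t_0)$ to each $\gamma^i$. Since the standard triod is a cone through the origin, $\vec k=0$ and $x^\perp=0$ on it, and a direct computation gives that its Gaussian density $\Theta_{0,t_0}(t)$ equals $3/2$ for every $t$. As the measures of $\gamma^i_t$ converge to the standard triod, the monotone quantities $\Theta^i_{0,t_0}(t)$ converge to this constant, so that for any $[t_1,t_2]\subset(0,T)$
$$\int_{t_1}^{t_2}\!\int_{\gamma^i_t}\Big|\vec k+\frac{x^\perp}{2(t_0-t)}\Big|^2\rho_{0,t_0}\,ds\,dt=\Theta^i_{0,t_0}(t_1)-\Theta^i_{0,t_0}(t_2)\longrightarrow0\, .$$
Passing to a subsequence, the inner integral tends to $0$ for almost every $t$; together with the uniform length-ratio bound and $|x^\perp|\le R$ on $B_R$ this yields, exactly as in Lemma \ref{thm:shrinkingnetworks.1}, a uniform bound $\int_{\gamma^i_t\cap B_R}|\vec k|^2\,ds\le C_R$ for such $t$. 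Standard compactness then produces $C^{1,\alpha}_\text{loc}\cap W^{2,2}_\text{loc}$ convergence to a generalized self-similarly shrinking network $\bar\gamma_t$ (the shrinker equation $\vec k=x^\perp/(2\tau)$ holding with $\tau=-(t_0-t)<0$), which coincides in measure with the Brakke limit.

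Now comes the key point. Since $\bar\gamma_t$ is a generalized self-shrinker of Gaussian density $3/2<\Theta_{S^1}$, Lemma \ref{thm:densitybound} forces it to be the standard triod with trivial core. This is what rules out a collapsing core: the triple points of $\gamma^i_t$ near the origin do not split, and the three branches meeting $B_R$ keep length bounded below as $i\to\infty$. From here the argument is that of Lemma \ref{thm:shrinkingnetworks.2}. Choosing $R$ so that $\partial B_R$ meets the triod transversally in its three radial branches, the $C^{1,\alpha}$-convergence gives density ratios below $1+\varepsilon$ on $B_{R+1}\setminus B_R$, whence White's local regularity \cite{White05} bounds $|\nabla^jk|$ there and supplies the boundary data \eqref{eq:locreg.7}--\eqref{eq:locreg.8}. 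Picking a fine net of good times on which $\|\gamma^i-\bar\gamma\|_{W^{2,2}(B_{R+1})}$ is small and feeding the lower length bound and boundary control into the ODE \eqref{odek2linv}, one bounds $\|\gamma^i_\tau\|_{W^{2,2}(B_{R+1})}$ uniformly on the whole time interval; the higher-order interior estimates behind Proposition \ref{thm:locreg} then upgrade this to smooth convergence on compact subsets of $\R^2\times(0,T)$. As the subsequence was arbitrary, the full sequence converges smoothly.

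I expect the main obstacle to be this first stage: extracting the almost-everywhere $L^2$-curvature bound and, above all, invoking the density gap $3/2<\Theta_{S^1}$ through Lemma \ref{thm:densitybound} to exclude a degenerate core. It is precisely the absence of collapsing segments that keeps $\sum_i 1/l(\sigma_i)$ controlled in \eqref{odek2linv} and makes the interior estimates applicable up to the triple point.
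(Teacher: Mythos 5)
Your proposal is correct and follows essentially the same route as the paper: convergence of the monotone Gaussian densities to the constant value $3/2$ forces the deficit integral to vanish, giving $C^{1,\alpha}_{\mathrm{loc}}\cap W^{2,2}_{\mathrm{loc}}$ convergence for a.e.\ time as in Lemma \ref{thm:shrinkingnetworks.1}, after which the argument of Lemma \ref{thm:shrinkingnetworks.2} (White's regularity away from the junction, \eqref{odek2linv}, and the interior estimates) upgrades this to smooth convergence. The only cosmetic difference is that you exclude a collapsing core via the density gap of Lemma \ref{thm:densitybound}, whereas the paper invokes the equivalent combinatorial fact that a core at a triple point would require more than three half-lines; both are valid.
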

\begin{proof}
We can assume that the triple point of the standard triod is at the
origin. Fix $T^\prime>T$. Then for any $0<t_1<t_2<T$ we have
$$\int_{\gamma^i_{t_j}} \Phi_{0,T^\prime}\, ds \rightarrow \frac{3}{2}
  \qquad \text{where}\ j=1,2\ .$$
Then as in the proof of lemma \ref{thm:shrinkingnetworks.1} we obtain
that there is subsequence where we have $C^{1,\alpha}$-convergence to
the standard triod for a.e. $t\in (t_1,t_2)$. Note that by White's
regularity theorem the convergence is smooth away from the triple
point, and that out of combinatorial reasons no core can develop at
the triple point of the standard triod. As in the proof of lemma
\ref{thm:shrinkingnetworks.2} we can then show that the
  convergence is actually smooth.
\end{proof} 
In the following we will prove a local regularity result in the spirit of
Brian White's result for mean curvature flow \cite{White05}. We follow
here the alternative proof of Ecker \cite[Theorem 5.6]{Ecker04}. The
Gaussian density ratios are defined as
\begin{equation}\label{Gaussian density}
 \Theta(x,t,r):=\Theta_{t-r^2}(x,t)\ .
\end{equation}
In the case of proper flows, which are only defined in an open subset
of space-time one has to localize Huisken's monotonicity
formula. Compare with section 10 in \cite{White97} and Remark 4.16
together with Proposition 4.17
in \cite{Ecker04}. To keep the presentation simpler, we will only
give proofs of the next two theorems for flows defined on all of
$\R^2$ and leave the modifications in the case of proper flows to the reader.

\begin{thm}\label{thm:locreg.1} Let $(\gamma_t)_{t\in[0,T)}$ be a
  smooth, proper and regular planar network flow in $ B_\rho(x_0)\times (t_0-\rho^2,t_0)$ which reaches the point $x_0$ at time
  $t_0 \in (0,T]$. Assume that for some $\varepsilon
  >0$  it holds that 
\begin{equation}\label{eq:locreg.0}
\Theta(x,t,r) \leq 2- \varepsilon
\end{equation}
for all $(x,t) \in B_\rho(x_0)\times (t_0-\rho^2,t_0)$ and
$0<r<\eta\rho$ for some $\eta >0$, where $(1+\eta)\rho^2\leq
t_0<T$. Furthermore, assume that $\gamma_t\cap B_\rho(x_0)$ has no closed
loops of length less than $\delta \rho >0$
for all $t\in (t_0 - (1+\eta)\rho^2,t_0)$ for some $\delta>0$. Then
there exists $C=C(\epsilon, \eta, \delta)$ such that
$$ |k|^2(x,t) \leq \frac{C}{\sigma^2\rho^2} $$
for $ (x,t)\in \big(\gamma_t\cap
    B_{(1-\sigma)\rho}(x_0)\big)\times (t_0-(1-\sigma)^2\rho^2,t_0)$
    and all $\sigma \in (0,1)$.
\end{thm}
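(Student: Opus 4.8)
The plan is to argue by contradiction through a blow-up, following the scheme of White \cite{White05} and the alternative argument of Ecker \cite{Ecker04}. After a parabolic rescaling and translation I would normalize $x_0=0$, $t_0=0$, $\rho=1$, so that the flow lives on $B_1(0)\times(-1,0)$, the bound $\Theta(x,t,r)\le 2-\varepsilon$ holds for $(x,t)\in B_1(0)\times(-1,0)$ and $0<r<\eta$, and there are no loops of length $<\delta$ in $B_1(0)$. Suppose the asserted estimate fails. Then there is a sequence of such flows $(\gamma^n_t)$, with the same $\varepsilon,\eta,\delta$, for which the scale-invariant quantity $|k_n|^2(x,t)\,d_P(x,t)^2$ is unbounded, where $d_P$ denotes parabolic distance to the parabolic boundary of the cylinder. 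I would then run the parabolic point-selection argument of Hamilton to produce almost-maximal points $(x_n,t_n)$ in the interior, set $Q_n=|k_n|(x_n,t_n)\to\infty$, and rescale $\hat\gamma^n_\tau=Q_n\big(\gamma^n_{t_n+Q_n^{-2}\tau}-x_n\big)$; this yields flows with $|\hat k_n|(0,0)=1$ and $|\hat k_n|\le C_0$ on parabolic balls $P_{R_n}$ with radii $R_n\to\infty$.

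Next I would pass to a limit. The length ratios are scale-invariant and uniformly bounded (see Remark iv following Definition \ref{def:network}), and away from triple junctions the bounded curvature together with the localized Mantegazza--Novaga--Tortorelli estimates of Proposition \ref{thm:locreg} control all higher derivatives; this gives subsequential convergence $\hat\gamma^n\to\hat\gamma^\infty$, smooth on the segments and in the sense of Brakke flows globally, to an ancient network flow with bounded curvature, bounded length ratios and Gaussian density ratios $\le 2-\varepsilon$. The crucial point is that the monotonicity defect vanishes in the limit: using the rescaled monotonicity formula \eqref{monresc} centered at the image of $(x_n,t_n)$, the defect of $\hat\gamma^n$ over a fixed window $[-R^2,0)$ equals $\Theta_{x_n,t_n}(t_n-R^2Q_n^{-2})-\Theta(x_n,t_n)\to 0$ as $Q_n\to\infty$. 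Arguing exactly as in Lemma \ref{thm:shrinkingnetworks.1}, this forces $\hat\gamma^\infty$ to be a generalized self-similarly shrinking network centered at the origin.

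Finally I would classify the limit. Loops of rescaled length $\ell$ arise from loops of original length $\ell/Q_n\to 0<\delta$, so $\hat\gamma^\infty$ is tree-like; by the H\"attenschweiler Lemma its non-degenerate part consists of half-lines emanating from the origin, possibly together with a core there, which are straight and hence carry zero curvature. Since a core has vanishing length, the limit \emph{as a Brakke flow} is supported on these half-lines, each contributing $1/2$ to the Gaussian density; the bound $\Theta_{\hat\gamma^\infty}\le 2-\varepsilon<2$ therefore permits at most three of them, and stationarity of the junction forces the balancing $T_1+T_2+T_3=0$. Thus the limit is the static multiplicity-one line or the standard triod. In either case White's regularity theorem \cite{White05}, respectively Theorem \ref{thm:locreg.3} together with Lemma \ref{thm:shrinkingnetworks.2}, upgrades the convergence to smooth convergence with uniformly bounded curvature near $(0,0)$, whence $|\hat k_n|(0,0)\to 0$. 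This contradicts $|\hat k_n|(0,0)=1$ and proves the estimate.

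The main obstacle is the passage to a \emph{smooth} limit across triple junctions and the attendant control of $|\hat k_n|(0,0)$: this is exactly where the localized interior estimates of Proposition \ref{thm:locreg} and the triod-rigidity statements (Theorem \ref{thm:locreg.3} and Lemma \ref{thm:shrinkingnetworks.2}) are indispensable, and one must also verify that the no-small-loops hypothesis survives the rescaling so that the H\"attenschweiler classification applies. For the proper-flow version one additionally localizes Huisken's monotonicity as in \cite{White97,Ecker04}, which I would treat precisely as indicated in the remarks preceding the theorem.
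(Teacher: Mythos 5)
Your overall architecture (contradiction, curvature point selection, parabolic blow-up, classification of the limit via the density bound and tree-likeness, and the final upgrade to smooth convergence through White's theorem and Theorem \ref{thm:locreg.3}) is the same as the paper's, which follows Ecker's version of White's argument. There is, however, a genuine gap in the step where you identify the blow-up limit. You assert that the monotonicity defect of $\hat\gamma^n$ over a fixed window $[-R^2,0)$ equals $\Theta_{x_n,t_n}(t_n-R^2Q_n^{-2})-\Theta(x_n,t_n)$ and tends to $0$ as $Q_n\to\infty$, so that the limit is a generalized self-similarly shrinking network centred at the origin. The identity is correct, but the convergence to zero is not: in Lemma \ref{thm:shrinkingnetworks.1} the defect vanishes because the centre $(x_0,t_0)$ is \emph{fixed}, so $\Theta_{x_0,t_0}(t)\searrow\Theta(x_0,t_0)$ along a single monotone function as $t\nearrow t_0$. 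Here the centres $(x_n,t_n)$ vary with $n$ and Huisken's monotonicity formula provides no uniform modulus; at the curvature scale $RQ_n^{-1}$ around $x_n$ the density ratio records the nontrivial unit-curvature geometry produced by the point selection (and may stay near $2-\varepsilon$, e.g.\ if several junctions or a collapsing configuration live at that scale), while $\Theta(x_n,t_n)\in\{1,3/2\}$. Hence the blow-up limit is in general only an \emph{ancient} $C^{1,1}$ network flow, not a shrinker, and your classification step does not apply to it as stated.

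The repair is exactly what the paper does: pass to the $C^{1,\alpha}_{\mathrm{loc}}$ limit $\tilde\gamma^\infty$, on which the bound $\Theta(x,t,r)\le 2-\varepsilon$ now holds for \emph{all} scales $r>0$; take the tangent flow of $\tilde\gamma^\infty$ at $-\infty$, which \emph{is} a generalized shrinker by Lemma \ref{thm:shrinkingnetworks.1}, and which the density bound together with tree-likeness forces to be a unit-density line or the standard triod, so that $\lim_{r\to\infty}\Theta(x,t,r)\in\{1,3/2\}$. The rigidity case of the monotonicity formula (this limit at infinity already equals the Gaussian density at a regular point, respectively at a triple point, of $\tilde\gamma^\infty$) then shows that $\tilde\gamma^\infty$ itself is the static line or the static triod, after which your final contradiction with $|\hat k_n|(0,0)=1$ goes through as you describe. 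A secondary, smaller point: your claim of smooth subconvergence ``on the segments'' via Proposition \ref{thm:locreg} is premature, since segment lengths may degenerate along the sequence; the paper only extracts a $C^{1,\alpha}_{\mathrm{loc}}\cap W^{2,2}_{\mathrm{loc}}$ limit and rules out cores and higher multiplicity afterwards, using the $C^{1,1}$ bound together with $\Theta\le 2-\varepsilon$.
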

\begin{rmk}\label{rem:locreg.1}
  Note that the bound on the curvature, together with the balancing
  condition and \eqref{eq:locreg.0}, gives that there is a constant
  $\kappa=\kappa(\varepsilon, \eta, \delta)>0$ such that
  the length of each segment which intersects
  $B_{(1-\sigma)\rho}(x_0)\times (t_0-(1-\sigma)^2\rho^2,t_0)$ is
  bounded from below by $\kappa\cdot \sigma \rho$.
This implies, using Theorem \ref{thm:locreg}, corresponding scaling invariant estimates on all higher derivatives of the curvature.
\end{rmk}
\begin{proof} We can first assume that $t_0<T$, and pass to limits
  later. By translation and scaling we can furthermore assume that $x_0=0$
  and $\rho = 1$. We can now follow more or less verbatim the proof of
  Theorem 5.6 in \cite{Ecker04}. Supposing that the statement is not
  correct we can find a sequence of smooth, regular network flows
  $(\gamma^j_t)$, defined for $t \in [-1-\eta,0]$, reaching the point
  $(0,0)$ and satisfying the
  above conditions, but 
  \begin{equation}
    \label{eq:locreg.0.1}
    \zeta_j^2 := \sup_{\sigma \in (0,1)}\Bigg(\sigma^2
    \sup_{(-(1-\sigma)^2,0)}\sup_{\gamma^i_t\cap B_{1-\sigma}} |k|^2
      \Bigg) \rightarrow \infty
  \end{equation}
as $j \rightarrow \infty$. We can find $\sigma_j \in (0,1)$ such that
$$\zeta_j^2 = \sigma_j^2
    \sup_{(-(1-\sigma_j)^2,0)}\sup_{\gamma^i_t\cap B_{1-\sigma_j}} |k|^2$$
and $y_j \in \gamma^j_{\tau_j} \cap \bar{B}_{1-\sigma_j}$ at a time
$\tau_j \in [-(1-\sigma_j)^2,0]$ so that
 \begin{equation}
    \label{eq:locreg.0.2}
\zeta_j^2 = \sigma_j^2 |k(y_j,\tau_j)|^2\ .
\end{equation}
We now take 
$$\lambda_j = |k(y_j,\tau_j)|^{-1}$$
and define
$$\tilde{\gamma}^j_s = \frac{1}{\lambda_j}
\Big(\gamma^j_{\lambda_j^2s+\tau_j} - y_j\Big)$$
for $s\in [-\lambda_j^2\sigma_j^2/4,0]$. As in the the proof of
Theorem 5.6 in \cite{Ecker04} we see that 
\begin{equation}\label{eq:locreg.0.2a}
0\in \tilde{\gamma}^j_0\, , \qquad |k(0,0)|=1
\end{equation}
and
$$\sup_{(-\lambda_j^{-2}\sigma_j^2/4,0)}\sup_{\tilde{\gamma}^j_s
  \cap B_{\lambda_j^{-1}\sigma_j/2}} |k|^2 \leq 4$$
for every $j \geq 1$. Since $\lambda_j^{-2}\sigma_j^2 = \zeta_j^2
\rightarrow \infty$ we see that up to a subsequence, labeled
again the same, 
\begin{equation}\label{eq:locreg.0.3}
\tilde{\gamma}_s^j \rightarrow \tilde{\gamma}^\infty_s 
\end{equation} 
converges locally uniformly
$\mathbb{R}^2\times\mathbb{R}$ in $C^{1,\alpha}$ to a limiting
$C^{1,1}$-solution $\tilde{\gamma}^\infty_s$ of the network flow. Note
that this limiting solution is defined for $s \in (-\infty,0]$ and
possibly degenerate, i.e. cores and higher density lines can develop. But note
that \eqref{eq:locreg.0} implies that on $\tilde{\gamma}^\infty_s$
\begin{equation}\label{eq:locreg.0.4}
 \Theta(x,t,r)\leq 2-\epsilon
\end{equation}
for all $r>0$ and $(x,t)$ in $\R^2\times (-\infty,0]$.
Together with the fact that $\tilde{\gamma}^\infty_s$ is uniformly
bounded in $C^{1,1}$, this implies that $\tilde{\gamma}^\infty$ is non-degenerate, i.e. there are no higher densities and no cores. Furthermore,
the assumption on the lower bound for the length of closed loops
implies that $\tilde{\gamma}^\infty$ is tree-like. 
The estimate \eqref{eq:locreg.0.4} yields that the tangent flow at
$-\infty$ is either a static unit density line,
or the standard triod and $\lim_{r\rightarrow
  -\infty}\Theta(x,t,r)$ is either $1$ or $3/2$. In the first case
this implies that $\tilde{\gamma}^\infty$ is a static unit density
 line. But then White's local regularity
theorem implies that the convergence in \eqref{eq:locreg.0.3} is
smooth. This gives a contradiction to  \eqref{eq:locreg.0.2a}. In the
second case $\tilde{\gamma}^\infty$ has to have a triple point, and
thus is the standard triod. Then Proposition \ref{thm:locreg.3} gives a
contradiction as before.
\end{proof}

Without the assumption on the length of the shortest loops, we prove a similar
statement if the Gaussian densities are less than $\Theta_{S^1}$:
\begin{proof}[Proof of Theorem \ref{thm:locreg.2}] 
The proof is nearly identical to the proof of Theorem
\ref{thm:locreg.1}. Rescaling and translating such that $x_0=0$ and $\rho = 1$ we
assume that as a contradiction
 we have a sequence of smooth, regular network flows
$(\gamma^j_t)$ defined for $t\in [-1-\eta,0]$, reaching $(0,0)$,
satisfying \eqref{eq:locreg.0.1} and
\begin{equation}
  \label{eq:locreg.0.6}
  \Theta(x,t,r)\leq \Theta_{S^1} -\varepsilon
\end{equation}
for all $(x,t) \in B_1 \times (-1,0)$ and $0<r<\eta$. Rescaling as before we obtain a
limiting $C^{1,1}$ solution $(\tilde{\gamma}^\infty_s)$ which
satisfies
$$\Theta(x,t,r)\leq \Theta_{S^1} -\varepsilon$$
for all $r>0$ and $(x,t) \in \mathbb{R}^2(-\infty,0]$. By Lemma
\ref{thm:densitybound} this implies that the tangent flow at $-\infty$
is either a static unit density line, or the standard triod. We reach
a contradiction as in the proof of the previous theorem.
\end{proof}

\section{A pseudolocality result for Mean Curvature Flow}\label{sec:graph_local}
We recall the following setup from the introduction. For any point $x \in \R^{n+k}$ we write $x = (\hat{x},
\tilde{x})$ where $\hat{x}$ is the orthogonal projection of $x$ on the
$\R^n$-factor and $\tilde{x}$ the orthogonal projection on the $\R^k$
factor.  We define the cylinder $C_R(x_0) \subset \R^{n+k}$ by
$$C_r(x) = \{x\in \R^{n+k}\, |\, |\hat{x}-\hat{x}_0|<r,
|\tilde{x}-\tilde{x}_0|<r\}\ .$$
Furthermore, we write $B_r^n(x_0) = \{(\hat{x},\tilde{x}_0)\in
\R^{n+k}\, \ \, |\hat{x}-\hat{x}_0|<r\}$. 

\begin{proof}[Proof of Theorem \ref{thm:graph_local}] We first assume
  that $T\geq 1$. Translating
  $x_0$ to $0$ and rescaling with a factor $R>0$ we can assume that
  $M_0\cap C_R(0) = \text{graph}(u)$ where $u:B^n_R(0)\rightarrow
  \R^k$ with Lipschitz constant less than $\varepsilon$. We want to
  show that there exists $R\gg 1$ such that $M_t\cap C_1(0)$
is a graph with Lipschitz constant less than $\eta$ and height bounded
by $\eta/2$ for all $t\in[0,1]$. 

Recall that by \cite{White05}, if all Gaussian density ratios up
to scale $1$ centered at $(x,t) \in B_2(0)\times [0,1]$ are bounded
above by $1+\varepsilon_0$ then 
$$|A|_{M_1}(x) \leq C(\varepsilon_0)$$
for all $x\in M_1\cap C_1(0)$. Furthermore, a compactness argument
implies that $C(\varepsilon_0) \rightarrow 0$ as $\varepsilon
\rightarrow 0$. This implies that we can choose
$0<\varepsilon_1<\varepsilon_0$ such that if the Gaussian density
ratios up to scale $1$ centered at $(x,t) \in B_2(0)\times [0,1]$ are bounded by $1+\varepsilon_1$  and 
$$M_1\cap C_1(0) \subset C_1\cap \{|\tilde{y}|\leq \eta/2\}\ , $$
then $M_1 \cap C_1(0)$ is a graph over $B^n_1(0)$ with Lipschitz
constant bounded above by $\eta$. 

Now assume that $y\in C_2(0)\cap \{|\hat{y}|\leq \eta/2\}$. We then
have for $R\geq 2, r\leq 1$ that
\begin{equation}\label{eq:gl.1}
  \begin{split}
    \Theta_0(y,r) &= \int\limits_{M_0\cap C_R}\frac{1}{(4\pi
      r^2)^{n/2}}e^{-\frac{|x-y|^2}{4r^2}}\, d\H^n(x)\\
&\ \ \  +  \int\limits_{M_0\setminus C_R}\frac{1}{(4\pi
      r^2)^{n/2}}e^{-\frac{|x-y|^2}{4r^2}}\, d\H^n(x)\\
&\leq \int\limits_{B^n_R(0)}\frac{1}{(4\pi
      r^2)^{n/2}}e^{-\frac{|\hat{x}-\hat{y}|^2}{4r^2}}\Big(\det(\, \pmb{1}+Du^\top\circ
    Du)\Big)^{1/2}\, d\hat{x}^n \\ 
&\ \ \ + e^{-\frac{(R-3)^2}{8r^2}} \int\limits_{M_0\setminus C_R}\frac{1}{(4\pi
      r^2)^{n/2}}e^{-\frac{|x-y|^2}{8r^2}}\, d\H^n(x)\\
&\leq (1+\varepsilon^2)^{n/2} + C e^{-\frac{(R-3)^2}{8r^2}} \leq 1 +
  \varepsilon_1\ ,
  \end{split}
\end{equation}
provided $\varepsilon\leq \varepsilon_2$ and $R\geq R_0\geq 3$. 

By assumption we have that $\sup_{B^n_R(0)}|u|\leq \varepsilon R$. Let
us assume that $\varepsilon$ is small enough, depending on $R$, such that
\begin{equation}
  \label{eq:gl.2}
  \varepsilon R \leq \eta/4\ . 
\end{equation}
Now let $y \in C_2(0)\setminus \{|\tilde{y}|\leq \eta/2\}$. We can then
estimate 
\begin{equation*}
  \begin{split}
    \Theta_0(y,r) &= \int\limits_{M_0\cap C_R}\frac{1}{(4\pi
      r^2)^{n/2}}e^{-\frac{|x-y|^2}{4r^2}}\, d\H^n(x)\\
&\ \ \  +  \int\limits_{M_0\setminus C_R}\frac{1}{(4\pi
      r^2)^{n/2}}e^{-\frac{|x-y|^2}{4r^2}}\, d\H^n(x)\\
&\leq \int\limits_{B^n_R(0)}\!\!\!\frac{1}{(4\pi
      r^2)^{n/2}}e^{-\frac{|u(\hat{x})-\tilde{y}|^2+ |\hat{x}-\hat{y}|^2}{4r^2}}\Big(\det(\, \pmb{1}+Du^\top\circ
    Du)\Big)^{1/2} d\hat{x}^n\\
&\ \ \ + e^{-\frac{(R-3)^2}{8r^2}} \int\limits_{M_0\setminus C_R}\frac{1}{(4\pi
      r^2)^{n/2}}e^{-\frac{|x-y|^2}{8r^2}}\, d\H^n(x)\\
&\leq e^{-\frac{\eta^2}{64 r^2}} (1+\varepsilon^2)^{n/2} + C
  e^{-\frac{(R-3)^2}{8r^2}}
  \end{split}
\end{equation*}
Now we choose $R=R_1\geq R_0$ such that 
$$C e^{-\frac{(R_1-2)^2}{8}} < 1- e^{-\frac{\eta^2}{128}}\ .$$
Then choose $\varepsilon \leq \min\{\varepsilon_2, 4^{-1} \eta
R_1^{-1}\}$ such that
$$e^{-\frac{\eta^2}{64}}(1+\varepsilon^2)^{n/2} <
e^{-\frac{\eta^2}{128}} \ .$$
Note that the first assumption on $\varepsilon$ implies that
\eqref{eq:gl.2} is satisfied. We see that
that $\Theta_0(y,r) <1$ for all $y\in C_2\setminus \{|\hat{y}|\leq
\eta/2\}$. Using the monotonicity of the Gaussian density ratios this
yields that
$$M_t\cap C_2(0) \subset C_2(0)\cap \{|\hat{y}|\leq
\eta/2\}\ .$$
for all $t\in [0,1]$ and by the choice of $\varepsilon$ and estimate \eqref{eq:gl.1} that 
$$\Theta(x,t,r) \leq 1 +\varepsilon_1$$
for all $(x,t) \in C_2(0)\times [0,1]$ and scales $r$ up to one. By
the choice of $\varepsilon_1$ this gives that $M_1\cap C_1(0)$ is a smooth
graph over $B^n_1(0)$ with Lipschitz constant bounded above by
$\eta$. 

We want to show that this is also true for $M_t\cap C_1(0)$ for any
$0<t<1$. Pick $t_0 \in (0,1)$ and let $\lambda = t_0^{-1/2}$. Let
$(M^\lambda_t)_{0\leq t \leq \lambda^2T}$ be the flow, parabolically
rescaled by $\lambda$. Note
that for any $x_0\in M^\lambda_0\cap C_{(\lambda-1) R_1}(0)$ we can
shift $x_0$ to $0$ and see that our previous assumptions are satisfied
for this flow. That yields that $M^\lambda_1\cap C_1(x_0)$ is a smooth
graph over $B^n_1(\hat{x}_0)$ with Lipschitz constant bounded above by
$\eta$. Note that this property is scaling invariant. Scaling back
this implies that $M_{t_0}\cap C_{(1-t_0^{1/2})R_1+t_0^{1/2}}(0)$ is a
graph over $B^n_{(1-t_0^{1/2})R_1+t_0^{1/2}}(0)$ with Lipschitz constant
less than $\eta$. Since $(1-t_0^{1/2})R_1+t^{1/2}\geq 2 - t_0^{1/2} \geq
1$ this implies the statement. 

If $T<1$ we can first rescale the flow by a factor $\lambda =
T^{-1/2}$ as above and then scale back to get the result for $0<t<T$.
\end{proof}

\section{Appendix}
We derive some useful technical results.  In what follows $\sigma_0$ is
a regular network with $(\sigma_t)_{0\leq t<T}$  being a  regular solution for network
flow. Moreover,  $\chi$ is a fixed regular network in  $\R^2$ and $\varepsilon_0$ is a universal constant less than $1/2$.
\begin{lemm}\label{shortime}
  Fix $\varepsilon_0$ and $\alpha$. There exist
  $\varepsilon=\varepsilon(\chi,\varepsilon_0, \alpha)$ and
  $q_1=q_1(\chi, \varepsilon_0, \alpha)$ so that, for every $R\geq 2$, if $\sigma_0$ is
  $\varepsilon$-close to $\chi$ in $C^{1,\alpha}(B_{R}(0)),$ then for
  every $r^2,t\leq q_1$ and $y\in B_{R-1}(0)$
  $$\Theta_{t}(y,r)\leq 3/2+\varepsilon_0.$$
\end{lemm}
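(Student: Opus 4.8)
The plan is to reduce the statement to a bound at the initial time via Huisken's monotonicity formula, and then to establish that time-zero bound by a blow-up/compactness argument that exploits the $C^{1,\alpha}$-closeness to the fixed network $\chi$.

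First I would apply Huisken's monotonicity formula, which for the network flow is the special case $f\equiv 1$ of Lemma \ref{evol}(i): it gives $\frac{d}{dt}\int_{\sigma_t}\rho_{x_0,t_0}\,d\H^1\le 0$. Taking the center $(y,t+r^2)$ yields $\int_{\sigma_t}\rho_{y,t+r^2}(\cdot,t)\,d\H^1\le \int_{\sigma_0}\rho_{y,t+r^2}(\cdot,0)\,d\H^1$, which in the notation of the lemma reads $\Theta_t(y,r)\le\Theta_0(y,\sqrt{t+r^2})$. Since $t,r^2\le q_1$ the new scale obeys $t+r^2\le 2q_1$, so it suffices to prove the purely time-zero bound $\int_{\sigma_0}\Phi(y,\rho^2)\,d\H^1\le 3/2+\varepsilon_0$ for all $y\in B_{R-1}(0)$ and all $\rho^2\le 2q_1$. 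This is exactly the reduction used in the proof of Lemma \ref{density1}, and it eliminates the time variable.

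Next I would split $\Theta_0(y,\rho)=\int_{\sigma_0\cap B_R}\Phi(y,\rho^2)\,d\H^1+\int_{\sigma_0\setminus B_R}\Phi(y,\rho^2)\,d\H^1$. For $y\in B_{R-1}(0)$ every point of $\sigma_0\setminus B_R$ lies at distance at least $1$ from $y$, so using the length-ratio bound of the regular network $\sigma_0$ together with Gaussian decay one controls the tail by $C\rho^{-1}e^{-1/(8\rho^2)}$, exactly as in the proof of Lemma \ref{density1}; choosing $q_1$ small makes this at most $\varepsilon_0/2$. It then remains to bound the interior contribution by $3/2+\varepsilon_0/2$, and I would do this by contradiction. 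If no admissible $\varepsilon,q_1$ existed, there would be networks $\sigma_0^j$ that are $\varepsilon_j$-close to $\chi$ in $C^{1,\alpha}(B_R)$ with $\varepsilon_j\to0$, points $y_j\in\bar B_{R-1}(0)$, and scales $\rho_j\to0$ with $\int_{\sigma_0^j\cap B_R}\Phi(y_j,\rho_j^2)\,d\H^1>3/2+\varepsilon_0/2$. By scale invariance of the Gaussian density ratio this says $\int_{\hat\sigma^j}\Phi(0,1)\,d\H^1>3/2+\varepsilon_0/2$ for the rescalings $\hat\sigma^j=\rho_j^{-1}\big((\sigma_0^j\cap B_R)-y_j\big)$. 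The crucial point is that closeness to the fixed smooth $\chi$ gives a uniform bound $\|\sigma_0^j\|_{C^{1,\alpha}(B_R)}\le\|\chi\|_{C^{1,\alpha}(B_R)}+\varepsilon_j\le C(\chi)$; since the tangent direction is scale invariant while its Hölder seminorm rescales by $\rho_j^\alpha\to0$, the $\hat\sigma^j$ have asymptotically constant tangents on each branch and converge, with multiplicity one by embeddedness, in $C^{1,\beta}_\l$ (any $\beta<\alpha$) to a cone $\sigma_\infty$. Passing $y_j\to y_\infty$ and distinguishing whether $y_\infty$ is a triple junction of $\chi$ and how $\rho_j$ compares to the distance from $y_j$ to the nearest junction, $\sigma_\infty$ is either a single line or a standard triod (with vertex at the origin or elsewhere). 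In every case $\int_{\sigma_\infty}\Phi(0,1)\,d\H^1\le 3/2$: a line has density $1$, and the standard triod, being a static self-shrinker, has Gaussian density maximized at its vertex with value $3/2$ (the mechanism underlying Lemma \ref{thm:densitybound}). This contradicts the strict lower bound and establishes the interior estimate.

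The main obstacle is this interior blow-up analysis: forcing every blow-up limit to be a line or triod of density at most $3/2$, regardless of how the rescaling scale $\rho_j$ interacts with both the closeness scale $\varepsilon_j$ and the distance from $y_j$ to the nearest triple junction. Making the conclusion uniform over all $R\ge2$, so that $q_1$ depends only on $\chi$ and $\alpha$, relies on $\chi$ having finitely many triple junctions and bounded curvature on its ends, so that its local model at every point — and hence the admissible blow-up limits — are uniformly controlled. The strict gap $\varepsilon_0$ between $3/2$ and the triod density is precisely what upgrades the limiting equality into the desired open estimate.
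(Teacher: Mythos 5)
Your proposal is correct and follows essentially the same route as the paper: reduce to time zero via Huisken's monotonicity formula, then blow up at the scale $\lambda=\sqrt{t+r^2}\to 0$ and use the $C^{1,\alpha}$-closeness to the fixed regular network $\chi$ (whose H\"older seminorm of the tangent rescales by $\lambda^{\alpha}\to 0$) to force the limit to be a multiplicity-one line or standard triod, whose Gaussian density is at most $3/2$, contradicting the assumed strict bound. Your explicit interior/tail splitting with the length-ratio bound is a slightly more careful treatment of a step the paper leaves implicit in its varifold-convergence claim, but it is not a different argument.
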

\begin{proof}
We argue by contradiction. Suppose there are  sequences $(\varepsilon_i)_{i\in\N}$, $(r_i)_{i\in \N}$, $(t_i)_{i\in\N}$ all converging to zero, $(R_i)_{i\in N}$ with $R_i\geq 2$ for all $i$, $(y_i)_{i\in\N}$ with $y_i \in B_{R_i-1}(0)$,  and  $(\sigma_t^i)_{0\leq t \leq t_i}$ a sequence of regular solutions to  network flow for which $\sigma_0^i$  is $\varepsilon_i$-close to $\chi$ in $C^{1,\alpha}(B_{R_i}(0))$ and
$$\Theta_{t_i}(y_i,r_i)> 3/2+\varepsilon_0.$$
The fact that $\sigma_i$ is $\varepsilon_i$-close to $\chi$ in $C^{1,\alpha}(B_{R_i}(0))$ means that
\begin{itemize}
\item[a)] there are functions $u_i$ defined on $C^0(\chi\cap B_{R_i}(0))$ which are $C^{1,\alpha}$ when restricted to each branch and the $C^{1,\alpha}$ norm on each branch converges to zero;
\item[b)] there are unit vectors $N_i$ defined on $\chi\cap B_{R_i}(0)$  such that $\langle N_i,\nu\rangle$ is a smooth function on each branch that converges uniformly to one, where $\nu$ denotes a unit normal vector on the respective branch;
\item[c)]  
$$\sigma_0^i\cap B_{R_i}(0)=\chi\cap B_{R_i}(0)+u_i N_i.$$
\end{itemize}
Set $\lambda_i=\sqrt{r_i^2+t_i}$. It is simple to recognize that we can find functions $v_i$ defined on $\lambda_i^{-1}\chi$ such that 
$$\lambda_i^{-1}\sigma_0^i\cap B_{R_i \lambda_i^{-1}}(0)=\lambda_i^{-1}\chi\cap B_{R_i \lambda_i^{-1}}(0)+v_iN_i.$$
Because the $C^{0,\alpha}$ norm of the first derivatives of $v_i$ converges uniformly to zero on each branch of $\lambda_i^{-1}\chi\cap B_{R_i \lambda_i^{-1}}(0)$ we obtain that if the limit of $\lambda_i^{-1}(\sigma_0^i-y_i)$ in the varifold sense is not empty, then it must be either a line or three half-lines meeting at a common point. In any case we have
$$\lim_i \int_{\lambda_i^{-1}(\sigma_0^i-y_i)} (4\pi)^{-1/2}\exp(-|x|^2/4)d\H^1\leq 3/2.$$
This contradicts the fact that, using the monotonicity formula, 
\begin{equation*}\begin{split}
 3/2+\varepsilon_0 &<\Theta_{t_i}(y_i,r_i)\leq \Theta_{0}(y_i,\lambda_i)\\
 &=\int_{\lambda_i^{-1}(\sigma_0^i-y_i)} (4\pi)^{-1/2}\exp(-|x|^2/4)d\H^1.
 \end{split}\end{equation*}
\end{proof}
Assume that $\sigma$ is non-compact, asymptotic to half-lines at infinity, and  contains no closed loops.
\begin{lemm}\label{junction}
  Fix $\varepsilon_0$, $R>2$, and $\tau$. There is $\eta=\eta(\tau)$ such that if
  $$\Theta(x,r)\leq 3/2+\varepsilon_0<2$$
  for every $x$ in $B_{R}(0)$ and $r^2\leq \tau$, then the distance
  between any two triple junctions in $B_{R}(0)$ is greater than $\eta$.
\end{lemm}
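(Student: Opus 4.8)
The plan is to argue by contradiction through a blow-up at the scale of the putative collision. Suppose the assertion fails for the fixed $\tau$ (and the fixed $\varepsilon_0,R$): then for each $i$ there is a regular network $\sigma_i$ satisfying $\Theta(x,r)\le 3/2+\varepsilon_0<2$ on $B_R(0)$ for all $r^2\le\tau$, together with two triple junctions $p_i,q_i\in B_R(0)$ whose distance $d_i:=|p_i-q_i|$ tends to zero. I rescale, setting $\tilde\sigma_i:=(\sigma_i-p_i)/d_i$, so that $\tilde\sigma_i$ carries a triple junction at the origin and a second one at $\xi_i:=(q_i-p_i)/d_i$ with $|\xi_i|=1$; after passing to a subsequence, $\xi_i\to\xi$ with $|\xi|=1$. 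By the scaling behaviour of the Gaussian integral one has $\tilde\Theta_i(x,r)=\Theta_{\sigma_i}(d_ix+p_i,\,d_ir)$, so the hypothesis transfers to the bound $\tilde\Theta_i(x,r)\le 3/2+\varepsilon_0$, valid on every fixed ball (for $i$ large) and at every scale with $r^2\le\tau/d_i^2$. Since $d_i\to 0$, this bound holds at \emph{all} scales in the limit, which is the crucial feature the argument exploits; the dependence $\eta=\eta(\tau)$ is exactly what allows $\tau/d_i^2\to\infty$.

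First I would note that $\Theta\le 3/2+\varepsilon_0$ forces a uniform upper length-ratio bound, since $\H^1(\sigma\cap B_r(x))\le C\,r\,\Theta(x,r)$ by comparing the Gaussian weight with its minimum on $B_r(x)$. Hence the $\tilde\sigma_i$ have locally bounded mass and, by compactness of integral $1$-varifolds, a subsequence converges to a limit $V_\infty$ on $\R^2$ whose Gaussian density ratios are $\le 3/2+\varepsilon_0<2$ at every centre and scale. To identify $V_\infty$ as an honest regular network I would invoke the regularity available under a density bound strictly below $2$: because each $\sigma_i$ contains no loops, Theorem \ref{thm:locreg.1} yields interior curvature estimates on $B_R(0)$, and these scale to zero under dilation by $d_i^{-1}\to\infty$. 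Consequently every branch of $V_\infty$ is a straight segment or ray, meeting others only at $120^\circ$ triple junctions; the junctions at $0$ and $\xi$ persist, and remain at distance $1$ so that they cannot merge, while the no-loop property passes to the limit, so each component of $V_\infty$ is a tree.

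The contradiction then comes from counting ends. A tree-like regular network whose interior vertices are all $120^\circ$ triple junctions and whose non-compact branches are rays satisfies $(\#\text{rays})=(\#\text{triple junctions})+2$ on each component; since the components carrying $0$ and $\xi$ contain at least two triple junctions in total, $V_\infty$ has at least four rays running to infinity. A single ray contributes exactly $1/2$ to the Gaussian integral at large scales while the bounded core becomes negligible, so $\Theta_{V_\infty}(0,r)\to(\#\text{rays})/2\ge 2$ as $r\to\infty$. This contradicts $\Theta_{V_\infty}(0,r)\le 3/2+\varepsilon_0<2$, and the contradiction establishes the lemma.

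The main obstacle is the middle step, namely showing that the varifold limit $V_\infty$ is a genuine regular network with straight branches and with the two triple junctions preserved, rather than a concentrated or degenerate object. This is precisely where the strict gap $3/2+\varepsilon_0<2$ is essential: it rules out every blow-up model of density $\ge 2$ (crossing lines, a multiplicity-two line, or four or more rays through a point), so that the only admissible local models are a multiplicity-one line or the standard triod, exactly as in Lemma \ref{thm:densitybound} and Theorem \ref{thm:locreg.1}. Some care is also needed to guarantee that no further triple junctions accumulate at the origin and that the convergence is strong enough (locally $C^{1,\alpha}$ away from the junctions and up to them) to preserve both the triod structure and the tree topology; I expect both to follow from the curvature estimates combined with the density gap below $2$.
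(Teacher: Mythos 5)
There is a genuine gap, and it sits exactly where you flag the ``main obstacle''. The lemma concerns a single \emph{static} network $\sigma$: the only hypotheses are the Gaussian density bound, regularity of the triple junctions, and the absence of closed loops. No equation is satisfied by $\sigma$ (it is not a self-expander, a self-shrinker, or a time-slice of a flow), so there are no curvature estimates available. Your identification of the blow-up limit $V_\infty$ as a union of straight rays rests on invoking Theorem \ref{thm:locreg.1}, but that is a parabolic interior estimate for network \emph{flows} and simply does not apply here; a Gaussian density ratio bound below $2$ at all centres and scales is perfectly compatible with unbounded curvature (a smoothed zigzag has length ratios close to $1$ at every scale). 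Consequently the rescaled networks $\tilde\sigma_i$ need not converge to anything with straight branches, the varifold limit need not retain the two triple junctions or the tree topology, and the end-counting formula $(\#\text{rays})=(\#\text{junctions})+2$ cannot be applied to $V_\infty$. The blow-up step cannot be repaired from the stated hypotheses.

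The fix is to run your ``four ends, each worth $1/2$'' count directly on $\sigma_i$ itself, without passing to a limit --- which is what the paper does. The Gaussian integral $\int_\gamma (4\pi)^{-1/2}e^{-|x|^2/4}\,d\H^1$ is the length of $\gamma$ in the conformal metric $g=(4\pi)^{-1}e^{-|x|^2/2}(dx_1^2+dx_2^2)$. If two triple junctions lie within $\eta$ of each other, rescale so that both sit in a small ball $B_{r_1}(0)$; since the network has no closed loops, the branches emanating from the two junctions yield four paths $a_1,a_2,b_1,b_2$, pairwise disjoint away from the junctions, each reaching Euclidean distance $T$ from the origin. Each such path has $g$-length at least $d_g(B_{r_1},\partial B_T)$, which is nearly $\int_0^\infty(4\pi)^{-1/2}e^{-t^2/4}\,dt=1/2$, so $\Theta\geq 7/4+\varepsilon_0/2>3/2+\varepsilon_0$ (using $\varepsilon_0<1/2$), a contradiction. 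This argument needs no compactness, no regularity of $\sigma$ beyond the tree structure, and makes the dependence $\eta=\eta(\tau)$ explicit. Your intuition about four rays forcing density $\geq 2$ is the right one; it just has to be implemented at the level of $g$-lengths of disjoint paths rather than via a blow-up limit.
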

\begin{proof}
 Choose $T>0$ so that
 $$\int_{0}^T  (4\pi)^{-1/2}\exp(-t^2/4)dt=1/2+\varepsilon_0/16-1/32 $$
 and $r_1>0$ so that
 $$\int_{0}^{r_1}  (4\pi)^{-1/2}\exp(-t^2/4)dt=1/32-\varepsilon_0/16. $$
Suppose that $\sigma$ has two triple junctions in $B_{R}(0)$ at a distance $\eta$ smaller than $\tau r_1$. Denote the midpoint between the triple junctions by $y$ and consider the network $\gamma=\tau^{-1}(\sigma-y)$ which has two triple junctions $x_1$ and $-x_1$ inside $B_{r_1}(0)$. Because $\gamma$ has no closed loops we can find  paths $a_1$, $b_1$, $a_2$, and $b_2$ contained in $\gamma$  such that $a_1, a_2$ and $b_1, b_2$ connect $x_1$  and $-x_1$ respectively to a point at a distance $T$ from the origin and  $a_1\cap a_2=\{x_1\}$, $b_1\cap b_2=\{-x_1\}$, $a_i\cap b_j=\emptyset$.  

Consider the metric $g=(4\pi)^{-1}\exp(-|x|^2/2)(dx_1^2+dx_2^2)$ on $\R^2$ and denote its distance function by $d_g$. Straight lines containing the origin are geodesics for $g$ and so  any point with $|p|=T$ has 
 $$d_g(x,p)\geq 1/2+\varepsilon_0/8-1/16\quad\mbox{for any } x\in B_{r_1}(0).$$
Thus
\begin{multline*}
 3/2+\varepsilon_0\geq \Theta(y,\tau)=\int_{\gamma}(4\pi)^{-1/2}\exp(-|x|^2/4)d\H^1\\
 \geq \sum_{i=1}^2\left(\int_{a_i}(4\pi)^{-1/2}\exp(-|x|^2/4)d\H^1+\int_{b_i}(4\pi)^{-1/2}\exp(-|x|^2/4)d\H^1\right)\\
 =\sum_{i=1}^2 \left(\int_{a_i} dl_g+\int_{b_i}dl_g\right)\geq 4(1/2+\varepsilon_0/8-1/16)=7/4+\varepsilon_0/2.
\end{multline*}
This is impossible because  $\varepsilon_0< 1/2.$
 \end{proof}

\providecommand{\bysame}{\leavevmode\hbox to3em{\hrulefill}\thinspace}
\providecommand{\MR}{\relax\ifhmode\unskip\space\fi MR }
\providecommand{\MRhref}[2]{%
  \href{http://www.ams.org/mathscinet-getitem?mr=#1}{#2}
}
\providecommand{\href}[2]{#2}


\begin{thebibliography}{10}

\bibitem{AbreschLanger}
U.~Abresch and J.~Langer, \emph{The normalized curve shortening flow and
  homothetic solutions}, J. Differential Geom. \textbf{23} (1986), no.~2,
  175--196, MR0845704.

\bibitem{BellettiniNovaga11}
G.~Bellettini and M.~Novaga, \emph{Curvature evolution of nonconvex lens-shaped
  domains}, J. Reine Angew. Math. \textbf{656} (2011), 17--46, MR2818854.

\bibitem{Brakke}
K.~Brakke, \emph{The motion of a surface by its mean curvature}, Princeton
  Univ. Press, 1978, MR0485012.

\bibitem{BrendleHuisken13}
S.~Brendle and G.~Huisken, \emph{Mean curvature flow with surgery of mean
  convex surfaces in {$\mathbb{R}^3$}}, Invent. Math. \textbf{203} (2016), no.~2,
  615--654, MR3455158.

\bibitem{ChenYin07}
B.-L. Chen and L.~Yin, \emph{Uniqueness and pseudolocality theorems of the mean
  curvature flow}, Comm. Anal. Geom. \textbf{15} (2007), no.~3, 435--490, MR2379801.

\bibitem{ColdingMinicozzi12}
T.~H. Colding and W.~P. Minicozzi, II, \emph{Generic mean curvature flow {I}:
  generic singularities}, Ann. of Math. (2) \textbf{175} (2012), no.~2,
  755--833, MR2993752.

\bibitem{Ecker04}
K.~Ecker, \emph{Regularity theory for mean curvature flow}, Progress in
  Nonlinear Differential Equations and their Applications, 57, Birkh\"auser
  Boston Inc., Boston, MA, 2004, MR2024995.

\bibitem{EckerHuisken91}
K.~Ecker and G.~Huisken, \emph{Interior estimates for hypersurfaces moving by
  mean curvature}, Invent. Math. \textbf{105} (1991), no.~3, 547--569, MR1117150.

\bibitem{GageHamilton86}
M.~E. Gage and R.~S. Hamilton, \emph{The heat equation shrinking convex plane
  curves}, J. Diff. Geom. \textbf{23} (1986), 69--96, MR0840401.

\bibitem{Grayson89}
M.~A. Grayson, \emph{The heat equation shrinks embedded plane curves to points},
  J. Diff. Geom. \textbf{26} (1987), 285--314, MR0906392.

\bibitem{HaettenschweilerDiplom07}
J.~H\"attenschweiler, \emph{{Mean curvature flow of networks with triple
  junctions in the plane, {\rm Diplomarbeit, Z\"urich: ETH Z\"urich, Department
  of Mathematics, 46 p., 2007.}}}

\bibitem{Tonegawa14a}
K.~Kasai and Y.~Tonegawa, \emph{A general regularity theory for weak mean
  curvature flow}, Calc. Var. Partial Differential Equations \textbf{50}
  (2014), no.~1-2, 1--68, MR3194675.

\bibitem{MagniMantegazzaNovaga13}
A.~Magni, C.~Mantegazza, and M.~Novaga, \emph{Motion by curvature of planar
  networks, {II}}, Ann. Sc. Norm. Super. Pisa Cl. Sci. (5) \textbf{15} (2016),
  117--144, MR3495423.

\bibitem{Mantegazza04}
C.~Mantegazza, \emph{Evolution by curvature of networks of curves in the
  plane}, Variational problems in {R}iemannian geometry, Progr. Nonlinear
  Differential Equations Appl., vol.~59, Birkh\"auser, Basel, 2004, Joint
  project with Matteo Novaga and Vincenzo Maria Tortorelli, pp.~95--109.

\bibitem{MNTNetworks}
C.~Mantegazza, M.~Novaga, and V.~Maria Tortorelli, \emph{Motion by curvature of
  planar networks}, Ann. Sc. Norm. Super. Pisa Cl. Sci. (5) \textbf{3} (2004),
  no.~2, 235--324, MR2076269.

\bibitem{MazzeoSaez07}
R.~Mazzeo and M.~S{\'a}ez, \emph{Self-similar expanding solutions for the
  planar network flow}, Analytic aspects of problems in {R}iemannian geometry:
  elliptic {PDE}s, solitons and computer imaging, S\'emin. Congr., vol.~22,
  Soc. Math. France, Paris, 2011, pp.~159--173, MR3060453.

\bibitem{Neves07}
A.~Neves, \emph{Singularities of {L}agrangian mean curvature flow:
  zero-{M}aslov class case}, Invent. Math. \textbf{168} (2007), no.~3,
  449--484, MR2299559.

\bibitem{Neves13b}
\bysame, \emph{Finite time singularities for {L}agrangian mean curvature flow},
  Ann. of Math. (2) \textbf{177} (2013), no.~3, 1029--1076, MR3034293.

\bibitem{Neves13a}
A.~Neves and G.~Tian, \emph{Translating solutions to {L}agrangian mean
  curvature flow}, Trans. Amer. Math. Soc. \textbf{365} (2013), no.~11,
  5655--5680, MR3091260.

\bibitem{Saez11}
M.~S{\'a}ez~Trumper, \emph{Uniqueness of self-similar solutions to the network
  flow in a given topological class}, Comm. Partial Differential Equations
  \textbf{36} (2011), no.~2, 185--204, MR2763337.

\bibitem{Linsen}
O.~C. Schn{\"u}rer, A.~Azouani, M.~Georgi, J.~Hell, N.~Jangle, A.~Koeller,
  T.~Marxen, S.~Ritthaler, M.~S{\'a}ez, F.~Schulze, and B.~Smith,
  \emph{Evolution of convex lens-shaped networks under the curve shortening
  flow}, Trans. Amer. Math. Soc. \textbf{363} (2011), no.~5, 2265--2294, MR2763716.

\bibitem{SchnuererSchulze_2007}
O.~C. Schn{\"u}rer and F.~Schulze, \emph{Self-similarly expanding networks to
  curve shortening flow}, Ann. Sc. Norm. Super. Pisa Cl. Sci. (5) \textbf{6}
  (2007), no.~4, 511--528, MR2394409.

\bibitem{Tonegawa14b}
Y.~Tonegawa, \emph{A second derivative {H}\"older estimate for weak mean
  curvature flow}, Adv. Calc. Var. \textbf{7} (2014), no.~1, 91--138, MR3176585.

\bibitem{TonegawaWickramasekera14}
Y.~Tonegawa and N.~Wickramasekera, \emph{The blow up method for {B}rakke flows:
  networks near triple junctions}, Arch. Ration. Mech. Anal. \textbf{221}
  (2016), no.~3, 1161--1222, MR3508999.

\bibitem{White97}
B.~White, \emph{Stratification of minimal surfaces, mean curvature flows, and
  harmonic maps}, J. reine angew. Math. \textbf{488} (1997), 1--35, MR1465365.

\bibitem{White05}
\bysame, \emph{A local regularity theorem for mean curvature flow}, Ann. of
  Math. (2) \textbf{161} (2005), no.~3, 1487--1519, MR2180405.

\end{thebibliography}
\end{document}